\newtheorem{theorem}{Theorem}[section]
\newtheorem{definition}[theorem]{Definition}
\newtheorem{lemma}[theorem]{Lemma}
\newtheorem{proposition}[theorem]{Proposition}
\begin{document}
\title{The topology of a subspace of the Legendrian curves on a closed contact 3-manifold}

\author{Ali Maalaoui$^{(1)}$ \& Vittorio Martino$^{(2)}$}
\addtocounter{footnote}{1}
\footnotetext{Department of Mathematics,
Rutgers University - Hill Center for the Mathematical Sciences
110 Frelinghuysen Rd., Piscataway 08854-8019 NJ, USA. E-mail address:
{\tt{maalaoui@math.rutgers.edu}}}
\addtocounter{footnote}{1}
\footnotetext{SISSA, International School for Advanced Studies,
via Bonomea 265, 34136 Trieste, Italy. E-mail address:
{\tt{vmartino@sissa.it}}}
\date{}
\maketitle

{\noindent\bf Abstract} {\small In this paper we study a subspace of the space of Legendrian loops and we show that the injection of this space into the full loop space is an $S^{1}$-equivariant homotopy equivalence. This space can be also seen as the space of zero Maslov index Legendrian loops and it shows up as a suitable space of variations in contact form geometry.}

\section{Introduction}

\noindent
Let $M$ be a 3-dimensional smooth closed (compact, $\partial M=\emptyset$) and orientable manifold, and let $\alpha$ be a 1-form on it. The couple $(M,\alpha)$ is said to be a contact manifold if the form $\alpha \wedge d\alpha$ is a volume form on $M$. A curve $x\in \mathcal{H}^{1}(S^{1},M)$ is said to be legendrian if its tangent vector is in the kernel of $\alpha$, that is $\alpha(\dot{x})=0$. Hence we let $\mathcal{L}_{\alpha}$ denote the space of legendrian closed curves on $M$. This space is a subset of the free loop space of $M$ denoted by $\Lambda(S^1,M)$.
Now we recall a result of Smale \cite{Smale}:\\

\noindent
{\textbf{Theorem}} (Smale).  \emph{Let $(M,\alpha)$ be a contact manifold, then the injection
$$j:\mathcal{L}_{\alpha}\hookrightarrow \Lambda(S^1,M)$$
is an $S^1$-equivariant homotopy equivalence.}\\

\noindent
In this paper we are going to prove a theorem that can be seen as related to the above theorem: the framework will be slightly  different and the space $\mathcal{L}_{\alpha}$ will be replaced by a smaller space $\mathcal{C}_{\beta}$, that appears to be convenient in some variational problems in contact form geometry (see for instance \cite{B1},\cite{B2} and \cite{B4}). We will introduce the following assumption:
$$
\begin{array}{ll}
(A) &  \mbox{there exists a smooth vector field $v\in \ker (\alpha)$ such that the dual}\\
    &  \mbox{1-form $\beta=d\alpha(v,\cdot)$ is a contact form with the same orientation than $\alpha$.}
\end{array}
$$
Under $(A)$, we renormalize $v$ onto $\lambda v$ so that $\alpha \wedge d\alpha=\beta \wedge d\beta$. \\
By Smale's theorem, we know that the injection $\mathcal{L}_{\beta}$ in $\Lambda(S^{1},M)$ is an $S^1$-equivariant homotopy equivalence. We are interested in a space that is smaller than $\mathcal{L}_{\beta}$ and it is defined in the following way:\\
Let
$$\mathcal{C}_{\beta}=\left \{ x\in \mathcal{L}_{\beta}; \alpha_{x}(\dot{x})=c>0 \right \}$$
where $c$ is a constant that varies with the curve $x$.\\
\noindent
The space $\mathcal{C}_{\beta}$ is very useful in contact geometry and it is of independent interest in differential topology.
For example, let us take the framework of $(S^3,\alpha_{0})$, the standard contact form on $S^3$, and let
$$v=-x_{4}\partial_{x_1}-x_3 \partial_{x_2}+x_2\partial_{x_3}+x_1\partial_{x_4}$$
be a Hopf fibration vector field in $\ker \alpha_{0}$. The space $\mathcal{C}_{\beta}$ can be identified as the lift to $S^3$ (according to some rules, see \cite{B1}) of the space $Imm_{0}(S^1,S^2)$ of immersed curves from $S^{1}$ into $S^2$ of Maslov index zero. Smale's theorem \cite{Smale} asserts then that the injection $\mathcal{C}_{\beta}\hookrightarrow \Lambda(S^1,S^3)$ is an $S^1$-equivariant homotopy equivalence.\\
In this paper, we extend this result to a more general framework of $(M,\alpha)$ under (A) and an additional assumption that we introduce below.
We need, in order to state this second assumption, to introduce the one-parameter group generated by $v$ that we will denote by $\varphi_s$.\\
From \cite{B1} and \cite{B4} we know that the kernel of a contact form rotates monotonically in a frame transported by $\varphi_s$ along $v$. Based on this fact we give the following definition.\\
\begin{definition}
We say that $\ker\alpha$ turns well along $v$, if starting from any $x_{0}$ in $M$, the rotation of $\ker\alpha$ along the $v$-orbit in a transported frame exceeds $\pi$.\footnote{It is in fact then infinite}\\
\end{definition}

\noindent
Our second assumption is therefore:
$$
\begin{array}{ll}
(B) &  \mbox{$\ker\alpha$ turns well along $v$}
\end{array}
$$
In this paper, we will prove the following

\begin{theorem}\label{mainteo}
Let $(M,\alpha)$ be a contact closed manifold. Then under the assumptions (A) and (B), the injection
$$\mathcal{C}_{\beta}\hookrightarrow \Lambda(S^{1},M)$$
is an $S^1$-equivariant homotopy equivalence.
\end{theorem}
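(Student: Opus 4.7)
Smale's theorem applied to the contact form $\beta$ (contact by (A), with $\alpha\wedge d\alpha=\beta\wedge d\beta$ after the rescaling) already gives that $\mathcal{L}_\beta\hookrightarrow\Lambda(S^1,M)$ is an $S^1$-equivariant homotopy equivalence, and the injection of the theorem factors as $\mathcal{C}_\beta\hookrightarrow\mathcal{L}_\beta\hookrightarrow\Lambda(S^1,M)$. The task therefore reduces to proving that $\mathcal{C}_\beta\hookrightarrow\mathcal{L}_\beta$ is itself an $S^1$-equivariant homotopy equivalence, and the plan is to construct an explicit $S^1$-equivariant deformation retraction of $\mathcal{L}_\beta$ onto $\mathcal{C}_\beta$ that uses the flow $\varphi_s$ together with the rotation property~(B).

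\textbf{Reduction to the sign condition.} Using (A), complete $v$ to a frame of $\ker\beta$ by a vector field $w$ with $\alpha(w)=1$ and $\beta(w)=0$. For $x\in\mathcal{L}_\beta$ write $\dot x=a(t)\,v(x)+b(t)\,w(x)$; then $\alpha_x(\dot x)=b(t)$. The condition $x\in\mathcal{C}_\beta$ splits into (i) $b(t)>0$ for every $t$, and (ii) $b$ constant in $t$. Given~(i), condition~(ii) is achieved by the canonical equivariant reparametrization $y=x\circ\theta_x$, where $\theta_x$ is the inverse of $t\mapsto(\int_0^1 b)^{-1}\int_0^t b$; linearly interpolating $\theta_x$ to the identity yields an $S^1$-equivariant deformation retract of the open set $\mathcal{C}'_\beta=\{x\in\mathcal{L}_\beta:b>0\}$ onto $\mathcal{C}_\beta$. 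So the remaining task is to build an equivariant deformation retraction of $\mathcal{L}_\beta$ onto $\mathcal{C}'_\beta$.

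\textbf{Use of (B) and the homotopy.} Since $\varphi_{s*}v=v$ and $\alpha(v)=0$, the pulled-back planes $\varphi_{-s*}\ker\alpha_{\varphi_s(p)}$ all contain $v(p)$ and rotate monotonically about $\mathbb{R}v(p)$; (B) guarantees this rotation exceeds $\pi$ and is in fact unbounded. Consequently, for every $p\in M$ and every $X\in T_pM\setminus\mathbb{R}v(p)$ there is a minimal $s(p,X)\ge 0$ with $(\varphi_{s(p,X)}^*\alpha)(X)>0$, depending continuously on $(p,X)$. To deform $x\in\mathcal{L}_\beta$ into $\mathcal{C}'_\beta$, I would push each point $x(t)$ along $v$ by an amount $\sigma(x,t,u)$ with $\sigma(\cdot,\cdot,0)\equiv 0$ and $\sigma(\cdot,\cdot,1)$ an appropriate smoothing of $s(x(t),\dot x(t))$, while simultaneously reparametrizing the domain by a factor $\lambda(x,t,u)$ and adding a small $w$-correction $\mu(x,t,u)$ so that the one-parameter family stays in $\mathcal{L}_\beta$ for every $u$. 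The defining identity $\beta=d\alpha(v,\cdot)$ is what makes the compensating correction algebraically solvable, and the monotonicity of the rotation in (B) is what makes it solvable continuously in $x$.

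\textbf{Main obstacle.} The hard step is the last one: the $v$-flow does \emph{not} preserve $\ker\beta$, so translating along $v$ alone immediately takes us out of $\mathcal{L}_\beta$. The reparametrization $\lambda$ and the $w$-correction $\mu$ must therefore be solved for jointly with $\sigma$, and the resulting coupled scalar ODE on $S^1$ must be shown to admit a solution that is continuous in the $\mathcal{H}^1$-topology and that depends on $x$ only through the pair $(x,\dot x)$, so that the construction is automatically $S^1$-equivariant under reparametrization of the circle. Checking this regularity and equivariance, and verifying that the endpoint at $u=1$ does in fact land in the open set $\mathcal{C}'_\beta$, is where all the technical content of the proof lies.
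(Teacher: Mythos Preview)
The broad strategy---factor through Smale for $\mathcal{L}_\beta$, use the $v$-flow under~(B) to make $\alpha(\dot x)$ positive, then reparametrize to make it constant---is indeed the paper's plan. But the heart of your proposal has a genuine gap, and it is exactly the difficulty the paper spends most of its length on.

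Your function $s(p,X)$ is defined only for $X\notin\mathbb{R}\,v(p)$. At a zero of your $b$ (this is the paper's $a$: the paper writes $\dot x=a\xi+bv$ with $\xi$ the Reeb field of $\alpha$, so your $w$ is $\xi$) the tangent $\dot x$ lies in $\mathbb{R}\,v$ and $s$ is undefined. More seriously, at a transverse zero where $b$ changes sign the minimal $s$ jumps by roughly half a rotation: on the positive side $s=0$ works, on the negative side one must wait until $\ker\alpha$ has swung past $\pi$. Hence $t\mapsto s(x(t),\dot x(t))$ is discontinuous, and no ``appropriate smoothing'' of it can both keep the family continuous in $(x,t)$ and land in $\mathcal{C}'_\beta$ at $u=1$. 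This is precisely the phenomenon the paper calls the formation of \emph{Dirac masses}: the $v$-displacement required to lift the negative pieces is discontinuous at their endpoints, and what one obtains is a curve in $\mathcal{C}_\beta^+$ carrying back-and-forth $v$-jumps there.

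Accordingly the paper does \emph{not} try to stay inside $\mathcal{L}_\beta$ during the lift; the homotopy is taken in $\Lambda(S^1,M)$. The argument then proceeds in stages you do not have: a heat-type regularization flow (Section~2) so that $a$ has only isolated zeros of finite order; the lift itself with explicit tracking of the Dirac masses as zeros of $a$ collide (Section~3.2); a delicate cancellation process (Sections~3.3--3.6) that removes each (nearly) Dirac mass by building a deformation vector $Z=\lambda\xi+\mu v+\eta[\xi,v]$ from transported solutions of a linear ODE, with Lipschitz dependence and a partition-of-unity globalization; and finally a second parabolic flow (Section~4), together with Angenent's theorem on zero sets of solutions of parabolic equations, to pass from $a\ge 0$ to $a>0$. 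You correctly flag the coupling needed to remain Legendrian as a difficulty, but the decisive obstacle is the discontinuity at sign changes of $b$, and the mechanism you sketch does not resolve it.
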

\bigskip
\noindent
Let us recall first some properties that we will be using later. Given the contact form $\alpha$, we will let $\xi$ be its Reeb vector field. Namely, $\xi$ is the unique vector satisfying
$$\alpha(\xi)=1, \qquad d\alpha(\xi,\cdot)=0$$
Therefore the following holds (see \cite {B1}):
\begin{lemma}[\cite{B1}]
Under the assumption (A), let $w$ be the Reeb vector field of the 1-form $\beta$, then there exist two functions $\tau$ and $\overline{\mu}$ such that:
$$[\xi,[\xi,v]]=-\tau v, \qquad w=-[\xi,v]+\overline{\mu}\xi$$
where $\overline{\mu}=d\alpha(v,[v,[\xi,v]])$.
\end{lemma}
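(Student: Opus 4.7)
My plan is to derive both identities by working in the candidate frame $\{v,[\xi,v],\xi\}$ of $TM$, using the Jacobi identity together with the normalization $\alpha\wedge d\alpha=\beta\wedge d\beta$. The preparatory step is to establish that this is indeed a frame. From $\alpha(\xi)=1$ and $\iota_\xi d\alpha=0$ one has $\mathcal{L}_\xi\alpha=0$, so differentiating $\alpha(v)=0$ along $\xi$ successively gives $\alpha([\xi,v])=0$ and $\alpha([\xi,[\xi,v]])=0$, placing both iterated brackets in $\ker\alpha$. Since $\beta=\iota_v d\alpha$, one also has $\beta(v)=\beta(\xi)=0$, so $\ker\beta=\mathrm{span}(v,\xi)$, and the contact condition on $\beta$ forces $d\beta(\xi,v)\neq 0$. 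A short Cartan-formula computation rewrites this quantity as $\alpha([v,[\xi,v]])$, and evaluating the normalization identity $\alpha\wedge d\alpha=\beta\wedge d\beta$ on the triple $(v,[\xi,v],\xi)$ pins it down to the value $1$. In particular $\{v,[\xi,v],\xi\}$ is a global frame and $[v,[\xi,v]]=p\,v+q\,[\xi,v]+\xi$ for some smooth functions $p,q$.

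For the first identity, I would write $[\xi,[\xi,v]]=a\,v+b\,[\xi,v]$ (with no $\xi$-component, by the preparatory step) and apply the Jacobi identity to $(v,\xi,[\xi,v])$. The term $[[\xi,v],[v,\xi]]$ vanishes, and when the two remaining double brackets are expanded in the frame, the only surviving $\xi$-component comes from the piece $b\,[v,[\xi,v]]$ inside $[v,[\xi,[\xi,v]]]$ and equals $b$ (using that the $\xi$-component of $[v,[\xi,v]]$ is normalized to $1$). Hence $b=0$, which gives $[\xi,[\xi,v]]=-\tau\,v$ with $\tau:=-a$.

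For the second identity, decompose $w=\lambda\,v+\mu\,[\xi,v]+\nu\,\xi$ in the same frame. From $\beta([\xi,v])=-\alpha([v,[\xi,v]])=-1$, the Reeb normalization $\beta(w)=1$ forces $\mu=-1$. The equation $d\beta(w,v)=0$ then yields $\nu=d\beta([\xi,v],v)$, which Cartan expands to $\beta([v,[\xi,v]])=d\alpha(v,[v,[\xi,v]])=\overline{\mu}$. Finally $d\beta(w,\xi)=0$ forces $\lambda=-d\beta([\xi,v],\xi)=\beta([\xi,[\xi,v]])$, and this vanishes thanks to the first identity together with $\beta(v)=0$. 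The delicate step of the whole argument is the Jacobi computation: one must carefully check that $[\xi,[[\xi,v],v]]$ contributes no $\xi$-component, which holds because $\mathcal{L}_\xi$ preserves $\ker\alpha$ and both $v$ and $[\xi,v]$ lie in $\ker\alpha$. Once this is observed, the rest is routine bookkeeping in the frame.
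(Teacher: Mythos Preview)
Your argument is correct and self-contained; the paper itself gives no proof of this lemma, deferring entirely to \cite{B1}, so there is nothing to compare at the level of method. Your frame computation is the standard route and all the key identities ($\beta([\xi,v])=-1$ from the normalization, the Jacobi identity to kill the $[\xi,v]$-coefficient of $[\xi,[\xi,v]]$, and the Cartan evaluations of $d\beta$ on the frame to pin down the components of $w$) check out.

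One small point of phrasing: in the ``delicate step'' you justify that $[\xi,[[\xi,v],v]]$ has no $\xi$-component by saying $\mathcal{L}_\xi$ preserves $\ker\alpha$ and $v,[\xi,v]\in\ker\alpha$. But $[[\xi,v],v]$ itself is \emph{not} in $\ker\alpha$ (its $\alpha$-value is $-1$), so this reasoning as stated does not literally apply. The correct statement is that $\mathcal{L}_\xi\alpha=0$ gives $\alpha([\xi,W])=\xi\cdot\alpha(W)$ for any $W$, and $\alpha([[\xi,v],v])=-1$ is constant; equivalently, expand $[[\xi,v],v]=-p\,v-q\,[\xi,v]-\xi$ in the frame and observe that $[\xi,\cdot]$ applied to each term lands in $\ker\alpha$ by your preparatory facts. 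This is a cosmetic fix, not a gap.
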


\noindent
Notice also that with the previous notation, the following holds:
$$\dot{x}=a\xi + bv, \quad \forall x \in \mathcal{L}_{\beta}$$
Moreover if $x$ is in $\mathcal{C}_{\beta}$ then $a$ is a positive constant. One can show (see \cite{B1}) that $\mathcal{C}_{\beta}\setminus M$ has a Hilbert manifold structure. For $x\in \mathcal{C}_{\beta}$, the tangent space at the curve $x$ is given by the set of vector fields
$$Z=\lambda \xi+\mu v +\eta w$$
with the coefficients $\lambda$, $\mu$ and $\eta$ satisfying the following equations:
\begin{equation}
\left \{
\begin{array}{ll}
\dot{\overline{\lambda + \overline{\mu} \eta}}=b \eta - \int_{0}^{1}b \eta, \\
\\
\dot{\eta}=\mu a - \lambda b
\end{array}
\right.
\end{equation}
where $\lambda$, $\mu$ and $\eta$ are 1-periodic.\\
The proof of the main theorem requires several steps. We apply first Smale's theorem to conclude that the injection $\mathcal{L}_{\beta}\hookrightarrow\Lambda(S^{1},M)$ is a homotopy equivalence. Next, we introduce an intermediate space $\mathcal{C}_{\beta}^{+}$ defined by
$$\mathcal{C}_{\beta}^{+}=\left \{x\in \mathcal{L}_{\beta}; \alpha(\dot{x})\geq 0 \right \}, $$
and we show that we can deform $\mathcal{L}_{\beta}$ to $\mathcal{C}_{\beta}^{+}$. This deformation is not continuous because ``Dirac masses'' (see below) along $v$ are created through this transformation. We will ``solve the Dirac masses'', showing how they are created along a smooth deformation in $\mathcal{L}_{\beta}$.\\
In a next and last step we ``push'' the curves of $\mathcal{C}_{\beta}^{+}$ into $\mathcal{C}_{\beta}$. This will be completed by constructing a flow that brings curves with $a\geq 0$ to curves with $a>0.$ \\

\noindent
Before going into more details, let us discuss the assumptions and let us give some examples of contact structures for which they hold.\\
Assumption (A) holds for a number of contact structures with suitable vector fields $v$ in their kernel. For instance the standard contact form on $S^3$
$$\alpha_{0} = x_2dx_1 - x_1dx_2 + x_4dx_3 - x_3dx_4$$
and also the family of contact structures on $T^3$ given by
$$\alpha_{n} = cos(2n\pi z)dx + sin(2n\pi z)dy$$
All the contact forms in the previous examples are tight, but there are also overtwisted contact forms satisfying (A). This is the case of the first non-standard 1-form on $S^3$, given by Gonzalo-Varela in \cite{GonVar}:
$$\alpha_1 =-(\cos(\frac{\pi}{4}+\pi(x_{3}^{2}+x_{4}^{2}))(x_2dx_1-x_1dx_2)+\sin(\frac{\pi}{4}+\pi(x_{3}^{2}+x_{4}^{2}))(x_4dx_3-x_3dx_4))$$
where an (explicit) existence of a suitable $v$ satisfying (A) is proved in \cite{Mart}.\\

\noindent
The assumption (B) holds also for the previous mentioned examples; moreover this assumption has a deeper meaning. In fact, it was proved in the work of Gonzalo \cite{G}, that (B) holds if and only if $\alpha$ extends to a contact circle, namely there exists another contact form $\alpha_{2}$ transverse to $\alpha$ with intersection the line spanned by $v$, such that
$$cos(s)\alpha + sin(s)\alpha_{2}$$
is a contact form for every $s\in \mathbb{R}$. \\
Let us observe that $\alpha_{1}$ defined above represents the first example of an overtwisted contact circle on a compact manifold. In fact, see \cite{G-G} for a question of Geigs and Gonzalo, where they give an example of an overtwisted contact circle on $\mathbb{R}^{3}$ and they observe that they don't know an explicit example of overtwisted contact circle on a compact closed manifold. $\alpha_{1}$ with the $v$ found in \cite{Mart} is such an example.\\
Using this criteria one can give some conditions under which (B) holds:
\begin{lemma}
Assume that (A) holds, then (B) holds if one of the following conditions is satisfied:
$$
\begin{array}{ll}
(i) & |\overline{\mu}|< 2\\
\\
(ii) &  \mbox{there exists a map $u$ on M such that $\overline{\mu} = u_{v}$}
\end{array}
$$
Moreover, if $\overline{\mu}= 0$ then $\alpha$ is tight.
\end{lemma}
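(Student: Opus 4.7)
The plan is to invoke the criterion of Gonzalo recalled just before the statement: (B) is equivalent to $\alpha$ extending to a contact circle, so it suffices to find a $1$-form $\alpha_{2}$ with $\alpha_{2}(v)=0$ such that $\alpha_{s}:=\cos(s)\alpha+\sin(s)\alpha_{2}$ is contact for all $s\in\mathbb{R}$. Every computation is carried out in the frame $(\xi,v,w)$ supplied by the first lemma, where
$$\alpha(\xi)=1,\ \alpha(v)=0,\ \alpha(w)=\overline{\mu},\quad \beta(\xi)=\beta(v)=0,\ \beta(w)=1,$$
together with $d\alpha(\xi,\cdot)=0$, $d\beta(w,\cdot)=0$ and the normalization $\alpha\wedge d\alpha=\beta\wedge d\beta$. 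A direct expansion yields the two identities
$$(\alpha\wedge d\alpha)(\xi,v,w)=1,\qquad (\alpha\wedge d\beta+\beta\wedge d\alpha)(\xi,v,w)=\overline{\mu},$$
which are essentially all one needs.

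For (i), I take $\alpha_{2}=\beta$. Expanding $\alpha_{s}\wedge d\alpha_{s}$ and using the normalization to collapse the $\cos^{2}s$ and $\sin^{2}s$ contributions into a single $\alpha\wedge d\alpha$, evaluation on $(\xi,v,w)$ gives
$$\alpha_{s}\wedge d\alpha_{s}(\xi,v,w)=1+\tfrac{1}{2}\sin(2s)\,\overline{\mu},$$
which is positive for every $s$ precisely when $|\overline{\mu}|<2$. For (ii), I try $\alpha_{2}=\beta+g\,\alpha$ with $g\colon M\to\mathbb{R}$ to be chosen; the horizontality $\alpha_{2}(v)=0$ is automatically preserved. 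An analogous (longer) expansion reduces the positivity of $\alpha_{s}\wedge d\alpha_{s}$ for every $s$ to the single scalar condition
$$g_{v}<1-\frac{\overline{\mu}^{2}}{4}\qquad\text{pointwise on }M.$$
The hypothesis $\overline{\mu}=u_{v}$ is exactly the integrability statement that makes this inequality globally solvable: one constructs $g$ from the primitive $u$ (for instance a quadratic correction in $u$ together with an orbit-average adjustment) so that $g_{v}$ is pointwise dominated by $1-u_{v}^{2}/4$.

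For the last assertion, $\overline{\mu}=0$ forces the cross term above to vanish identically, so together with $\alpha\wedge d\alpha=\beta\wedge d\beta$ the pair $(\alpha,\beta)$ forms a \emph{taut} contact circle; the rigidity of taut contact circles on closed $3$-manifolds (Geiges--Gonzalo) then forces $\alpha$ to be tight. The genuinely hard step is (ii): producing a function $g$ satisfying $g_{v}<1-\overline{\mu}^{2}/4$ on the whole of $M$. On closed $v$-orbits this inequality imposes a mean-value obstruction (the integral of the right-hand side must be positive), which is precisely what the existence of a primitive $u$ of $\overline{\mu}$ along $v$ lets one resolve.
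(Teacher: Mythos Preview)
Your treatment of (i) and of the final tightness assertion is essentially the paper's argument (up to a harmless sign in the cross term, which does not matter since the hypothesis is $|\overline{\mu}|<2$).

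For (ii), however, there is a genuine gap. Your ansatz $\alpha_{2}=\beta+g\,\alpha$ does lead, after the computation you indicate, to the pointwise requirement
\[
g_{v}\;<\;1-\frac{\overline{\mu}^{2}}{4}\,,
\]
but your claim that the hypothesis $\overline{\mu}=u_{v}$ ``is exactly the integrability statement that makes this inequality globally solvable'' is not correct. On any closed $v$-orbit $\gamma$ of period $T$, integrating the inequality forces
\[
0=\int_{\gamma} g_{v}\;<\;\int_{\gamma}\Bigl(1-\tfrac{1}{4}\overline{\mu}^{2}\Bigr)=T-\tfrac{1}{4}\int_{\gamma}\overline{\mu}^{2}\,,
\]
i.e.\ the $L^{2}$-average of $\overline{\mu}$ along $\gamma$ must be strictly below $2$. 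The existence of a primitive $u$ only gives $\int_{\gamma}\overline{\mu}=\int_{\gamma}u_{v}=0$; it says nothing about $\int_{\gamma}\overline{\mu}^{2}$, which can be arbitrarily large. So with your choice of $\alpha_{2}$ the problem is in general obstructed, and no ``quadratic correction in $u$ with an orbit-average adjustment'' can repair this.

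The paper avoids this difficulty by a different (and simpler) ansatz: it takes $\alpha_{2}=e^{u}\beta$. Then
\[
\alpha_{s}\wedge d\alpha_{s}(\xi,v,[\xi,v])=\cos^{2}s+e^{2u}\sin^{2}s+e^{u}\sin s\cos s\,(u_{v}-\overline{\mu})\,,
\]
and the hypothesis $\overline{\mu}=u_{v}$ kills the cross term outright, leaving a manifestly positive expression. The point is that the conformal rescaling of $\beta$ produces exactly the term $u_{v}$ needed to cancel $\overline{\mu}$, whereas your additive modification $\beta+g\alpha$ cannot achieve this cancellation and instead converts the problem into a differential inequality that need not be solvable.
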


\begin{proof}
We use the characterization stated above for contact circles.\\
Let $s$ be a real number, and consider the 1-form
$$\alpha_{s} = \cos(s)\alpha + \sin(s)\beta;$$
then
$$\alpha_{s} \wedge d\alpha_{s} = \cos^{2}(s)\alpha \wedge d\alpha + \sin^{2}(s)\beta\wedge d\beta + \cos(s)\sin(s)(\alpha \wedge d\beta + \beta \wedge d\alpha)$$
Notice now, (see \cite{B1}), that $\alpha \wedge d\beta(\xi, v,w) = -\overline{\mu}$, thus we have
$$\alpha_{s}\wedge d\alpha_{s}(\xi, v,w) = 1 - \frac{\sin(2s)}{2}\overline{\mu}$$
and the conclusion follows for $(i)$.\\
For $(ii)$ we consider
$$\alpha_{s} = \cos(s)\alpha + \sin(s)e^{u}\beta$$
and the same computation yields
$$\alpha_{s}\wedge d\alpha_{s} = \cos^{2}(s)\alpha\wedge d\alpha+e^{2u}\sin^{2}(s)\beta\wedge d\beta+\sin(s)\cos(s)e^{u}(\alpha\wedge d\beta+\alpha\wedge du\wedge \beta)$$
Evaluating at $(\xi, v, [\xi, v])$ we get:
$$\alpha_{s}\wedge d\alpha_{s} = \cos^{2}(s) + e^{2u}\sin^{2}(s) + e^{u}\sin(s)\cos(s)(u_{v}-\overline{\mu})$$
therefore $(ii)$ follows.\\
Now notice that if $\overline{\mu} = 0$ then we have what it is called a taut contact circle (in fact
we have a Cartan structure), therefore based on the result of Geigs-Gonzalo \cite{G-G}, we have that $\alpha$ and $\beta$ are tight.
\end{proof}

\noindent
\textbf{Acknowledgement} This paper is part of the first author's PhD thesis
work under the supervision of A.Bahri, and he would like to thank him for the fruitful discussions and advices that led to the results of this paper.\\
Moreover part of this paper was completed during the year that the second author spent at the Mathematics Department of Rutgers University: the author wishes to express his gratitude for the hospitality and he is grateful to the Nonlinear Analysis Center for its support.

\section{Regularization}
Since we are considering curves in $\mathcal{L}_{\beta}$ that we want to lift to $C_{\beta}$, the first difficulty that we will face are the degenerate curves, namely curves that are not generic in the sense that the components of the tangent vectors can have bad behavior. Therefore, in this section we want to regularize the curves starting from a compact set of $\mathcal{L}_{\beta}$. This regularization will be done by the use of a flow on the curves that induces a heat flow on the components of the tangent vector making them smooth and having isolated zeros.\\
The flow will be constructed on the tangent as a heat flow, but there is no guaranty that the flow is indeed a flow on curves. For that we will first approximate the deformation vector with a smooth one for which we know the local existence. Then we will show that when our approximation tends to the original flow, the maximal time of existence is bounded from below independently of the approximation. These statements will be made precise and clear in what follows.\\

\noindent
Here we want to construct a flow on $\mathcal{L}_{\beta}$ that deforms compact sets of curves $y$ of $\mathcal{L}_{\beta}$ into curves $x$ with $\dot{x}=a\xi+bv$, where $a$ and $b$ are smooth and have zeros of finite order. This will be used in our proof below. In fact what is needed is just the fact that $a$ has zeros of finite order but here we will prove the stronger result involving the preservation of the number of zeros of $b$ along the deformation.\\
A first idea is to consider the flow defined by the vector field
$$Z=(\dot{a}+f)\xi + (\dot{b}+g )v +\eta [\xi,v]$$
where $\eta=\eta(a,b)$ satisfies the following differential equation: $$\dot{\eta}=\overline{\mu}b\eta+\dot{b}a-\dot{a}b+ga-fb.$$
The vector field $Z$ constructed in this way will generate a diffusion flow on $a$ and $b$ as follow:
\begin{equation}\label{lim}
\left\{ \begin{array}{ll}
\displaystyle\frac{\partial a}{\partial s}=\ddot{a}+\dot{f}-b\eta \\
\\
\displaystyle\frac{\partial b}{\partial s}=\ddot{b}+\dot{g}+\eta(a\tau-\overline{\mu}_{\xi} b)
\end{array}
\right.
\end{equation}
To ensure the periodicity of $\eta$, we set $f=-\kappa b$ and $g=\kappa a$ with $\kappa=\kappa(a,b)$ satisfying $$\int_{0}^{1}e^{-\int_{0}^{r}b(u,s)\overline{\mu}(u)du}(a\dot{b}-\dot{a}b)(r,s)dr+ \kappa\int_{0}^{1}e^{-\int_{0}^{r}b(u,s)\overline{\mu}(u)du}(a^{2}+b^{2})(r,s)dr=0$$
The problem with this first attempt is that the previous system depends on the curve and we do not know so far how this vector $Z$ acts on the curve and if it defines indeed a flow on $\mathcal{L}_{\beta}$. We will follow the same technique as in \cite{B4}, to prove that indeed we have a flow on the curves that gives rise to the system defined above. Hence a first step consists of regularizing $a$ and $b$ by using a mollifier $\phi_{\varepsilon}$ and we use the classical Cauchy-Lipschitz theorem for the flow defined by the approximated vector field $Z_{\varepsilon}$. The second part consists of showing the convergence to the aimed system as $\varepsilon$ converges to zero.

\subsection{Approximated Flow}

\noindent
We consider the regularizing operator $\phi_{\varepsilon}:\mathcal{H}^{1}(S^{1}) \longrightarrow \mathcal{H}^{2}(S^{1})$ such that for $f\in \mathcal{H}^{1}(S^{1})$, $\phi_{\varepsilon}(f)$ satisfies the following equation:
\begin{equation}\label{trans}
-\varepsilon\ddot{\phi}_{\varepsilon}(f)+\phi_{\varepsilon}(f)=f.
\end{equation}
Notice that in terms of Fourier coefficients this corresponds to divide by $1+\varepsilon k^{2}$.

\subsubsection{Preliminary estimates}

\noindent
We have the following estimates for the operator $\phi_{\varepsilon}$.
\begin{lemma}
Let $f\in \mathcal{H}^{1}(S^{1})$, then there exists $C>0$ such that\\

(i) $\|\phi_{\varepsilon}(f)\|_{\mathcal{H}^{1}}\leq C \|f\|_{\mathcal{H}^{1}}$\\

(ii) $\|\dot{\phi}_{\varepsilon}(f)\|_{\mathcal{H}^{1}}^{2}\leq \frac{C}{\varepsilon}\|f\|_{\mathcal{H}^{1}}^{2}$\\

(iii) $\|\phi_{\varepsilon}(f)-f\|_{L^{2}}^{2}\leq C\varepsilon \|f\|_{\mathcal{H}^{1}}^{2}$\\

\end{lemma}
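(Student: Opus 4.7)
My plan is to treat all three estimates as elementary energy identities for the defining ODE. Writing $u:=\phi_{\varepsilon}(f)$, equation (\ref{trans}) reads $-\varepsilon\ddot u+u=f$ on $S^{1}$; since $\phi_{\varepsilon}$ is a Fourier multiplier (given by $(1+\varepsilon k^{2})^{-1}$) it commutes with $\partial_{t}$, so I may freely differentiate the equation. The three bounds will come from testing, respectively, against $u$, against $\dot u$, and against $u-f$.

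For (i), testing the equation against $u$ and integrating by parts on $S^{1}$ yields $\varepsilon\|\dot u\|_{L^{2}}^{2}+\|u\|_{L^{2}}^{2}=\int uf\le\|u\|_{L^{2}}\|f\|_{L^{2}}$, which already forces $\|u\|_{L^{2}}\le\|f\|_{L^{2}}$. Differentiating the equation once and testing against $\dot u$ gives the analogous inequality $\|\dot u\|_{L^{2}}\le\|\dot f\|_{L^{2}}$; adding the two squared bounds produces (i) with $C=1$.

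For (iii), I would test instead against $u-f$. Since $u-f=\varepsilon\ddot u$, the left-hand side collects to $\|u-f\|_{L^{2}}^{2}$, while one integration by parts on the right converts the other term to $-\varepsilon\|\dot u\|_{L^{2}}^{2}+\varepsilon\int\dot u\,\dot f$; Young's inequality absorbs the cross term and leaves $\|u-f\|_{L^{2}}^{2}\le\tfrac{\varepsilon}{2}\|\dot f\|_{L^{2}}^{2}$, which is (iii). Finally (ii) is obtained by using the equation to trade two derivatives against a factor $\varepsilon^{-1}$: from $\ddot u=\varepsilon^{-1}(u-f)$ together with (iii) I get $\|\ddot u\|_{L^{2}}^{2}\le C\varepsilon^{-1}\|f\|_{\mathcal{H}^{1}}^{2}$, and combining with $\|\dot u\|_{L^{2}}\le\|f\|_{\mathcal{H}^{1}}$ from (i) yields the required bound $\|\dot u\|_{\mathcal{H}^{1}}^{2}\le C\varepsilon^{-1}\|f\|_{\mathcal{H}^{1}}^{2}$, after absorbing the $L^{2}$ piece at the cost of enlarging $C$ (which is harmless for the regime $\varepsilon\le 1$ we care about).

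No genuine obstacle arises here: these are standard elliptic regularity estimates for a constant-coefficient ODE on the circle, and one could equivalently read them off the multiplier $(1+\varepsilon k^{2})^{-1}$ directly, using $(1+\varepsilon k^{2})^{-1}\le 1$ for (i), $\bigl(\varepsilon k^{2}/(1+\varepsilon k^{2})\bigr)^{2}\le \varepsilon(1+k^{2})$ for (iii), and $k^{2}/(1+\varepsilon k^{2})^{2}\le (4\varepsilon)^{-1}$ for (ii). I prefer the energy form because the same manipulation adapts verbatim when $\phi_{\varepsilon}$ is later replaced by a curve-dependent smoothing in the approximation argument.
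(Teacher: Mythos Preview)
Your proof is correct and follows essentially the same energy-method approach as the paper: test the defining equation against $u$ for (i), against $u-f$ for (iii), and use that $\phi_\varepsilon$ commutes with $\partial_t$. The only minor difference is in (ii), where the paper reads $\varepsilon\|\dot u\|_{L^2}^2\le C\|f\|_{L^2}^2$ directly off the same identity used in (i) and then applies it to $\dot f$, which avoids your $\varepsilon\le 1$ caveat; your route via $\ddot u=\varepsilon^{-1}(u-f)$ and (iii) is equally valid.
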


\begin{proof}
(i) As it was defined $\phi_{\varepsilon}(f)$ satisfies
$$-\varepsilon \ddot{\phi}_{\varepsilon}(f)+\phi_{\varepsilon}(f)=f$$
So if we multiply the previous equation by $\phi_{\varepsilon}(f)$ we have
$$\|\phi_{\varepsilon}(f)\|_{L^{2}}^{2}\leq \|f\|_{L^{2}}\|\phi_{\varepsilon}(f)\|_{L^{2}}-\varepsilon \|\dot{\phi}_{\varepsilon}(f)\|_{L^{2}}^{2}$$
hence the inequality follows for the $L^{2}$ norm, and by linearity we have the same inequality for $\dot{f}$.\\
(ii) With the same idea, we find
$$\varepsilon \|\dot{\phi}_{\varepsilon}(f)\|_{L^{2}}^{2}\leq C \|f\|_{L^{2}}^{2}$$
Therefore the estimate follows by linearity.\\
(iii) From (\ref{trans}) we have that
$$\int_{0}^{1}|f-\phi_{\varepsilon}(f)|^{2}=\varepsilon \|\dot{\phi}_{\varepsilon}(f)\|_{L^{2}}^{2} -\varepsilon \|\dot{f}\|_{L^{2}}\|\dot{\phi}_{\varepsilon}(f)\|_{L^{2}}$$
Using (i) we have
$$\|f-\phi_{\varepsilon}(f)\|_{L^{2}}^{2}\leq 2 \varepsilon \|\dot{f}\|_{L^{2}}^{2}.$$
\end{proof}

\noindent
We consider now the operator $L_{\varepsilon}$ defined by
$$L_{\varepsilon}(f)(s,t)=\sum e^{-s\frac{k^{2}}{\varepsilon k^{2}+1}}e^{ikt}f_{k},$$
where $f=\sum f_{k}e^{ikt}$. This operator satisfies for $g=L_{\varepsilon}(f)$,
$$\left\{\begin{array}{ll}
\displaystyle\frac{\partial g}{\partial s}-\ddot{\phi_{\varepsilon}}(g)=0\\
\\
g(0,t)=f(t)
\end{array}
\right. $$
Notice that $L_{0}$ corresponds to the inverse of the homogeneous heat operator.

\begin{lemma}
The operator $L_{\varepsilon}$ converges to $L_{0}$ in the operator norm from $\mathcal{H}^{1}(S^{1})$ to $L^{\infty}(0,\infty,\mathcal{H}^{1}(S^{1}))$.
\end{lemma}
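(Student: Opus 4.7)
The plan is to work on the Fourier side. Both $L_\varepsilon$ and $L_0$ act diagonally on $\sum f_k e^{ikt}$, so the difference $L_\varepsilon-L_0$ is the Fourier multiplier with symbol
$$m_k(s,\varepsilon) := e^{-s\alpha_k(\varepsilon)}-e^{-sk^2},\qquad \alpha_k(\varepsilon):=\frac{k^2}{1+\varepsilon k^2}.$$
Since the $\mathcal{H}^1$ weight $(1+k^2)$ appears identically on source and target, Plancherel collapses the operator norm to a scalar quantity:
$$\|L_\varepsilon-L_0\|_{\mathcal{H}^1\to L^\infty(0,\infty;\mathcal{H}^1)}^{\,2} \;=\; \sup_{k\in\mathbb{Z}}\sup_{s\ge 0}\, m_k(s,\varepsilon)^{2}.$$
The whole statement therefore reduces to a one-variable extremum problem on the scalar symbol $m_k$.

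First I would carry out that extremum in closed form. Writing $r_k=\alpha_k/k^2=1/(1+\varepsilon k^2)\in(0,1)$ for $k\ne 0$, the critical-point equation $\alpha_k e^{-\alpha_k s}=k^2 e^{-k^2 s}$ has the unique positive root $s_k^{\ast}=\log(1/r_k)/[k^2(1-r_k)]$, which yields
$$\sup_{s\ge 0} m_k(s,\varepsilon) \;=\; (1-r_k)\,r_k^{\,r_k/(1-r_k)}.$$
In the low-frequency regime $\varepsilon k^2\to 0$ this is $O(\varepsilon k^2)$ (indeed it is asymptotic to $\varepsilon k^2/e$), so $\sup_s m_k = O(\eta)$ uniformly in the range $\varepsilon k^2\le \eta$. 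In the high-frequency regime $\varepsilon k^2\to\infty$, however, the same expression tends to $1$, since $(1-r_k)\to 1$ and $r_k^{\,r_k/(1-r_k)}\to 1$.

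This identifies the main obstacle sharply: on the full unit ball of $\mathcal{H}^1$, testing against $f$ concentrated at a single high frequency $k$ with $\varepsilon k^2\to\infty$ shows that $\sup_{\|f\|_{\mathcal{H}^1}\le 1}\sup_s\|L_\varepsilon f(s)-L_0 f(s)\|_{\mathcal{H}^1}\not\to 0$, so operator-norm convergence in the literal sense over the unit ball cannot be achieved. The resolution, consistent with the use of the lemma in Section~2, is that $L_\varepsilon\to L_0$ in operator norm on every compact subset $K\subset\mathcal{H}^1$ (equivalently, in the strong operator topology). On such a $K$ the Fourier tails are equicontinuous, producing for each $\delta>0$ an $N$ with $\sup_{f\in K}\sum_{|k|>N}(1+k^2)|f_k|^2<\delta$; choosing $\eta=\delta$ and using $\varepsilon k^2\le \eta$ precisely for $|k|\le\sqrt{\eta/\varepsilon}$, the closed form above controls the low-frequency piece uniformly, and combining with the tail estimate gives
$$\sup_{f\in K}\bigl\|L_\varepsilon f - L_0 f\bigr\|_{L^\infty(0,\infty;\mathcal{H}^1)}\;\longrightarrow\;0 \quad\text{as}\quad \varepsilon\to 0,$$
which is the operator-norm convergence actually invoked in the regularization step, where the initial data $(a,b)$ run through a compact family of curves in $\mathcal{L}_\beta$.
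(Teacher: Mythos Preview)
Your Fourier-multiplier reduction is exactly the computation the paper carries out: it writes
\[
\|(L_\varepsilon-L_0)f\|_{\mathcal{H}^1}^2 \;=\; \sum_k (1+k^2)\,|f_k|^2\,m_k(s,\varepsilon)^2
\]
and then asserts, without further argument, the uniform bound $m_k(s,\varepsilon)^2\le C\varepsilon^2$ with $C$ independent of $s$ and $\varepsilon$ (and implicitly of $k$), concluding $\|L_\varepsilon-L_0\|\le C\varepsilon$.

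You have gone further and actually computed $\sup_{s\ge 0} m_k(s,\varepsilon)$ in closed form, obtaining $(1-r_k)\,r_k^{\,r_k/(1-r_k)}$ with $r_k=(1+\varepsilon k^2)^{-1}$. Your observation that this tends to $1$ as $\varepsilon k^2\to\infty$ is correct, and it shows that the paper's claimed bound is false: for any fixed $\varepsilon>0$, taking $k$ large and $s$ of order $\varepsilon$ gives $e^{-s\alpha_k}\approx e^{-1}$ while $e^{-sk^2}\approx 0$, so $m_k\approx e^{-1}$; testing against $f=e^{ikt}$ yields $\|L_\varepsilon-L_0\|_{\mathcal{H}^1\to L^\infty_s\mathcal{H}^1}\ge c>0$ for every $\varepsilon$. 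The lemma as literally stated therefore fails, and the paper's one-line proof is in error.

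Your proposed repair --- replace operator-norm convergence by uniform convergence on compact subsets of $\mathcal{H}^1$ (equivalently, strong convergence, since the $L_\varepsilon$ are uniformly bounded) --- is the correct salvage, and it is precisely what the regularization argument in Section~2 actually needs: the initial data $(a_0,b_0)$ there range over the image of a compact set $K\subset\mathcal{L}_\beta$. The argument you sketch is sound; to make the frequency splitting cleaner, fix $\delta>0$, choose $N=N(\delta,K)$ \emph{independent of $\varepsilon$} so that $\sup_{f\in K}\sum_{|k|>N}(1+k^2)|f_k|^2<\delta$, bound the tail $|k|>N$ trivially by $\delta$ (using only $|m_k|\le 1$), and on $|k|\le N$ invoke your low-frequency estimate $\sup_s m_k=O(\varepsilon k^2)=O(\varepsilon N^2)\to 0$. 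This is tidier than tying the cutoff to $\sqrt{\eta/\varepsilon}$ as you wrote.
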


\begin{proof}
Let $f \in \mathcal{H}^{1}(S^{1})$, then $$\|(L_{\varepsilon}-L_{0})f\|_{\mathcal{H}^{1}}^{2}=\sum (1+k^{2})|f_{k}|^{2}|e^{-s\frac{k^{2}}{\varepsilon k^{2}+1}}-e^{-sk^{2}}|^{2}$$
$$\leq C\varepsilon^{2}\|f\|_{\mathcal{H}^{1}}^{2}$$
where $C$ is independent of $s$ and $\varepsilon$. Hence $\|L_{\varepsilon}-L_{0}\|_{L^{\infty}(0,\infty,\mathcal{H}^{1})}$ converges to zero as $\varepsilon$ tends to zero with a rate of at least $\varepsilon$.
\end{proof}

\noindent
A similar lemma holds for the operator $\tilde{L}_{\varepsilon}$ corresponding to the general solution of: $$\left\{\begin{array}{ll}
\displaystyle\frac{\partial g}{\partial s}-\ddot{\phi_{\varepsilon}}(g)=f\\
\\
g(0,t)=0
\end{array}
\right. $$

\begin{lemma}
Let $f\in L^{\infty}(0,\epsilon,\mathcal{H}^{l}(S^{1}))$, then $g=\tilde{L}_{\varepsilon}(f)\in L^{\infty}(0,\epsilon,\mathcal{H}^{l}(S^{1})$. Moreover, there exists $C$ independent of $\varepsilon$, such that
$$\|\tilde{L}_{\varepsilon}(f)\|_{\mathcal{H}^{l}}^{2}(s)\leq C \int_{0}^{s} \big(\|f\|_{\mathcal{H}^{l-1}}^{2}(r)+
\varepsilon \|f\|_{\mathcal{H}^{l}}^{2}(r)\big) dr$$
for all $l\geq 1$.
\end{lemma}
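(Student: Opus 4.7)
The plan is to pass to Fourier series on $S^{1}$. Writing $f(r,t)=\sum_{k\in\mathbb{Z}}f_{k}(r)e^{ikt}$ and $g(s,t)=\sum_{k\in\mathbb{Z}}g_{k}(s)e^{ikt}$, the operator $\phi_{\varepsilon}$ acts by dividing the $k$-th Fourier coefficient by $1+\varepsilon k^{2}$, and $-\ddot{}$ multiplies the $k$-th coefficient by $k^{2}$. Hence the equation defining $g=\tilde{L}_{\varepsilon}(f)$ diagonalizes into the family of scalar ODEs
$$
\dot g_{k}(s)+\lambda_{k}g_{k}(s)=f_{k}(s),\qquad g_{k}(0)=0,\qquad \lambda_{k}:=\frac{k^{2}}{1+\varepsilon k^{2}}.
$$
By Duhamel's formula the solution has the closed form $g_{k}(s)=\int_{0}^{s}e^{-(s-r)\lambda_{k}}f_{k}(r)\,dr$.

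The first step is to bound $|g_{k}(s)|^{2}$ for $k\neq 0$ by applying the Cauchy--Schwarz inequality in the $r$-integral:
$$
|g_{k}(s)|^{2}\le \Bigl(\int_{0}^{s}e^{-2(s-r)\lambda_{k}}\,dr\Bigr)\int_{0}^{s}|f_{k}(r)|^{2}\,dr\le \frac{1}{2\lambda_{k}}\int_{0}^{s}|f_{k}(r)|^{2}\,dr.
$$
The zero mode must be handled separately, since $\lambda_{0}=0$; here the ODE becomes $\dot g_{0}=f_{0}$ and one gets directly $|g_{0}(s)|^{2}\le s\int_{0}^{s}|f_{0}(r)|^{2}\,dr$, a contribution that will be absorbed into the $\mathcal{H}^{l-1}$ term of the final estimate.

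The key algebraic observation is the identity
$$
\frac{1}{\lambda_{k}}=\varepsilon+\frac{1}{k^{2}}\qquad(k\neq 0),
$$
which exhibits the two scales appearing in the target inequality. Multiplying the mode-wise bound by $(1+k^{2})^{l}$ and using the elementary inequality $(1+k^{2})^{l}/k^{2}\le 2(1+k^{2})^{l-1}$ for $|k|\ge 1$, the right-hand side splits as a sum of a term controlled by $(1+k^{2})^{l-1}|f_{k}(r)|^{2}$ and a term controlled by $\varepsilon(1+k^{2})^{l}|f_{k}(r)|^{2}$. Summing over $k\in\mathbb{Z}$, exchanging sum and integral by Tonelli, and recognizing the two resulting Sobolev norms yields
$$
\|g(s)\|_{\mathcal{H}^{l}}^{2}\le C\int_{0}^{s}\bigl(\|f(r)\|_{\mathcal{H}^{l-1}}^{2}+\varepsilon\|f(r)\|_{\mathcal{H}^{l}}^{2}\bigr)\,dr,
$$
with $C$ independent of $\varepsilon$; taking the supremum in $s$ gives the claimed $L^{\infty}(0,\epsilon;\mathcal{H}^{l})$ bound. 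I do not anticipate any serious obstacle: the argument is a routine diagonalized semigroup estimate, and the only subtlety is the separate treatment of the zero mode together with the elementary bound $(1+k^{2})/k^{2}\le 2$ used to convert one factor of $k^{-2}$ into a loss of one Sobolev derivative.
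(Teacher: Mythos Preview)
Your proof is correct and follows essentially the same route as the paper: Fourier diagonalization, the Duhamel formula $g_k(s)=\int_0^s e^{-(s-r)\lambda_k}f_k(r)\,dr$, Cauchy--Schwarz in $r$, and the identity $1/\lambda_k=(1+\varepsilon k^2)/k^2=\varepsilon+1/k^2$ to split into the $\mathcal{H}^{l-1}$ and $\varepsilon\,\mathcal{H}^{l}$ pieces. You are in fact a bit more careful than the paper, which does not isolate the zero mode explicitly.
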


\begin{proof}
We consider the Fourier expansion of $f=\sum_{k}f_{k}(s)e^{ikt}$, then for $u=\tilde{L}_{\varepsilon}(f)$ we have that $$u_{k}(s)=\int_{0}^{s}e^{-\frac{k^{2}}{1+\varepsilon k^{2}}(s-r)}f_{k}(r)dr$$
Thus
$$|u_{k}^{2}|\leq  \frac{1+\varepsilon k^{2}}{k^{2}}\int_{0}^{s}|f_{k}|^{2}(r)dr$$
Therefore
$$ \|u\|_{\mathcal{H}^{l}}^{2}\leq C\int_{0}^{s}\|f\|_{\mathcal{H}^{l-1}}^{2}(r)+\varepsilon \|f\|_{\mathcal{H}^{l}}^{2}(r)dr$$
\end{proof}

\subsubsection{Estimates along the flow}

\noindent
We consider now the component
$$ \eta(x,a,b)=e^{\int_{0}^{t}b(u,s)\overline{\mu}(u)du}\Big(\int_{0}^{t}e^{-\int_{0}^{r}b(u,s)\overline{\mu}(u)du}
((a\dot{b}-\dot{a}b)+\kappa (a^{2}+b^{2})(r,s)dr\Big)$$
where $$\kappa(x,a,b)=-\frac{\int_{0}^{1}e^{-\int_{0}^{r}b(u,s)\overline{\mu}(u)du}
(a\dot{b}-\dot{a}b)(r,s)dr}{\int_{0}^{1}e^{-\int_{0}^{r}b(u,s)\overline{\mu}(u)du}(a^{2}+b^{2})(r,s)dr}.$$
The constant $\kappa$ is computed so that $\eta$ is 1-periodic. Notice that we use $(x,a,b)$ instead of $(x,\dot{x})$ since we are interested more in the coefficients $a$ and $b$.\\
Similarly, we take $\lambda(x,a,b)=\dot{a}-\kappa b$ and $\mu(x,a,b)=\dot{b}+\kappa a$.\\
We define now
$$\eta_{\varepsilon}(x,a,b)=\eta(x,\phi_{\varepsilon}(a),\phi_{\varepsilon}(b)), \qquad \lambda_{\epsilon}(x,a,b)=\lambda(x,\phi_{\varepsilon}(a),\phi_{\varepsilon}(b))$$
$$\mu_{\epsilon}(x,a,b)=\mu(x,\phi_{\varepsilon}(a),\phi_{\varepsilon}(b)), \qquad
\kappa_{\varepsilon}(x,a,b)=\kappa(x,\phi_{\varepsilon}(a),\phi_{\varepsilon}(b))$$
The vector field
$$Z_{\varepsilon}=\lambda_{\varepsilon}\xi + \mu_{\varepsilon}v+\eta_{\varepsilon}[\xi,v]$$
is then locally Lipschitz and hence the flow
\begin{equation}\label{flow}
\left\{\begin{array}{ll}
\displaystyle\frac{\partial x}{\partial s}= Z_{\varepsilon}(x)=\lambda_{\varepsilon}\xi+\mu_{\varepsilon}v+\eta_{\varepsilon}[\xi,v]\\
\\
x(0)=x_{0}
\end{array}
\right.
\end{equation}
has a unique solution that exists in $[0,s_{0}(\varepsilon))$, by the standard Cauchy-Lipschitz theorem. It is important to notice that this flow will not stay in $\mathcal{L}_{\beta}$, in fact it will be defined in a neighborhood of $x_{0}$ in $\mathcal{H}^{2}(S^{1},M)$. But hopefully, when $\varepsilon$ converges to zero the limiting flow will be in $\mathcal{L}_{\beta}$.\\
We want to have good estimates on the coefficients $a$ and $b$ as $\varepsilon$ converges to zero.\\
Using these notations we have that under the flow generated by $Z_{\varepsilon}$ (see \cite{B4}):
\begin{equation}\label{syst}
\left\{ \begin{array}{llll}
\displaystyle\frac{\partial a}{\partial s}=\dot{\lambda}_{\varepsilon}-b\eta_{\varepsilon}+c\mu_{\varepsilon}\\
\\
\displaystyle\frac{\partial b}{\partial s}=\dot{\mu}_{\varepsilon}+(\tau a-\overline{\mu}_{\xi}b)\eta_{\varepsilon}+c(\tau \lambda_{\varepsilon}-\overline{\mu}_{\xi}\mu_{\varepsilon})\\
\\
\displaystyle\frac{\partial c}{\partial s}=\dot{\eta}_{\varepsilon}-\overline{\mu} b \eta_{\varepsilon}-\mu_{\varepsilon}a+\lambda_{\varepsilon}b-\overline{\mu}\mu_{\varepsilon} c\\
\\
a(0)=a_{0}, \; b(0)=b_{0}, \; c(0)=0.
\end{array}
\right.
\end{equation}
where $c$ is the component of $\dot{x}$ along $[\xi,v]$. Existence is not found directly from the system itself, but instead it follows from the one in (\ref{flow}), since this is the evolution of the components of the tangent vector to the curve evolving under the flow generated by $Z_{\varepsilon}$. These functions $\overline{\mu}$, $\tau$, are given as functions of $s$. That is $\overline{\mu}(x(s))$ and $\tau(x(s))$, where $x(s)$ is the solution of (\ref{flow}). We then reformulate the previous system as a fixed point problem. We will then derive appropriate estimates on (\ref{syst}) that will allow us to establish existence of the limiting flow as $\varepsilon\rightarrow0$.\\
For this purpose, we consider the operator $F_{\varepsilon}$ defined by : $$F_{\varepsilon}(x,\dot{x})=\left[\begin{array}{ccc}
-\kappa_{\varepsilon}\dot{\phi}_{\varepsilon}(b)-b\eta_{\epsilon}+c\mu_{\varepsilon}\\
\kappa_{\varepsilon}\dot{\phi}_{\varepsilon}(a)+\eta_{\varepsilon}(a\tau-\overline{\mu}_{\xi} b)+c(\tau \lambda_{\varepsilon}-\overline{\mu}_{\xi}\mu_{\varepsilon})\\
\dot{\eta}_{\varepsilon}-\overline{\mu} b \eta_{\varepsilon}-\mu_{\varepsilon}a+\lambda_{\varepsilon}b -\overline{\mu}\mu_{\varepsilon} c
\end{array}
\right]
$$
This evolution equation follows from the proposition in Appendix. \\
Now we define the space $\mathcal{B}_{\varepsilon}$, for $\varepsilon>0$, as follows:
$$\mathcal{B}_{\varepsilon}=\{ x(s,t)\in L^{\infty}(0,\varepsilon,\mathcal{H}^{2}(S^{1}));\dot{x}(s,t)\in L^{\infty}(0,\varepsilon,\mathcal{H}^{1}(S^{1}))\}.$$
So $F_{\varepsilon}$ sends $\mathcal{B}_{\epsilon}$ to itself. Also, we can define the operator $T_{\varepsilon}$ by $$T_{\varepsilon}(x,\dot{x})=L_{\varepsilon}\left[\begin{array}{ccc}
a_{0}\\
b_{0}\\
0
\end{array}
\right]+
\left[\begin{array}{ccc}
\tilde{L}_{\varepsilon}(-\kappa_{\varepsilon}\dot{\phi}_{\varepsilon}(b)-b\eta_{\epsilon}+c_{\varepsilon}\mu_{\varepsilon})\\
\tilde{L}_{\varepsilon}(\kappa_{\varepsilon}\dot{\phi}_{\varepsilon}(a)+\eta_{\varepsilon}(a\tau-\overline{\mu}_{\xi} b)+c_{\varepsilon}(\tau\lambda_{\varepsilon}-\overline{\mu}_{\xi}\mu_{\varepsilon}))\\
\int_{0}^{s}e^{\int_{s}^{r}(\overline{\mu}\mu_{\varepsilon})dr}(\dot{\eta}_{\varepsilon}-\overline{\mu} b \eta_{\varepsilon}-\mu_{\varepsilon}a+\lambda_{\varepsilon}b) ds
\end{array}
\right]
$$
So that the fixed point of $T_{\varepsilon}$ corresponds to the solution of the system (\ref{syst}). In fact we have
$$\frac{\partial}{\partial t}T_{\varepsilon}(x,\dot{x})=\ddot{\phi}_{\varepsilon}(L_{\varepsilon}\left[\begin{array}{ccc}
a_{0}\\
b_{0}\\
0
\end{array}
\right])+
\left[\begin{array}{ccc}
\ddot{\phi}_{\varepsilon}(\tilde{L}_{\varepsilon}
(-\kappa_{\varepsilon}\dot{\phi}_{\varepsilon}(b)-b\eta_{\varepsilon}+c_{\varepsilon}\mu_{\varepsilon}))\\
\ddot{\phi}_{\varepsilon}(\tilde{L}_{\varepsilon}(\kappa_{\varepsilon}\dot{\phi_{\varepsilon}}(a)+
\eta_{\varepsilon}(a\tau-\overline{\mu}_{\xi} b)+c_{\varepsilon}(\tau\lambda_{\varepsilon}\overline{\mu}_{\xi}\mu_{\varepsilon})))\\
0
\end{array}
\right]+
$$
$$+\left[\begin{array}{ccc}
-\kappa_{\varepsilon}\dot{\phi}_{\varepsilon}(b)-b\eta_{\epsilon}+c_{\varepsilon}\mu_{\varepsilon}\\
\kappa_{\varepsilon}\dot{\phi}_{\varepsilon}(a)+\eta_{\varepsilon}(a\tau-\overline{\mu}_{\xi} b)+c_{\varepsilon}(\tau\lambda_{\varepsilon}-\overline{\mu}_{\xi}\mu_{\varepsilon})\\
\dot{\eta}_{\varepsilon}-\overline{\mu} b \eta_{\varepsilon}-\mu_{\varepsilon}a+\lambda_{\varepsilon}b-\overline{\mu}\mu_{\varepsilon} c
\end{array}
\right].$$

\noindent
In what follow we will use $\|f\|$ instead of $\|f(x(\cdot))\|$ for the functions depending on the curve (such as $\tau$, $\overline{\mu}$, etc.).

\begin{lemma}
Let $x\in \mathcal{B}_{\epsilon}$ such that $\|\dot{x}\|_{L^{2}}\geq \delta >0$, then there exist three positive constants $C_{1}$, $C_{2}$ and $C_{3}$ independent of $\varepsilon$ and possibly depending on $\delta$, such that
$$\|T_{\varepsilon}(x,\dot{x})\|_{\mathcal{H}^{1}}(s)\leq \|\dot{x}_{0}\|_{\mathcal{H}^{1}}+ e^{C_{3}\int_{0}^{s}\|\dot{x}\|_{\mathcal{H}^{1}}}\int_{0}^{s}(C_{1}+C_{2}\sqrt{\varepsilon})e^{C_{3}\|\dot{x}\|_{\mathcal{H}^{1}}(r)}dr$$
\end{lemma}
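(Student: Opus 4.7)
The plan is to control separately the three rows of $T_\varepsilon(x,\dot x)$, extracting the $\|\dot x_0\|_{\mathcal{H}^{1}}$, $C_1$ and $C_2\sqrt\varepsilon$ contributions from different pieces of the formula. The homogeneous piece $L_\varepsilon(a_0,b_0,0)^{T}$ is handled first: since its Fourier symbol $e^{-sk^2/(1+\varepsilon k^2)}$ has absolute value at most $1$, the operator $L_\varepsilon$ is a contraction on $\mathcal{H}^{1}$, giving $\|L_\varepsilon(a_0,b_0,0)^{T}\|_{\mathcal{H}^{1}}\le\|\dot x_0\|_{\mathcal{H}^{1}}$ uniformly in $\varepsilon$ and $s$, which accounts for the first term on the right.

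For the two $\tilde L_\varepsilon$ components I invoke the lemma on $\tilde L_\varepsilon$ at $l=1$, which I rewrite in the refined form
\[
\|\tilde L_\varepsilon(f)\|_{\mathcal{H}^{1}}(s)\le C\int_0^s\bigl(\|f(r)\|_{L^2}+\sqrt\varepsilon\,\|f(r)\|_{\mathcal{H}^{1}}\bigr)\,dr,
\]
obtained by a Minkowski-in-$r$ treatment of the Duhamel representation $u_k(s)=\int_0^s e^{-k^2(s-r)/(1+\varepsilon k^2)}f_k(r)\,dr$ together with the split of the Fourier symbol into a low-frequency and a high-frequency piece. The task then reduces to pointwise-in-$r$ bounds on the source terms $f$, namely $-\kappa_\varepsilon\dot\phi_\varepsilon(b)-b\eta_\varepsilon+c\mu_\varepsilon$ and its analogue for the $b$-equation.

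The required pointwise bounds come from the explicit formulas for $\kappa$, $\eta$, $\lambda$, $\mu$ together with the mollifier estimates (i)--(iii). The hypothesis $\|\dot x\|_{L^2}\ge\delta$ guarantees that the denominator of $\kappa_\varepsilon$ is bounded below by a constant $c(\delta)>0$, hence
\[
|\kappa_\varepsilon(r)|\le C(\delta)\|\dot x\|_{\mathcal{H}^{1}}^{2}(r)\,e^{C\|\dot x\|_{\mathcal{H}^{1}}(r)},
\]
and similarly for $\|\eta_\varepsilon\|_{\mathcal{H}^{1}}(r)$, the exponential factor arising from the integrating factor $e^{\int_0^t b\overline\mu}$ in the definition of $\eta$. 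The crucial $\varepsilon$-uniform estimate $\sqrt\varepsilon\,\|\dot\phi_\varepsilon(b)\|_{\mathcal{H}^{1}}\le C\|b\|_{\mathcal{H}^{1}}$, obtained by a direct Fourier multiplier computation, ensures that each occurrence of $\dot\phi_\varepsilon$ inside $f$, once multiplied by $\sqrt\varepsilon$, behaves as well as $b$ in $\mathcal{H}^{1}$. Plugging these bounds into the two $\tilde L_\varepsilon$-integrals yields integrands of the form $(C_1+C_2\sqrt\varepsilon)e^{C_3\|\dot x\|_{\mathcal{H}^{1}}(r)}$, with all polynomial dependence in $\|\dot x\|_{\mathcal{H}^{1}}$ absorbed into the exponential.

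Finally, the third component is not diffusive; it is handled by solving the linear scalar ODE in $s$ by the integrating factor $e^{\int_s^r\overline\mu\mu_\varepsilon}$, whose $L^\infty$-norm is bounded by $e^{C_3\int_0^s\|\dot x\|_{\mathcal{H}^{1}}(r)\,dr}$, producing the outer exponential in the statement. The remaining integral is estimated by the same pointwise bounds as above. The main technical obstacle I expect is tracking the $\varepsilon$-dependence through products involving $\dot\phi_\varepsilon(a)$ and $\dot\phi_\varepsilon(b)$: each such factor is individually singular of order $1/\sqrt\varepsilon$ in $\mathcal{H}^{1}$, and one must verify that the $\sqrt\varepsilon$ gain from the refined $\tilde L_\varepsilon$ estimate — together with the lower bound on $\|\dot x\|_{L^2}$ used to stabilize $\kappa_\varepsilon$ — exactly compensates this singularity uniformly in $\varepsilon$.
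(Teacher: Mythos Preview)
Your overall architecture matches the paper's: bound $\kappa_\varepsilon$ and $\eta_\varepsilon$ via the explicit formulas and the lower bound $\|\dot x\|_{L^2}\ge\delta$, feed the first two rows through the $\tilde L_\varepsilon$ lemma at $l=1$, and treat the third row as a scalar ODE in $s$ solved by an integrating factor. There is, however, a genuine gap in your handling of the third component.

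You claim that the integrand $Y=\dot\eta_\varepsilon-\overline\mu\,b\,\eta_\varepsilon-\mu_\varepsilon a+\lambda_\varepsilon b$ ``is estimated by the same pointwise bounds as above'', and you locate the compensation for the $1/\sqrt\varepsilon$ singularity of $\dot\phi_\varepsilon$ in the $\sqrt\varepsilon$ gain coming from $\tilde L_\varepsilon$. But the third row is \emph{not} an application of $\tilde L_\varepsilon$: it is a bare $s$-integral, so there is no smoothing gain available there. Taken at face value, $\|\mu_\varepsilon a\|_{\mathcal{H}^1}$, $\|\lambda_\varepsilon b\|_{\mathcal{H}^1}$ and $\|\dot\eta_\varepsilon\|_{\mathcal{H}^1}$ are each of order $1/\sqrt\varepsilon$, and your bound on the third row blows up as $\varepsilon\to0$.

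What the paper uses, and what your outline is missing, is an algebraic cancellation built into the definition of $\eta_\varepsilon$. Since $\eta_\varepsilon=\eta(x,\phi_\varepsilon(a),\phi_\varepsilon(b))$ satisfies the transport ODE with $(a,b)$ replaced by $(\phi_\varepsilon(a),\phi_\varepsilon(b))$, one has
\[
\dot\eta_\varepsilon=\overline\mu\,\phi_\varepsilon(b)\,\eta_\varepsilon+\phi_\varepsilon(a)\,\mu_\varepsilon-\lambda_\varepsilon\,\phi_\varepsilon(b),
\]
and inserting this into $Y$ collapses it to
\[
Y=\overline\mu\,\eta_\varepsilon\bigl(\phi_\varepsilon(b)-b\bigr)+\mu_\varepsilon\bigl(\phi_\varepsilon(a)-a\bigr)+\lambda_\varepsilon\bigl(b-\phi_\varepsilon(b)\bigr).
\]
Every term now carries a factor $\phi_\varepsilon(\cdot)-(\cdot)$, which is $O(\sqrt\varepsilon)$ in $L^2$ by the mollifier estimate~(iii); this exactly neutralizes the $1/\sqrt\varepsilon$ in $\|\mu_\varepsilon\|_{\mathcal{H}^1}$ and $\|\lambda_\varepsilon\|_{\mathcal{H}^1}$. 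One obtains $\|Y\|_{L^2}=O(\sqrt\varepsilon)$ and $\|Y\|_{\mathcal{H}^1}$ bounded uniformly in $\varepsilon$, which then feeds into the $s$-integral to give the stated estimate (and, incidentally, the crucial fact $\|c\|_{L^2}=O(\sqrt\varepsilon)$ used later in the convergence step). Insert this identity before estimating the third component and the rest of your scheme goes through.
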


\begin{proof}
We first need an estimate on $\lambda_{\varepsilon}$, $\mu_{\varepsilon}$ and $\eta_{\varepsilon}$. In order to do that, an estimate on the variable $\kappa_{\varepsilon}$ is necessary.\\
By the very definition of $\kappa$ we get
$$|\kappa_{\varepsilon}|\leq C \frac{(\|a\|_{\mathcal{H}^{1}}+\|b\|_{\mathcal{H}^{1}})^{2}e^{2\|b\|_{\mathcal{H}^{1}}\|\overline{\mu}\|_{L^{\infty}}}}{\|a\|_{L^{2}}^{2}+\|b\|_{L^{2}}^{2}}$$
Using the previous estimate and Lemma (2.1.), we have
$$\|\lambda_{\varepsilon}(x,\dot{x})\|_{\mathcal{H}^{1}}\leq \frac{1}{\sqrt{\varepsilon}}\|a\|_{\mathcal{H}^{1}}+
\frac{(\|a\|_{\mathcal{H}^{1}}+\|b\|_{\mathcal{H}^{1}})^{2}e^{2\|b\|_{\mathcal{H}^{1}}\|\overline{\mu}\|_{L^{\infty}}}}
{\|a\|_{L^{2}}^{2}+\|b\|_{L^{2}}^{2}}\|b\|_{\mathcal{H}^{1}}$$
A similar estimate holds for $\mu_{\varepsilon}$, that is
$$\|\mu_{\varepsilon}(x,\dot{x})\|_{\mathcal{H}^{1}}\leq \frac{1}{\sqrt{\varepsilon}}\|b\|_{\mathcal{H}^{1}}+
\frac{(\|a\|_{\mathcal{H}^{1}}+\|b\|_{\mathcal{H}^{1}})^{2}e^{2\|b\|_{\mathcal{H}^{1}}\|\overline{\mu}\|_{L^{\infty}}}}
{\|a\|_{L^{2}}^{2}+\|b\|_{L^{2}}^{2}}\|a\|_{\mathcal{H}^{1}}$$
The estimate for $\eta_{\varepsilon}$ is a little bit different
$$\|\eta_{\varepsilon}\|_{\mathcal{H}^{2}}\leq C(\|b\|_{\mathcal{H}^{1}}(\|b\|_{\mathcal{H}^{1}}+\|a\|_{\mathcal{H}^{1}}+\|c\|_{\mathcal{H}^{1}}+1)e^{2\|b\|_{\mathcal{H}^{1}}\|\overline{\mu}\|_{L^{\infty}}}
\Big(\frac{1}{\varepsilon}(\|a\|_{\mathcal{H}_{1}}+\|b\|_{\mathcal{H}^{1}})^{2}+
\frac{\|a\|_{\mathcal{H}^{1}}+\|b\|_{\mathcal{H}^{1}}}{\|a\|_{L^{2}}+\|b\|_{L^{2}}}\Big)$$
And by taking again another derivative we have the desired estimate. In fact we have
$$\|\dot{\eta}_{\varepsilon}\|_{L^{2}} \leq Ce^{\|b\|_{\mathcal{H}^{1}}\|\overline{\mu}\|_{L^{\infty}}}
(\|\overline{\mu}\|_{L^{\infty}}\|b\|_{L^{2}}+e^{\|b\|_{\mathcal{H}^{1}}\|\overline{\mu}\|_{L^{\infty}}}
(\|a\|_{\mathcal{H}^{1}}^{2}+\|b\|_{\mathcal{H}^{1}}^{2}+|\kappa_{\varepsilon}|(\|a\|_{L^{2}}^{2}+\|b\|_{L^{2}}^{2}))$$
Now
$$\|-\kappa_{\varepsilon}\dot{\phi}_{\varepsilon}(b)- b\eta_{\epsilon}+c\mu_{\varepsilon}\|_{\mathcal{H}^{1}}\leq C\big(\frac{1}{\sqrt{\varepsilon}}|\kappa_{\varepsilon}\||b\|_{\mathcal{H}^{1}}+\|b\|_{\mathcal{H}^{1}}\|\eta_{\varepsilon}\|_{\mathcal{H}^{1}}+
\|c\|_{\mathcal{H}^{1}}\|\mu_{\varepsilon}\|_{\mathcal{H}^{1}}\big)$$
Also
$$\|\kappa_{\varepsilon}\dot{\phi}_{\varepsilon}(a)+\eta(a\tau-\overline{\mu}_{\xi}b)+
c_{\varepsilon}(\tau\lambda_{\varepsilon}-\overline{\mu}_{\xi}\mu_{\varepsilon})\|_{\mathcal{H}^{1}}\leq C\big(\frac{1}{\sqrt{\varepsilon}}|\kappa_{\varepsilon}|\|a\|_{\mathcal{H}^{1}}+$$
$$+\|\dot{x}\|_{L^{2}}\|\eta_{\varepsilon}\|_{\mathcal{H}^{1}}(\|a\|_{\mathcal{H}^{1}}+\|b\|_{\mathcal{H}^{1}})+
\|\dot{x}\|_{L^{2}}\|c\|_{\mathcal{H}^{1}}(\|\mu_{\varepsilon}\|_{\mathcal{H}^{1}}+\|\lambda_{\varepsilon}\|_{\mathcal{H}^{1}})\big)$$
It is crucial to notice that $\eta_{\varepsilon}$ satisfies $$\dot{\eta}_{\varepsilon}=\overline{\mu}\eta_{\varepsilon}+\phi_{\varepsilon}(a)\mu_{\varepsilon}-\lambda_{\varepsilon}\phi_{\varepsilon}(b)$$
Therefore we have
$$\|\dot{\eta}_{\varepsilon}-\overline{\mu} b\eta_{\varepsilon}-\mu_{\varepsilon}a+\lambda_{\varepsilon}b\|_{\mathcal{H}^{1}}=
\|\overline{\mu}\eta_{\varepsilon}(\phi_{\varepsilon}(b)-b)+\mu_{\varepsilon}(\phi_{\varepsilon}(a)-a)+
\lambda_{\varepsilon}(b-\phi_{\varepsilon}(b))\|_{\mathcal{H}^{1}}$$
$$\leq \|\overline{\mu}\|_{\mathcal{H}^{1}}\|\eta_{\varepsilon}\|_{\mathcal{H}_{1}}\|\phi_{\varepsilon}(b)-b\|_{L^{2}}+ \|\overline{\mu}\|_{L^{2}}\|\eta_{\varepsilon}\|_{L^{2}}\|b\|_{\mathcal{H}^{1}}+$$
$$+\|\mu_{\varepsilon}\|_{L^{2}}\|a\|_{\mathcal{H}^{1}}+\|\mu_{\varepsilon}\|_{\mathcal{H}^{1}}\|a-\phi_{\varepsilon}(a)\|_{L^{2}}+ $$
$$+\|\lambda_{\varepsilon}\|_{\mathcal{H}^{1}}\|b-\phi_{\varepsilon}(b)\|_{L^{2}}+\|\lambda_{\varepsilon}\|_{L^{2}}\|b\|_{\mathcal{H}^{1}}$$
Using Lemma (2.1.) we have
$$\|\dot{\eta}_{\varepsilon}-\overline{\mu} b \eta_{\varepsilon}-\mu_{\varepsilon}a+\lambda_{\varepsilon}b\|_{\mathcal{H}^{1}}\leq C\big(\sqrt{\varepsilon}(\|\overline{\mu}\|_{\mathcal{H}^{1}}\|\eta_{\varepsilon}\|_{\mathcal{H}_{1}}\|b\|_{\mathcal{H}^{1}}+
\|\mu_{\varepsilon}\|_{\mathcal{H}^{1}}\|a\|_{\mathcal{H}^{1}}+\|\lambda_{\varepsilon}\|_{\mathcal{H}^{1}}\|b\|_{\mathcal{H}^{1}})+$$
$$+(\|\overline{\mu}\|_{L^{\infty}}\|\eta_{\varepsilon}\|_{L^{2}}\|b\|_{\mathcal{H}^{1}}+
\|\mu_{\varepsilon}\|_{L^{2}}\|a\|_{\mathcal{H}^{1}}+\|\lambda_{\varepsilon}\|_{L^{2}}\|b\|_{\mathcal{H}^{1}})\big)$$
Now for the $L^{2}$ norm we have
$$\|\dot{\eta}_{\varepsilon}-\overline{\mu} b \eta_{\varepsilon}-\mu_{\varepsilon}a+\lambda_{\varepsilon}b\|_{L^{2}}\leq C\big(\sqrt{\varepsilon}(\|\overline{\mu}\|_{\mathcal{H}^{1}}\|\eta_{\varepsilon}\|_{\mathcal{H}_{1}}\|b\|_{\mathcal{H}^{1}}+
\|\mu_{\varepsilon}\|_{\mathcal{H}^{1}}\|a\|_{\mathcal{H}^{1}}+\|\lambda_{\varepsilon}\|_{\mathcal{H}^{1}}\|b\|_{\mathcal{H}^{1}})\big)$$
Let us set
$$ Y=\dot{\eta}_{\varepsilon}-\overline{\mu} b \eta_{\varepsilon}-\mu_{\varepsilon}a+\lambda_{\varepsilon}b, \qquad A=-\overline{\mu}\mu_{\varepsilon}$$
So if we write a curve $x\in \mathcal{H}^{2}(S^{1},M)$ as
$$\dot{x}=a\xi +bv +c [\xi,v]$$
then $c$ satisfies
$$\frac{\partial}{\partial s}c=Ac+Y$$
From this equality we have
$$\frac{\partial}{\partial s}\|c\|_{L^{2}}\leq C \|A\|_{L^{2}}\|c\|_{L^{2}}+\|Y\|_{L^{2}}$$
thus
$$\|c\|_{L^{2}}\leq C \int_{0}^{s}e^{\int_{r}^{s}\|A\|_{L^{2}}dr}\|Y\|_{L^{2}}$$
$$\leq C\sqrt{\varepsilon}e^{\int_{0}^{s}\|\dot{x}\|_{\mathcal{H}^{1}}}\int_{0}^{s}\|\dot{x}\|_{\mathcal{H}^{1}}^{3}ds$$
In a similar manner we have
$$\frac{\partial }{\partial s}(\|\dot{c}\|_{L^{2}})\leq
C \|A\|_{L^{2}}\|\dot{c}\|_{L^{2}}+\|A\|_{\mathcal{H}^{1}}\|c\|_{L^{2}}+\|Y\|_{\mathcal{H}^{1}}$$
Thus
$$\|\dot{c}\|_{L^{2}}\leq C e^{C\int_{0}^{s}\|\dot{x}\|_{\mathcal{H}^{1}}}\int_{0}^{s}\|\dot{x}\|_{\mathcal{H}^{1}}^{4}ds$$
Hence
$$\|c\|_{\mathcal{H}^{1}}\leq C_{1} e^{C\int_{0}^{s}\|\dot{x}\|_{\mathcal{H}^{1}}} \int_{0}^{s}(1+\|\dot{x}\|_{\mathcal{H}^{1}}^{4})ds$$
And the conclusion of the lemma follows from the estimate that we got on the operator $\tilde{L}_{\varepsilon}$ in Lemma 2.3.
\end{proof}

\noindent
We set $s_{0}(\varepsilon)$ the existence time of the solution of system (\ref{syst}). Then we have the following
\begin{theorem}
There exists $\varepsilon_{0}>0$, and $\sigma>0$ such that for every $\varepsilon<\varepsilon_{0}$, $s_{0}(\varepsilon)>\sigma$.
\end{theorem}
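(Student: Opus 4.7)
The plan is to combine the estimate of the previous lemma with a bootstrap (continuation) argument on $\|\dot{x}(s)\|_{\mathcal{H}^{1}}$. First I observe that the classical Cauchy--Lipschitz theorem applied to $Z_{\varepsilon}$ already gives local existence on $[0,s_{0}(\varepsilon))$, but its Lipschitz constant contains the singular factor $1/\sqrt{\varepsilon}$ coming from $\dot{\phi}_{\varepsilon}$, so this existence time is not \emph{a priori} bounded below uniformly in $\varepsilon$. I therefore avoid using Cauchy--Lipschitz for that purpose, and instead show directly along the existing solution that $\|\dot{x}(s)\|_{\mathcal{H}^{1}}$ cannot double on a time interval $[0,\sigma]$ whose length depends only on $\|\dot{x}_{0}\|_{\mathcal{H}^{1}}$; the standard continuation criterion (no blow-up in $\mathcal{H}^{1}$) then forces $s_{0}(\varepsilon)\geq\sigma$.

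Concretely, I set $u_{0}=\|\dot{x}_{0}\|_{\mathcal{H}^{1}}$, $M=2u_{0}$, and define
$$s^{*}=\sup\bigl\{s\in[0,s_{0}(\varepsilon)):\|\dot{x}(r)\|_{\mathcal{H}^{1}}\leq M\ \text{for all}\ r\in[0,s]\bigr\}.$$
Since the solution of (\ref{syst}) is exactly a fixed point of $T_{\varepsilon}$, the previous lemma gives, on $[0,s^{*}]$,
$$\|\dot{x}\|_{\mathcal{H}^{1}}(s)\leq u_{0}+s\,(C_{1}+C_{2}\sqrt{\varepsilon})\,e^{C_{3}Ms}\,e^{C_{3}M}.$$
I then pick $\varepsilon_{0}>0$ so that $C_{2}\sqrt{\varepsilon_{0}}\leq C_{1}$, and $\sigma>0$ (depending only on $u_{0}$ and the $C_{i}$) so that $2C_{1}\sigma\,e^{C_{3}M(1+\sigma)}\leq \tfrac{1}{2}u_{0}$; both choices are manifestly independent of $\varepsilon$. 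On $[0,\min(s^{*},\sigma)]$ the displayed inequality then yields $\|\dot{x}\|_{\mathcal{H}^{1}}(s)\leq \tfrac{3}{2}u_{0}<M$, and continuity of $s\mapsto \|\dot{x}(s)\|_{\mathcal{H}^{1}}$ rules out $s^{*}<\sigma$. Hence $s^{*}\geq\sigma$, and $s_{0}(\varepsilon)>\sigma$ by continuation.

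The main obstacle is precisely the one already packaged into the previous lemma: one must arrange that the $1/\sqrt{\varepsilon}$ singularity of $\lambda_{\varepsilon}$ and $\mu_{\varepsilon}$, inherited from the estimate $\|\dot{\phi}_{\varepsilon}(f)\|_{\mathcal{H}^{1}}\leq C\varepsilon^{-1/2}\|f\|_{\mathcal{H}^{1}}$ of Lemma 2.1, cancels against the $\sqrt{\varepsilon}$ gain for $\tilde{L}_{\varepsilon}$ provided by Lemma 2.3. That cancellation is what yields an $\varepsilon$-independent constant $C_{1}$ in front of the leading contribution and only a $C_{2}\sqrt{\varepsilon}$ remainder; without it the bootstrap above would not be uniform in $\varepsilon$. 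Once this structural feature is in hand, the theorem follows from the Gronwall--continuation argument sketched above, and the upshot is a uniform $\sigma$ on which the approximated flow exists for every small $\varepsilon$, ready to be sent to the limit in the next subsection.
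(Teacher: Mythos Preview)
Your argument is correct and reaches the same conclusion as the paper, but by a genuinely different route. The paper does not run a bootstrap; instead it first isolates an abstract Gronwall-type lemma: if $y_{\varepsilon}\leq C_{\varepsilon}\,f\bigl(\int_{0}^{s}y_{\varepsilon}\bigr)$ for some increasing positive $f$, then the blow-up time of $y_{\varepsilon}$ is bounded below in terms of $C_{\varepsilon}$ alone. It then massages the estimate of Lemma~2.4 into this form by applying Jensen's inequality to $u\mapsto e^{C_{3}u}$ (assuming $s\leq 1$), obtaining $y\leq \|\dot{x}_{0}\|_{\mathcal{H}^{1}}+(C_{1}+\sqrt{\varepsilon}\,C_{2})\bigl(\int_{0}^{s}e^{C_{3}y}\bigr)^{2}$, sets $H=e^{C_{3}y}$, and invokes the lemma with $f(t)=e^{\|\dot{x}_{0}\|_{\mathcal{H}^{1}}+(C_{1}+\sqrt{\varepsilon}C_{2})t^{2}}$. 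Your continuation argument bypasses both the auxiliary lemma and the Jensen step: once you freeze $\|\dot{x}\|_{\mathcal{H}^{1}}\leq M$ on $[0,s^{*}]$, the exponential and the integral in Lemma~2.4 are bounded trivially, and the inequality becomes linear in $s$, so the ``cannot double'' argument closes immediately. What the paper's approach buys is a reusable comparison principle (and it gives an implicit formula $U(s)\leq G^{-1}(Cs+G(0))$ for the growth, not just a short-time bound); what your approach buys is directness and the avoidance of the Jensen manipulation. Either way the essential point---that the $\varepsilon$-singular terms have already been absorbed into $C_{1}+C_{2}\sqrt{\varepsilon}$ by Lemma~2.4---is the same, and you identify it correctly.
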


\noindent
The proof follows from a Gronwall type inequality. Let us first, state and prove the general inequality:

\begin{lemma}
Let $y_{\varepsilon}$ be a family of non-negative $C^{1}$ functions such that
$$y_{\varepsilon}\leq C_{\varepsilon} f\Big(\int_{0}^{s}y_\varepsilon(r)dr\Big)$$
for an increasing and positive function $f$. Then the blow-up time of $y_{\varepsilon}$ depends only on $C_{\varepsilon}$. More precisely, if $C_{\varepsilon}$ is bounded then the blow-up time is bounded away from zero.
\end{lemma}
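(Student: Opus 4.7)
The plan is to reduce this integral inequality to a separable scalar ODE via the Bihari (nonlinear Gronwall) trick. Set $Y_{\varepsilon}(s)=\int_{0}^{s}y_{\varepsilon}(r)\,dr$; by the non-negativity and $C^{1}$ regularity of $y_{\varepsilon}$, the function $Y_{\varepsilon}$ is non-decreasing and $C^{1}$, with $Y_{\varepsilon}(0)=0$ and $Y_{\varepsilon}'(s)=y_{\varepsilon}(s)\le C_{\varepsilon}f(Y_{\varepsilon}(s))$. Since $f>0$, one can divide to obtain $Y_{\varepsilon}'(s)/f(Y_{\varepsilon}(s))\le C_{\varepsilon}$.

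Next, introduce the primitive $F(t)=\int_{0}^{t}\frac{du}{f(u)}$, which is strictly increasing (and well-defined at $0$ because $f(0)>0$), and integrate the previous inequality from $0$ to $s$. This yields $F(Y_{\varepsilon}(s))\le C_{\varepsilon}s$, hence, inverting $F$ on its range, $Y_{\varepsilon}(s)\le F^{-1}(C_{\varepsilon}s)$. Plugging back into the hypothesis and using that $f$ is increasing gives the explicit a priori bound $y_{\varepsilon}(s)\le C_{\varepsilon}f\bigl(F^{-1}(C_{\varepsilon}s)\bigr)$, valid for every $s$ such that $C_{\varepsilon}s$ lies in the range of $F$.

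Now set $F(\infty):=\lim_{t\to\infty}F(t)\in(0,+\infty]$. The bound above shows that $y_{\varepsilon}$ cannot blow up as long as $C_{\varepsilon}s<F(\infty)$; equivalently, the blow-up time $s^{*}(\varepsilon)$ satisfies $s^{*}(\varepsilon)\ge F(\infty)/C_{\varepsilon}$. Consequently, if $C_{\varepsilon}\le C_{0}$ for all sufficiently small $\varepsilon$, one gets the uniform lower bound $s^{*}(\varepsilon)\ge F(\infty)/C_{0}>0$ (it is automatically $+\infty$ in the degenerate case $F(\infty)=+\infty$, i.e. when $1/f$ fails to be integrable at infinity), proving the claim.

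There is no genuine obstacle in the argument: it is a textbook comparison with the autonomous ODE $Y'=C_{\varepsilon}f(Y)$. The only point worth verifying when the lemma is applied to the right-hand side produced in Lemma 2.5 is that the corresponding $f$ (built from polynomial and exponential expressions in $\|\dot x\|_{\mathcal{H}^{1}}$) is indeed positive, increasing, and locally integrable in $1/f$ near $0$, so that $F$ and its inverse are well defined; this is immediate for the combinations appearing there.
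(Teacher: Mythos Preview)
Your argument is correct and is essentially identical to the paper's proof: both set $U(s)=\int_0^s y_\varepsilon$, pass from the hypothesis to $U'\le C_\varepsilon f(U)$, separate variables via the primitive $G$ (your $F$) of $1/f$, and then invert to bound $U$ and hence $y_\varepsilon$. Your extra remark identifying the explicit lower bound $s^*(\varepsilon)\ge F(\infty)/C_\varepsilon$ is a nice touch, but otherwise there is no substantive difference.
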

\begin{proof}
For the sake of simplicity we will remove the index $\varepsilon$ for now. Let us set $U=\int_{0}^{s}y$, then one have
$$U'\leq C f(U)$$
Hence if we set $G$ to be the anti-derivative of $\frac{1}{f}$, then we get $$G(U(s))-G(U(0))\leq Cs.$$
Therefore, $$U(s)\leq G^{-1}(Cs+G(0)),$$
thus $$y\leq C f(G^{-1}(Cs+G(0))).$$
And result of the lemma follows.
\end{proof}

\begin{proof}(of Theorem)
We have that the solution $x_{\varepsilon}$ (or  more precisely $\dot{x}_{\varepsilon}$) is a fixed point of $T_{\varepsilon}$. From the previous lemma we have
$$\|\dot{x}_{\varepsilon}\|_{\mathcal{H}^{1}}(s)\leq \|\dot{x}_{0}\|_{\mathcal{H}^{1}}+e^{C_{3}\int_{0}^{s}\|\dot{x}\|_{\mathcal{H}^{1}}}
(C_{1}+\sqrt{\varepsilon}C_{2})\int_{0}^{s}e^{C_{3}\|\dot{x}\|_{\mathcal{H}^{1}}}(r)dr$$
Hence if we set if we set $y=\|\dot{x}\|_{\mathcal{H}^{1}}$, then we have
$$y \leq \|\dot{x}_{0}\|_{\mathcal{H}^{1}}+e^{C_{3}\int_{0}^{s}y}(C_{1}+\sqrt{\varepsilon}C_{2})\int_{0}^{s}e^{C_{3}y}$$
Now by using the Jensen inequality for the convex function $u \longmapsto e^{C_{3}u}$ and assuming that $s\leq 1$ we have
$$y\leq \|\dot{x}_{0}\|_{\mathcal{H}^{1}}+(C_{1}+\sqrt{\varepsilon}C_{2})\big(\int_{0}^{s}e^{C_{3}y}\big)^{2}$$
Setting $H=e^{C_{3}y}$ we get
$$H \leq e^{\|\dot{x}_{0}\|_{\mathcal{H}^{1}}+(C_{1}+\sqrt{\varepsilon} C_{2})(\int_{0}^{s} H 	z)^{2}}$$
so we can apply the previous lemma for $H$ and $f(t)=e^{||\dot{x}_{0}||_{\mathcal{H}^{1}}+(C_{1}+\sqrt{\varepsilon}C_{2})t^{2}}$. Since the constants never blow-up in our case, we have the result of the theorem, that is the blow-up time for $H$ and hence for $\|\dot{x}\|_{\mathcal{H}^{1}}$ is bounded from below independently on $\varepsilon$.
\end{proof}

\subsection{Convergence}

\noindent
Now to see the convergence of the solution as $\varepsilon$ tends to zero, we need the following
\begin{lemma}
If $x_{\varepsilon}$ is the solution of the flow defined above, then: \\

(i) $c_{\varepsilon}$ converges to zero in $L^{2}$\\

(ii) $a_{\varepsilon}$ and $b_{\varepsilon}$ converge strongly in $L^{2}$ to a solution of the flow (\ref{lim}).\\

\end{lemma}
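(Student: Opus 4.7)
For part $(i)$, the conclusion can be read off from the estimate already derived in the proof of Lemma 2.4:
$$\|c_\varepsilon\|_{L^2}(s) \leq C\sqrt{\varepsilon}\,e^{\int_0^s \|\dot x_\varepsilon\|_{\mathcal{H}^1}}\int_0^s \|\dot x_\varepsilon\|_{\mathcal{H}^1}^{3}\,dr.$$
By Theorem 2.5 the quantities on the right are bounded uniformly in $\varepsilon$ on a common interval $[0,\sigma)$, so $c_\varepsilon\to 0$ in $L^\infty(0,\sigma;L^2(S^1))$, a fortiori in $L^2$.

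For part $(ii)$ I would proceed in three steps. Step one, weak compactness: the uniform bound on $\dot x_\varepsilon$ in $L^\infty(0,\sigma;\mathcal{H}^1(S^1))$ produces, along a subsequence, weak-$*$ limits $a_\varepsilon\rightharpoonup a$, $b_\varepsilon\rightharpoonup b$. Step two, strong compactness: using the evolution system (\ref{syst}) together with the bounds on $\lambda_\varepsilon,\mu_\varepsilon,\eta_\varepsilon$ from Lemma 2.4 and the strong convergence of $c_\varepsilon$ to zero obtained in $(i)$, I would derive a uniform control on $\partial_s a_\varepsilon,\partial_s b_\varepsilon$ in $L^2(0,\sigma;\mathcal{H}^{-1}(S^1))$; the Aubin--Lions lemma combined with the compact embedding $\mathcal{H}^1\hookrightarrow L^2$ then upgrades the weak convergence to $a_\varepsilon\to a$, $b_\varepsilon\to b$ strongly in $C([0,\sigma];L^2(S^1))$. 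Step three, identification of the limit: Lemma 2.1 $(iii)$ gives $\phi_\varepsilon(a_\varepsilon)\to a$ and $\phi_\varepsilon(b_\varepsilon)\to b$ in $L^2$, whence the nonlocal coefficients $\kappa_\varepsilon,\eta_\varepsilon$ converge to their non-regularized analogues (after an integration by parts in the definition of $\kappa$ to convert derivative factors into weakly continuous integral expressions against bounded kernels). Passing to the limit in the mild formulation encoded by $T_\varepsilon$, using the operator convergence $L_\varepsilon\to L_0$ of Lemma 2.2 and its analogue for $\tilde L_\varepsilon$, identifies $(a,b)$ as a solution of (\ref{lim}).

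The hardest step will be the passage to the limit in $\dot\lambda_\varepsilon,\dot\mu_\varepsilon$, since these terms in (\ref{syst}) involve the second derivatives $\ddot\phi_\varepsilon(a_\varepsilon),\ddot\phi_\varepsilon(b_\varepsilon)$, which for $\mathcal{H}^1$ data are only of size $O(\varepsilon^{-1/2})$ in $L^2$ and do not converge on their own. The mechanism for taming this singularity is to rewrite them via the defining identity (\ref{trans}) of $\phi_\varepsilon$, namely $\ddot\phi_\varepsilon=\varepsilon^{-1}(\phi_\varepsilon-\mathrm{id})$, and to absorb the factor $\varepsilon^{-1}$ into the heat-type semigroup $L_\varepsilon$. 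This is precisely how the limiting diffusion $\ddot a,\ddot b$ of (\ref{lim}) emerges, while the discrepancy $\phi_\varepsilon-\mathrm{id}$ produces remainders that vanish at rate $\sqrt{\varepsilon}$ by Lemma 2.1 $(iii)$.
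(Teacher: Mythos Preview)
Your proposal is correct and follows essentially the same route as the paper. For part $(i)$ the paper writes out the integral formula
$$c_{\varepsilon}(s)=\int_{0}^{s}e^{\int_{s}^{r}\overline{\mu}\mu_{\varepsilon}\,dr}\big(\overline{\mu}\eta_{\varepsilon}(\phi_{\varepsilon}(b_{\varepsilon})-b_{\varepsilon})+\mu_{\varepsilon}(\phi_{\varepsilon}(a_{\varepsilon})-a_{\varepsilon})+\lambda_{\varepsilon}(b_{\varepsilon}-\phi_{\varepsilon}(b_{\varepsilon}))\big)\,dr$$
and argues that the $(\phi_\varepsilon-\mathrm{id})$ factors drive the integrand to zero; your citation of the ready-made $\sqrt{\varepsilon}$ bound from Lemma~2.4 is the same mechanism, only packaged more efficiently. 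For part $(ii)$ the paper also works in the mild formulation $\dot x_\varepsilon=T_\varepsilon(\dot x_\varepsilon)$ and passes to the limit term by term using $L_\varepsilon\to L_0$, $c_\varepsilon\mu_\varepsilon\to 0$, $b_\varepsilon\eta_\varepsilon\to b\eta$, $\kappa_\varepsilon\to\kappa$, exactly as in your Step~3.

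The one genuine difference is that the paper does not invoke Aubin--Lions: it extracts the needed strong $L^p$ convergence of $a_\varepsilon,b_\varepsilon$ directly from the uniform $\mathcal{H}^1$ bound (the compact embedding $\mathcal{H}^1\hookrightarrow L^p$ at each fixed $s$, along a subsequence), and then lets the fixed-point identity do the rest. Your insertion of an Aubin--Lions step, based on the $\mathcal{H}^{-1}$ bound for $\partial_s a_\varepsilon,\partial_s b_\varepsilon$, buys you uniformity in $s$ and makes the passage to the limit in the time-integrated terms of $T_\varepsilon$ cleaner; the paper's argument is shorter but correspondingly more informal on this point. Your identification of the singular $\ddot\phi_\varepsilon$ terms as the crux, and their taming via absorption into the semigroup $L_\varepsilon$, is exactly the mechanism the paper relies on implicitly through the operator $T_\varepsilon$.
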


\begin{proof}
(i) Notice that $$c_{\varepsilon}(s)=\int_{0}^{s}e^{\int_{s}^{r}(\overline{\mu}\mu_{\varepsilon})dr}(\overline{\mu}\eta_{\varepsilon}(\phi_{\varepsilon}(b_{\varepsilon})-b_{\varepsilon})+\mu_{\varepsilon}(\phi_{\varepsilon}(a_{\varepsilon})-a_{\varepsilon})+\lambda_{\varepsilon}(b_{\varepsilon}-\phi_{\varepsilon}(b_{\varepsilon}))(r)dr.$$
Since we have boundedness in the $\mathcal{H}^{1}$ sense for $s\in[0,s_{0}]$ we can extract a convergent subsequence in all the $L^{p}$; this, combined with the boundedness of $\eta_{\varepsilon}$, $\mu_{\varepsilon}$ and $\lambda_{\varepsilon}$, gives us the convergence to zero in $L^{2}$.\\
(ii) Let us go back to the fixed point formulation,
$$\dot{x}_{\varepsilon}=T_{\varepsilon}(\dot{x}_{\varepsilon})$$
that is $$a_{\varepsilon}=L_{\varepsilon}(a_{0})+\tilde{L}_{\varepsilon}(-\kappa_{\varepsilon}\dot{\phi}_{\varepsilon}(b_{\varepsilon})- b_{\varepsilon}\eta_{\epsilon}+c_{\varepsilon}\mu_{\varepsilon})$$
The term $  c_{\varepsilon}\mu_{\varepsilon}$ converges strongly to zero in $L^{2}$. Also we have convergence of the term $b_{\varepsilon}\eta_{\epsilon}$ to $b\eta$ and $\kappa_{\varepsilon}$ to $\kappa$. This tells us that $\tilde{L}_{\varepsilon}(\dot{\phi}_{\varepsilon}(b_{\varepsilon}))$ converges in $L^{2}$. So one can check that the limit is $\tilde{L}_{0}(\dot{a})$. The same holds for $b_{\varepsilon}$, hence we can send $\varepsilon$ to $0$ to get a limiting system of the form:

$$\left\{ \begin{array}{ll}
\displaystyle\frac{\partial a}{\partial s}=\ddot{a}-\kappa \dot{b}-b\eta \\
\\
\displaystyle\frac{\partial b}{\partial s}=\ddot{b}+\kappa \dot{a}+\eta(a\tau-\overline{\mu}_{\xi} b)\\
\end{array}
\right.$$
\end{proof}

\noindent
Therefore we proved that the system induces indeed a flow on the curves of $\mathcal{L}_{\beta}$ and it has a regularizing effect on $a$ and $b$ caused by the diffusion operator and this leads to the main result of this section.

\section{From $\mathcal{L}_{\beta}$ to $\mathcal{C}_{\beta}^{+}$}
In this section we will lift curves having isolated zeros of $a$ from a compact set of curves in $\mathcal{L}_{\beta}$ to the space $\mathcal{C}_{\beta}^{+}$. This is where the assumption $(B)$ is crucial. Since flowing along $v$ will always transport vectors with negative $\xi$ components to vectors with positive component. But first we want to give a rigorous definition on the transport along a curve in $\mathcal{L}_{\beta}$ by extending its tangent vector in a small neighborhood. The lifting process is not continuous, in fact, we will show that this procedure will lead to the formation of some ``singularities'' that will be removed later.\\

\subsection{Extending the tangent vector $\dot{x}$ of a curve $x$ in $\mathcal{L}_{\beta}$}

\noindent
First we describe here a way of extending the tangent vector $\dot{x}$ to a curve $x$ in $\mathcal{L}_{\beta}$ which we take with the $\mathcal{H}^{2}$-topology, to a neighborhood of it, near a point $x(t)$ where $\dot{x}(t)$ is non zero. This becomes a vector field in that neighborhood allowing us to define a transport map along the curve. We consider a curve $x\in \mathcal{L}_{\beta}$. Let $S$ a disc at $x(0)$ (we are assuming that $\dot{x}(0)$ is non zero) transverse to the curve and tangent to $[\xi,v]$ at $x(0)$(see figure \ref{fig1} below). Now for any $y_{0}\in S$, we consider the solution of the dynamical system
$$y'(t)=a(t)\xi(y(t))+b(t)v(y(t))$$
with $y(0)=y_{0}$. Since $a$ and $b$ are in $\mathcal{H}^{1}$, by the continuous dependence on the initial conditions, there exists a small neighborhood of $x$ such that every point of it lies on exactly one of those orbits for a suitable $y_{0}$ and hence we associate to it the tangent to the orbit at that point, and this gives an extension of $\dot{x}$.

\begin{center}
\begin{figure}[H]
\centering
\includegraphics[scale=.4]{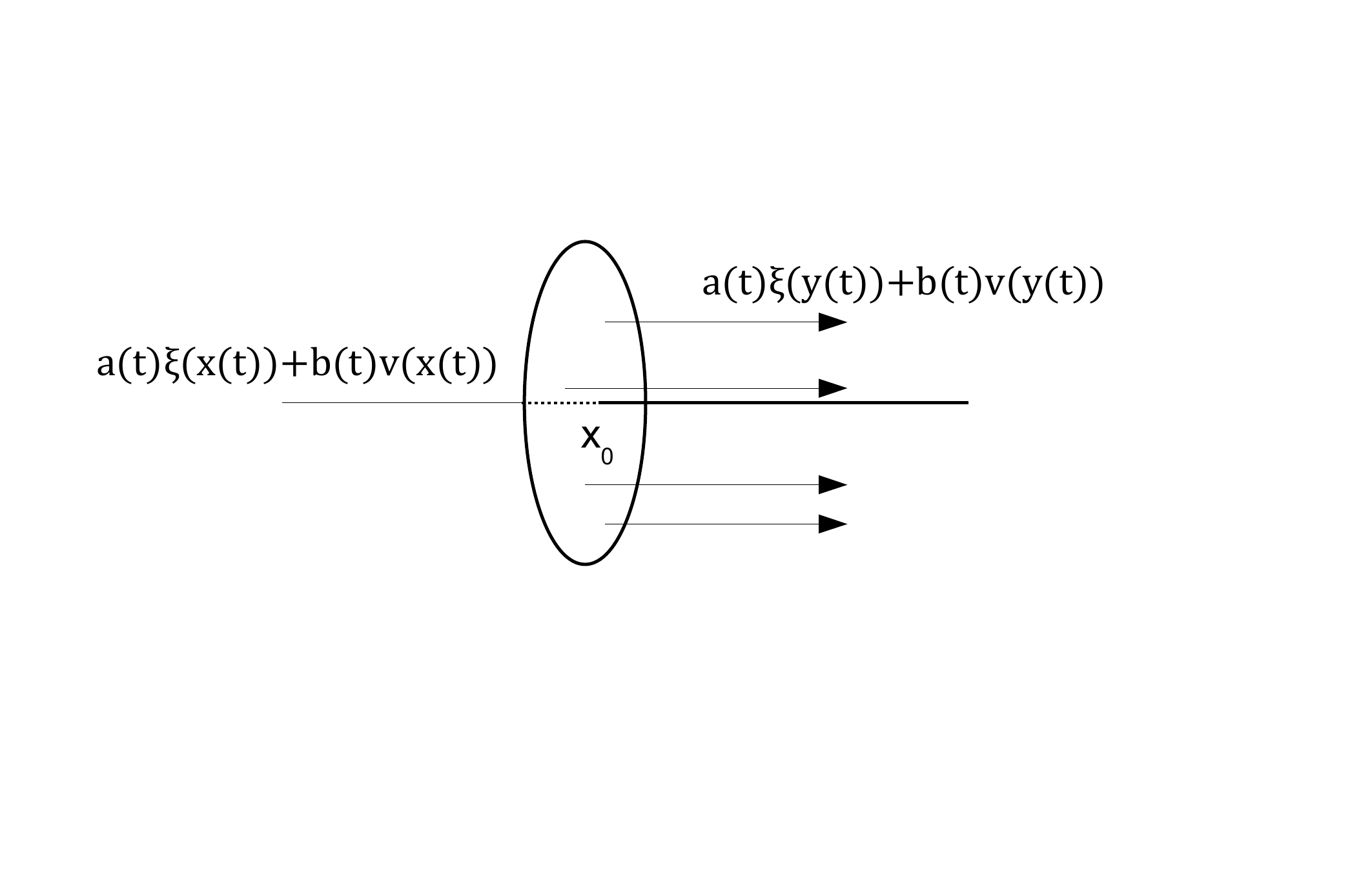}
\caption{Extension of the tangent vector}
\label{fig1}
\end{figure}
\end{center}

\noindent
It is important to notice that in fact this construction does not really give a vector field if the curve is not embedded. If the curve self-intersects then there could be a problem, but notice that the transport map is well defined by use of the dynamical system above.

\subsection{First deformation and formation of Dirac masses}

\noindent
Let us consider a compact set of curves in $\mathcal{L}_{\beta}$, which we can view as given by a map
$$f:S^{l}\mapsto \mathcal{L}_{\beta}$$
for some $l\in \mathbb{N}$. We claim (see section 2 ) that after a small $C^1$-perturbation, we may assume that all curves in $K=f(S^{l})$ have $\dot{x}=a\xi +bv$, with $a$ and $b$ having a finite number of zeros. We will use this result only for the case when $a$ has a finite number of zeros.\\
Hence curves in the class $K$ can be seen as pieces of curves with $a>0$ and pieces with $a<0$. In order to start our argument, we consider the case when the the curve has only two distinct zeros at $t_{0}$ and $t_{1}$. The argument will be extended later using a filtration adapted to the number of zeros of $a$. The filtration will include degenerate zeros of various orders.\\
Let us consider a curve $x(t)$ under the assumptions described above and assume that $a$ is negative in $(t_{0},t_{1})$. We will lift in the sequel the pieces with $a<0$ by using the transport map $\varphi_{t}$. Let us now describe this procedure.\\
Let $t_{2}=\frac{t_{0}+t_{1}}{2}$. Then, since $a(t_{2})\not =0$, we can define
$$ s_{0} = \inf \lbrace s>0; D \varphi_{s} ( \dot{x}(t_{2})) = \gamma \xi , \gamma >0 \rbrace.$$
Therefore if we consider the function $$g(s,t)=\varphi^{*}_{s}\beta_{x(t)}(\dot{x}(t))$$ then one has     $$g(s_{0},t_{2})=0$$ and
$$\partial_{s} g(s,t)= \varphi^{*}_{s} \left( \mathcal{L}_{v} \beta \right)(\dot{x}(t))$$
$$=d \beta_{\varphi_{s}(x(t))}(v,D \varphi_{s}(\dot{x}(t))).$$
Hence, at $(s_{0},t_{2})$, we get
$$\partial_{s} g(s_{0},t_{2}))=-\alpha(v,[v,\gamma \xi])=-\gamma<0.$$
Thus, by mean of the implicit function theorem, we can define the piece of curve $y(t)=\varphi_s(t)(x(t))$, for $t \in (t_{0},t_{1})$.
Notice that $$\dot{y}(t)=D \varphi_{s(t)}(\dot{x}(t))+\dot{s}(t) v=\gamma \xi+\dot{s}(t) v.$$
If we now close the curve by pieces of $v$, we transform our original curve $x$ to a curve $\tilde{x}$ in $\mathcal{L}_{\beta}$ having pieces with $a>0$, pieces with $a=0$ and some isolated zeros of $a$ (see figure \ref{fig2}).\\

\begin{center}
\begin{figure}[H]
\centering
\includegraphics[scale=.4]{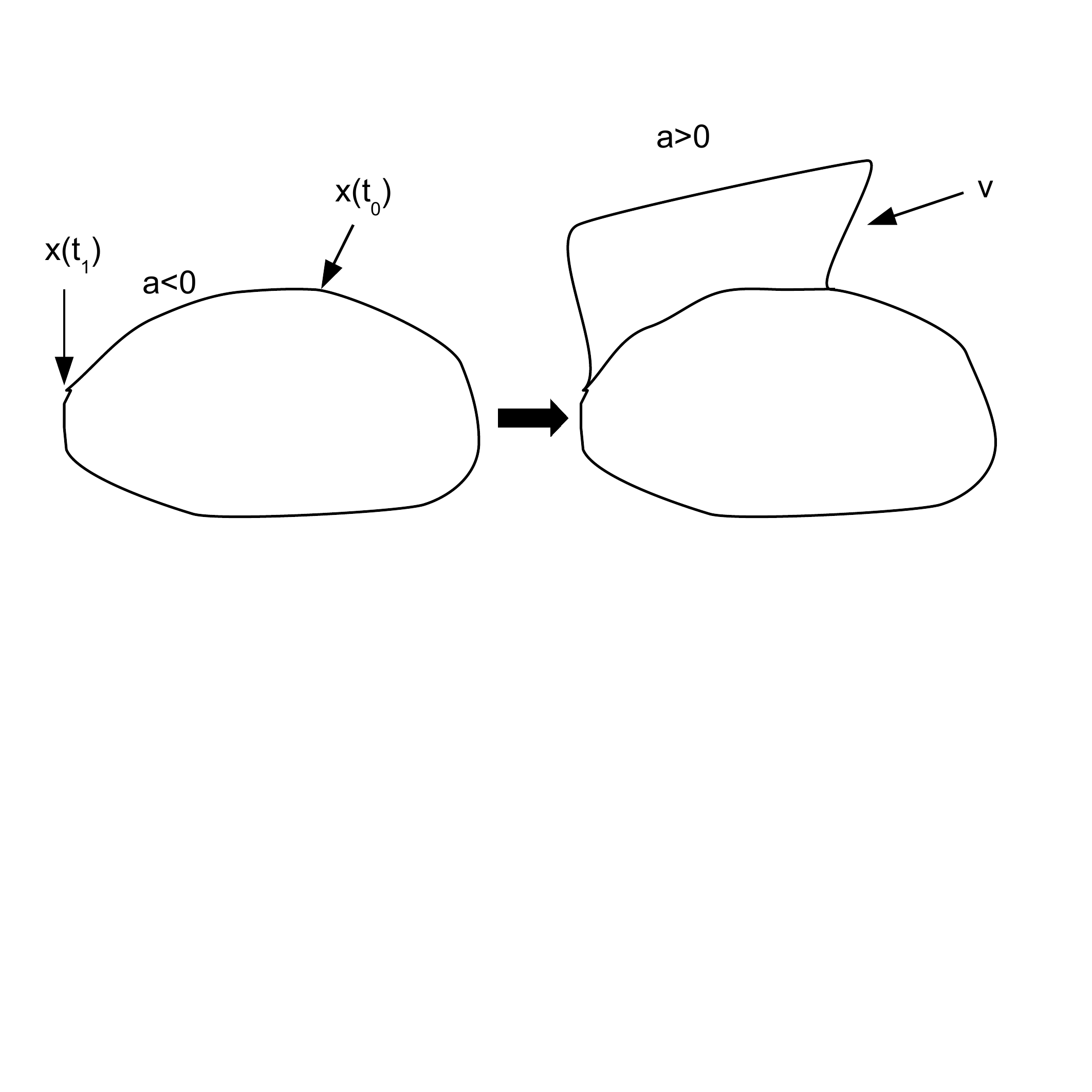}
\caption{Lifting the negative parts}
\label{fig2}
\end{figure}
\end{center}

\noindent
Notice that we can transform this process into a gradual process that will not take place in $\mathcal{L}_{\beta}$, that is, taking the map $f:S^l \longmapsto \mathcal{L}_{\beta}$ we construct a homotopy $U:[0,1]\times S^l \longmapsto \Lambda(M)$ such that $U(0,\cdot)=f(\cdot)$ and $U(1,\cdot)$ is valued in $\mathcal{C}_{\beta}^{+}$. Since the injection of $\mathcal{L}_{\beta}\hookrightarrow \Lambda(M)$ is a homotopy equivalence and since $\mathcal{C}_{\beta}^{+}$ injects into $\Lambda(M)$, this will lead us, after we resolve the issue of the ``Dirac masses'', to the fact that $\mathcal{C}_{\beta}^{+} \hookrightarrow \Lambda(M)$ is an $S^1$-homotopy equivalence. In fact, by using the previous construction we see that the new curve that we get in $\mathcal{C}_{\beta}^{+}$ reads as $y(t) = \varphi_{s(t)}(x(t))$, thus the homotopy that one might consider would be $H(t,l):=\varphi_{ls(t)}(x(t))$ (see figure \ref{fig3}).

\begin{center}
\begin{figure}[H]
\centering
\includegraphics[scale=.4]{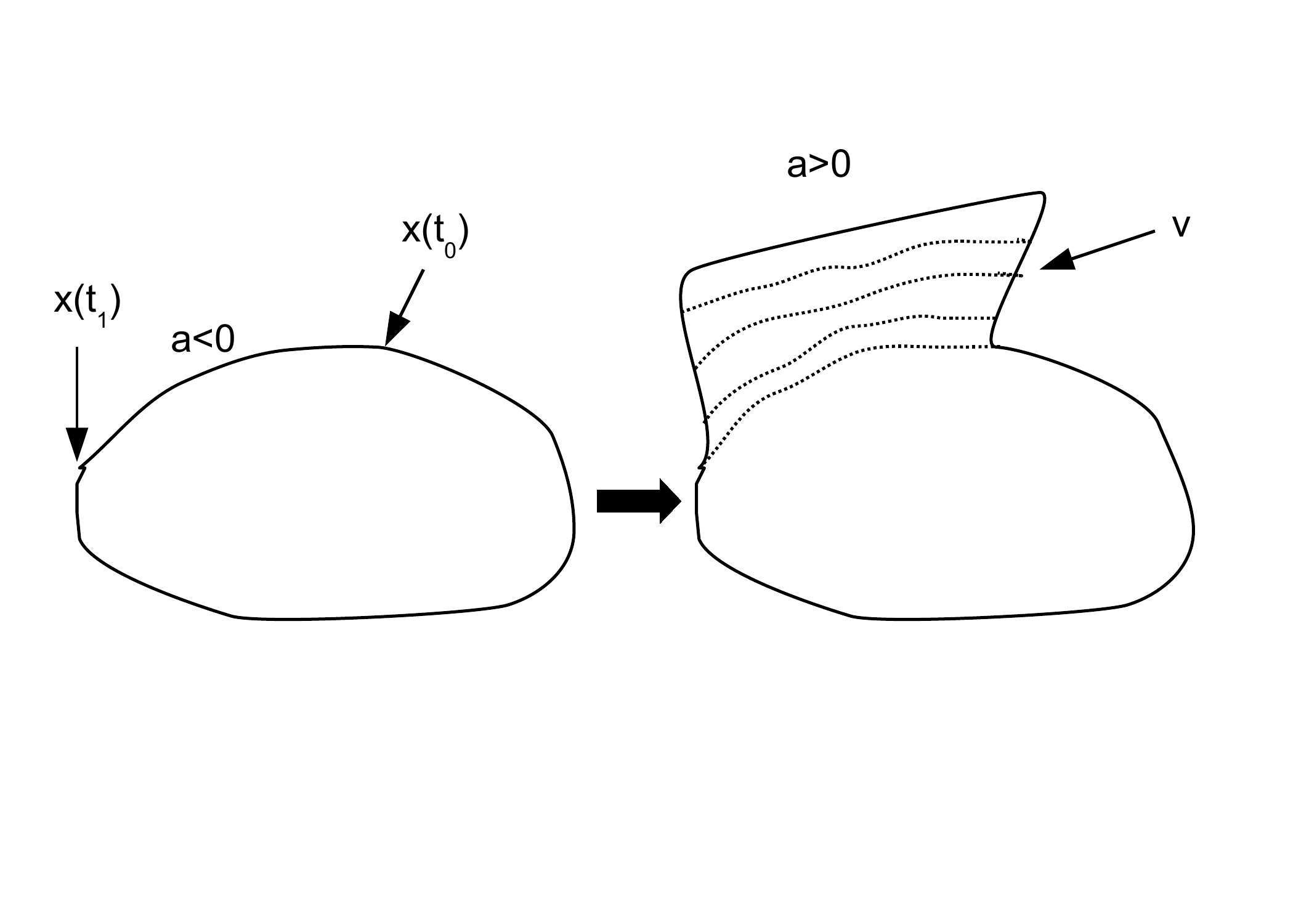}
\caption{The lifting as a deformation}
\label{fig3}
\end{figure}
\end{center}

\noindent
Now thinking of the base curve, the zeros $t_{0}$ and $t_{1}$ of $a$ can come to each other, collide and cancel as $x$ varies in $S^l$ and $f(x)$ varies in $K=f(S^l)$. Tracking our construction over this deformation we see how a Dirac Mass, that is a back and forth run along $v$, can be created as two zeros of $a$ come to each other and collapse (see figure \ref{fig4}).

\begin{center}
\begin{figure}[H]
\includegraphics[scale=.4]{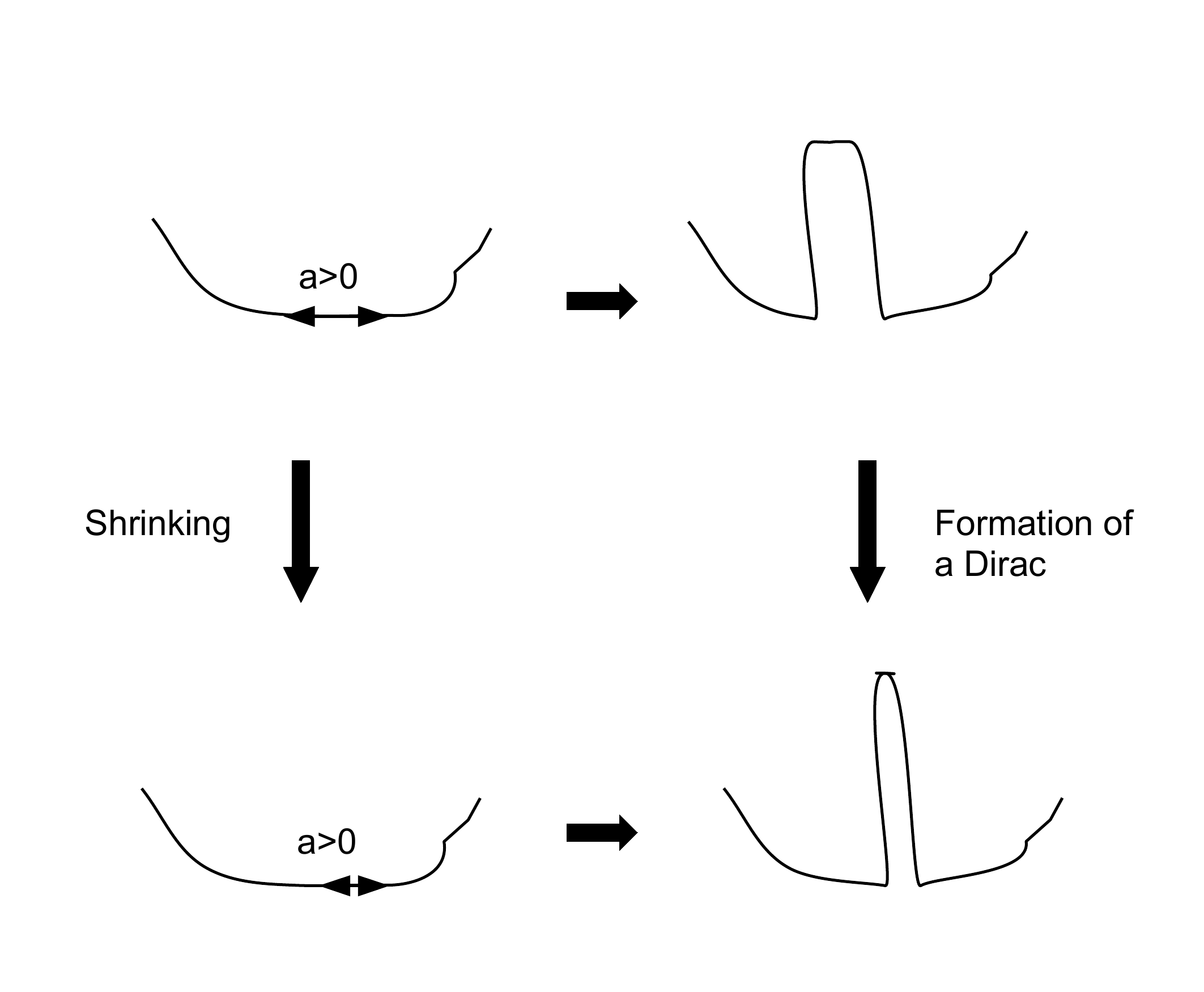}
\caption{Formation of a ``Dirac Mass''}
\label{fig4}
\end{figure}
\end{center}

\noindent
More complicated phenomena take place as we resolve the case of collapse of zeros of $a$ with higher multiplicity, also as $x$ varies in $S^l$. Let us understand first the collapse of two zeros.

\subsection{Simple Dirac mass}

\noindent
Here we consider a curve $x$, such that $\dot{x}=a\xi + bv$ where $a$ is positive everywhere except at a point $x(t_{0})$ where there is a back and forth $v$ jump of length $l$.\\
Consider the family $y(t,s)$ for $s\in [0,1]$, that coincides with the curve $x$ and at $x(t_{0})$, $y(t,s)$ has a back and forth $v$ jump of length $sl$. By using a lemma in \cite{B1}, we recognize a process with which the Dirac mass can be gradually removed (see figure \ref{fig5}).

\begin{center}
\begin{figure}[H]
\centering
\includegraphics[scale=.4]{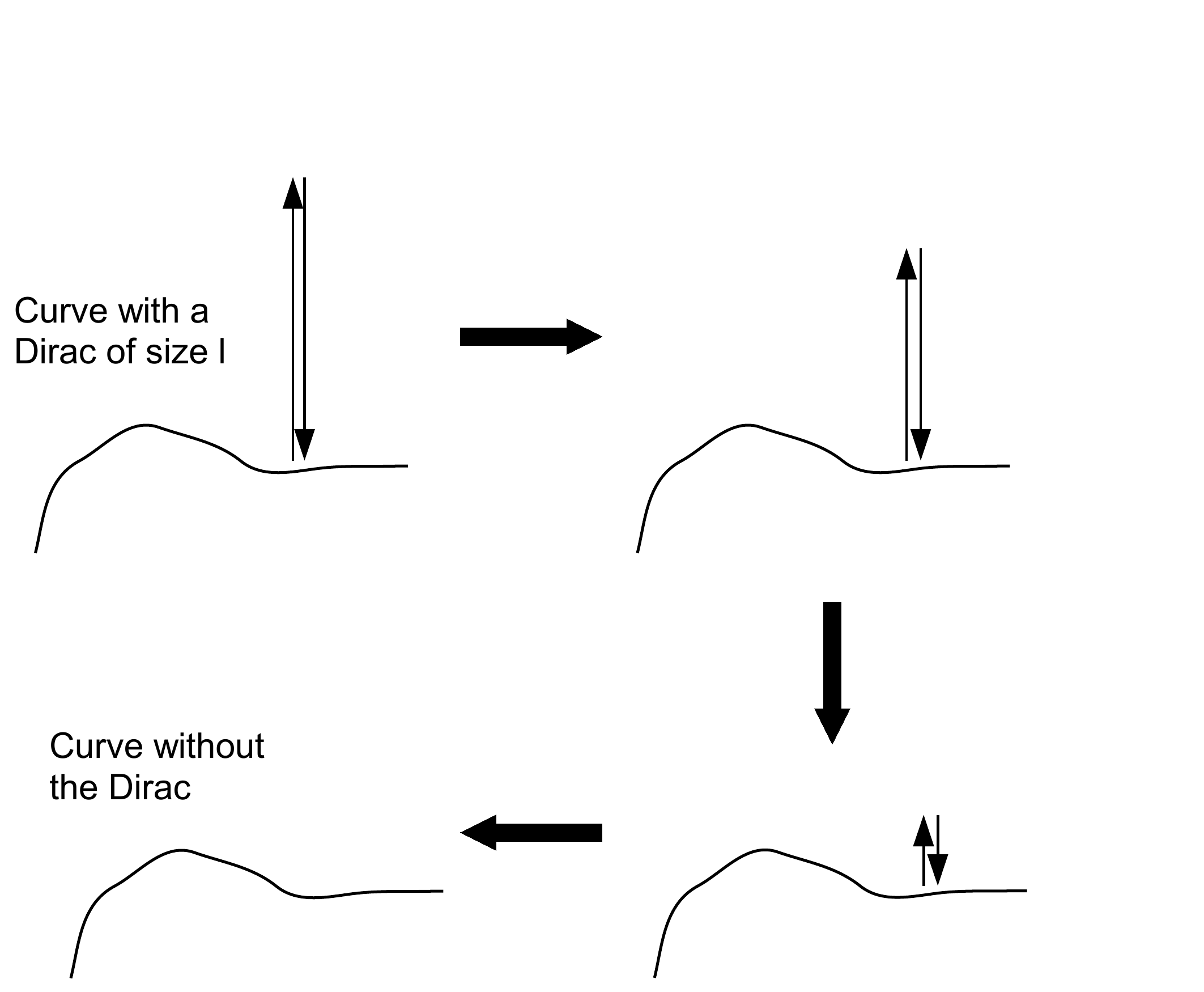}
\caption{Cancellation of a Dirac mass}
\label{fig5}
\end{figure}
\end{center}

\noindent
\subsection{Cancellation Process}

\noindent
Here, let us consider a curve close to $x$, in the case it supports a ``nearly'' Dirac mass, namely the former back and forth run along $v$ is now ``opened'' by a little bit: a small curve of length $\varepsilon$, where $\dot{x}=a\xi +bv$ and $a>0$ is inserted in between the forth and back run along $v$.\\
To remove this nearly Dirac mass, with a process that coincides with the one in the previous subsection as $\varepsilon$ goes to zero, we construct a deformation vector $Z=\lambda \xi +\mu v +\eta [\xi,v]$ along the curve. This is done by first taking $-v$ at B, in fact, if we want it to be adapted to the length of the size of the Dirac mass, then we should take $-lv$ instead at B, where $l$ is the length of the $v$ jump. We will disregard this fact for now. So, after taking $-v$ at B, we transport it along $-\dot{x}$ ($\dot{x}$ is close to $\xi$ on the ``opening'' so that the extension described above is possible) till we reach A. Eventually, we will have a non-zero $[\xi; v]$ component for the transported vector at A. We then transport, by using the transport map of $v$, our vector from A to C and we adjust the $v$-length of this $v$-jump, adding or subtracting a $\delta s v$, so that the $v$ component of the transported vector at C is zero. At C, we know that $a=0$ and this is inconvenient. Therefore, we transport it a bit further to a point $p$ where $a,|b| > c > 0$,(see figure \ref{fig6}).

\begin{center}
\begin{figure}[H]
\includegraphics[scale=.4]{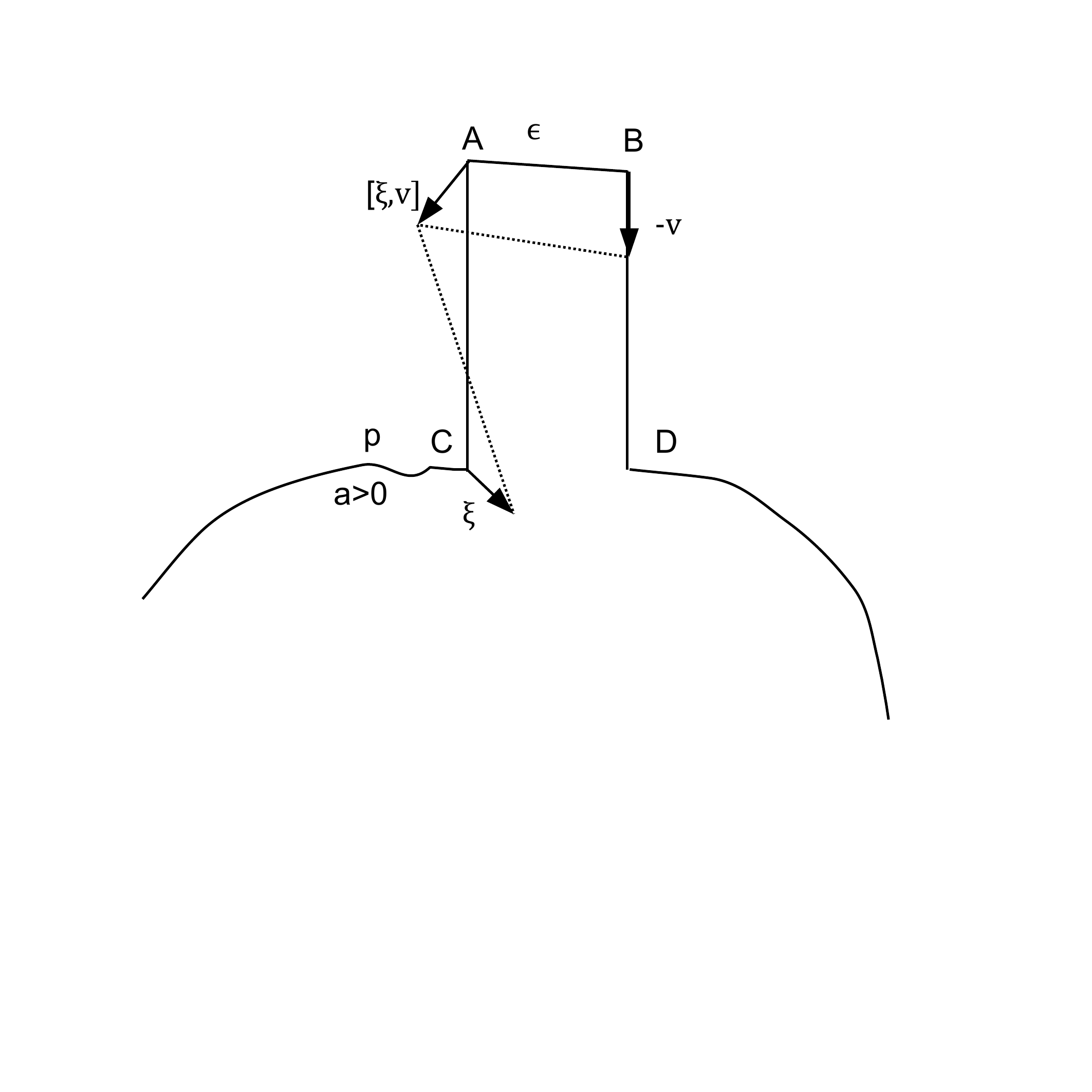}
\caption{Nearly Dirac mass}
\label{fig6}
\end{figure}
\end{center}

\noindent
The requirement that $|b|>c$ at $p$ is not needed, but will be used to prove a stronger result of deformation, along which the number of zeros of $b$ does not increase over the deformation. It can be dropped for the proof of Theorem 1. We need now to compensate the vector that we got at $p$ and to derive precise estimates on $a$ and $b$ as they are deformed to adjust the variation of the curve induced by the tangent vector described above.\\
The main idea of the compensation is first to span the kernel of $\alpha$ at $p$. This will be done by using a combination of two process involving the introduction of a small perturbation to the curve.\\
The first process is meant to generate a $v$ component at a given point on the curve. Namely, given a point $x_{1}$ on a curve $x$ of $\mathcal{C}_{\beta}$ (in fact we only need that the point $x_{1}$ is located in a portion of the curve with $a>0$) we can construct a small variation of the curve near $x_{1}$ so that we get a vector almost equal to $v$ at that point. In fact, this can be described as changing $b$ to $\lambda b$, in a small interval before $x_{1}$, with $\lambda>1$ to generate $v$ and $\lambda<1$ to generate $-v$ (see figure \ref{fig7}).

\begin{center}
\begin{figure}[H]
\centering
\includegraphics[scale=.4]{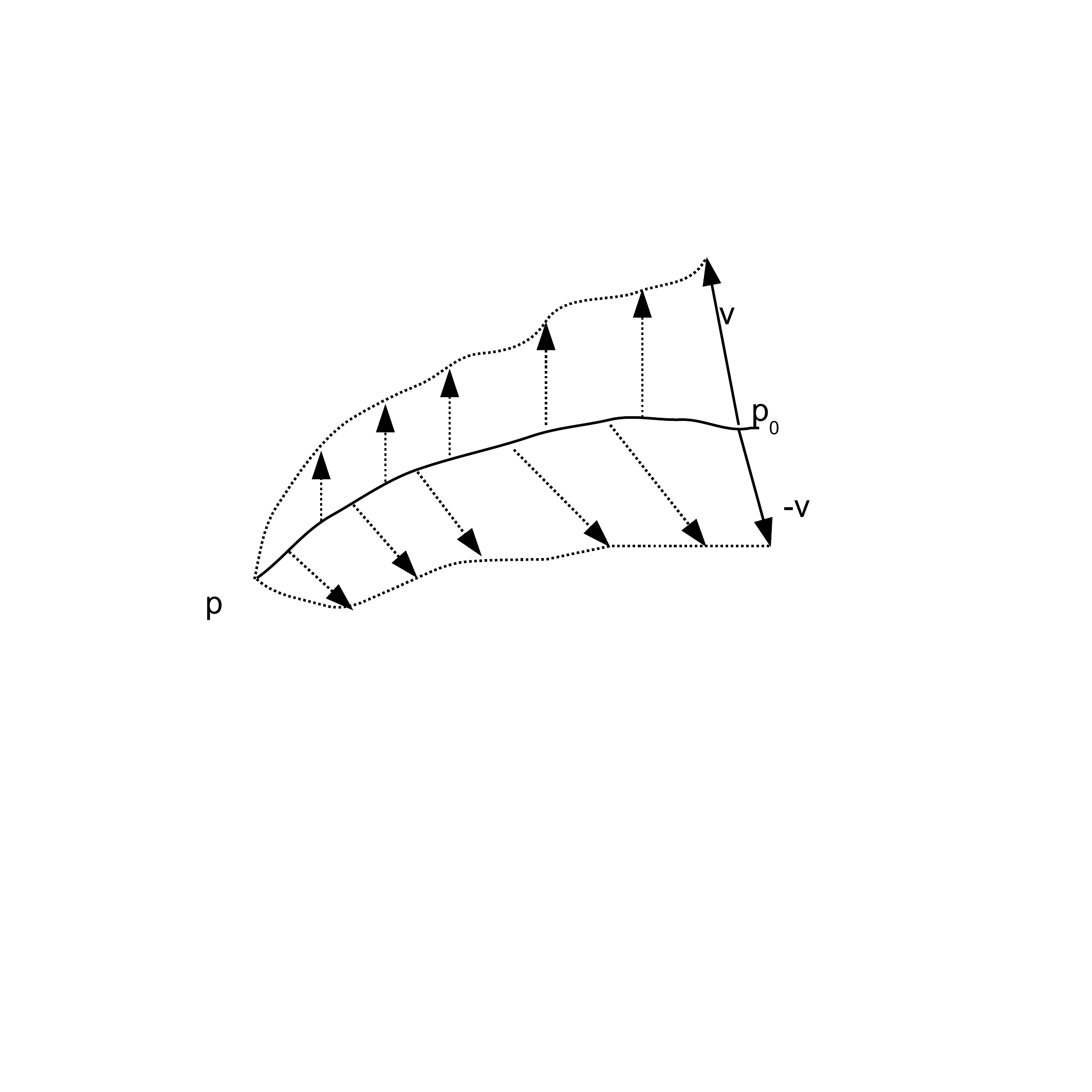}
\caption{Perturbation of $b$ to create $v$}
\label{fig7}
\end{figure}
\end{center}

\noindent
The second step consists on generating a $[\xi,v]$ component at a given point of a curve $x$ in $\mathcal{C}_{\beta}$. This will be done by transporting $v$ (and this is where the combination comes in) along $\dot{x}$ to the point $x_{2}$ where we want to get the $[\xi,v]$ component (see figure \ref{fig7.1}); of course we will have other components at that point but we will show that they have a minor contribution.

\begin{center}
\begin{figure}[H]
\centering
\includegraphics[scale=.8]{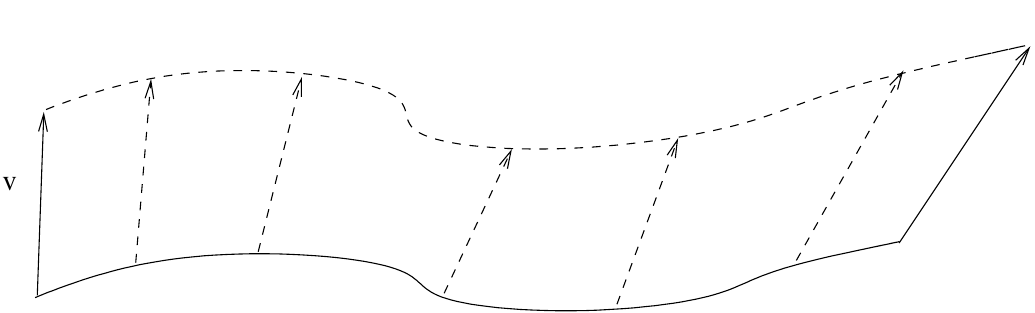}
\caption{Creation of a $[\xi,v]$ component}
\label{fig7.1}
\end{figure}
\end{center}


\noindent
We will show that a combination of these two steps, after a careful choice of the points in the curve for inserting them, indeed span the kernel of $\alpha$ (after a projection parallel to $\dot{x}$) at the desired point. The last step then consists of removing the undesired components along $\xi$. This will be done by using the transport map along the curve as described in paragraph 3.1, as we will transport the vector $a\xi+bv$ along itself, and by adjusting the length, we can cancel the additional component to find a resultant vector in kernel of $\alpha$.\\
All the previous construction will be made precise as it depends tightly on the choice of the points and on the portions of the curve on which the deformation be built (see figure \ref{fig8}).

\begin{center}
\begin{figure}[H]
\centering
\includegraphics[scale=.6]{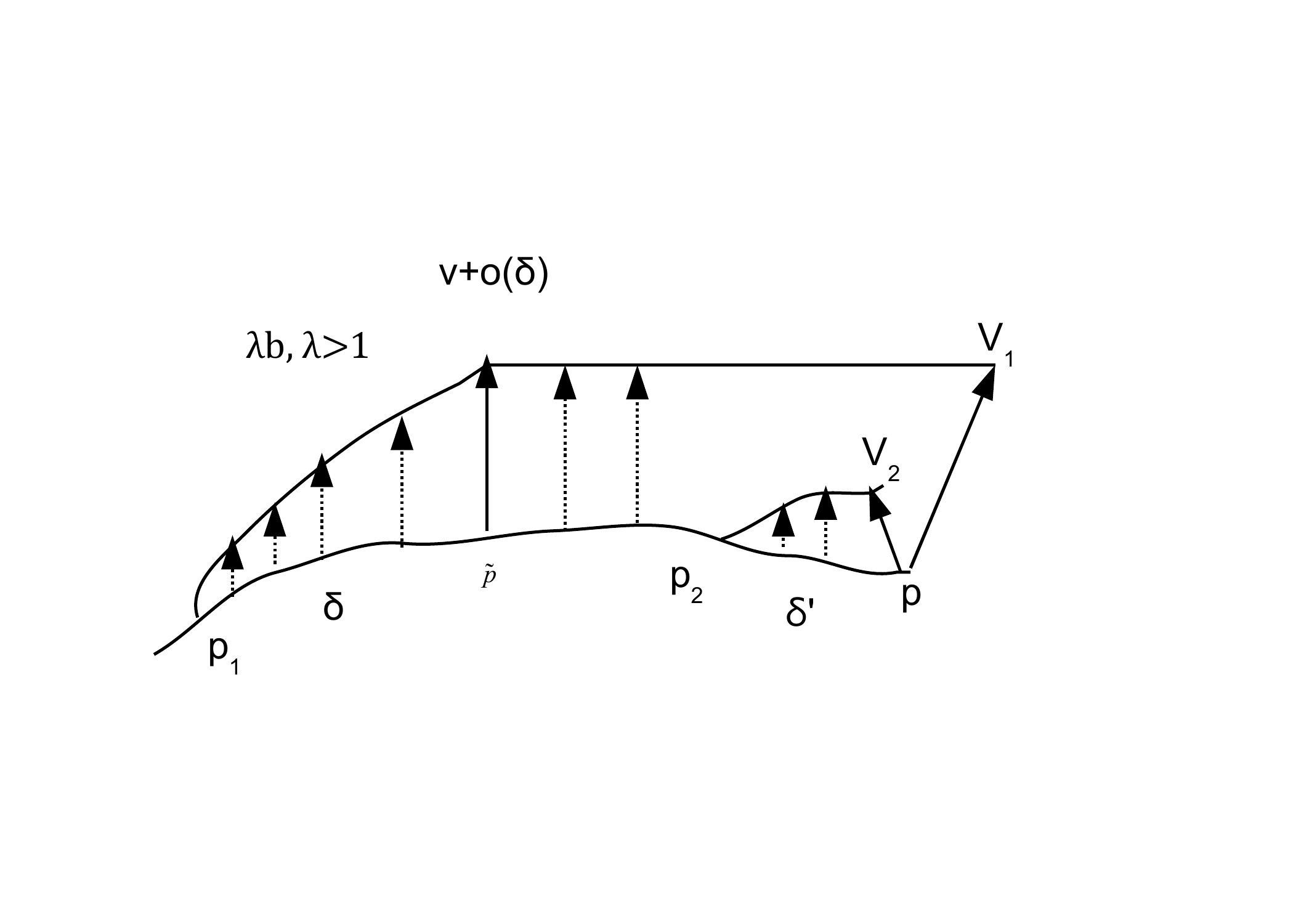}
\caption{Combination of the processes}
\label{fig8}
\end{figure}
\end{center}

\noindent
We will find a vector $Z=\lambda \xi +\mu v +\eta [\xi,v]$ which is locally Lipschitz in the $\mathcal{H}^{2}$ topology in a neighborhood of the curve $x$ (that is the topology of $\mathcal{L}_{\beta}$). It is important to follow the full construction: we start from a compact set $K$ in $\mathcal{L}_{\beta}$ endowed with the $\mathcal{H}^{2}$ topology and we deform it, by using the regularizing flow in section 2, to a compact set $\tilde{K}$ of smooth curves. This construction that follows is done on curves in $\tilde{K}$, and it is done curve by curve. It will be made continuous by using a partition of unity adapted to a covering of $\tilde{K}$ in the $\mathcal{H}^{2}$ topology: these details are in the Appendix.\\
Let us start now by detailing the construction that will be carried on four steps as mentioned above.\\
Let us consider the following system of differential equations:

\begin{equation}\label{equa 1}
\left\{\begin{array}{lll}
\dot{\lambda}=b\eta\\
\\
\dot{\mu}=(b\overline{\mu}_{\xi}-a\tau)\eta +h_{\delta}\\
\\
\dot{\eta}=b\overline{\mu}\eta +\mu a-\lambda b
\end{array}
\right.
\end{equation}

\noindent
The first equation of this system tells us that $a = \alpha(\dot{y})$ remains constant along the variation. The second equation provides us the variation of $b$. The third equation tells us that the curve stays in the space $\mathcal{L}_{\beta}$. In fact the second equation corresponds to the introduction of a small variation of $b$ using the function $h_{\delta}$ that will be chosen depending on the situation, also the initial conditions will vary with the different cases of $h_{\delta}$ that we will consider.

\subsubsection{Case $h_{\delta}\not = 0$}

\noindent
We will give here the details on how to generate the vector $v$ at a point as it was mentioned above.\\
In what follows for every $\delta > 0$, $h_{\delta}$ denote a positive function, with compact support in $(0, \delta)$ such that $h_{\delta}= 1$ in the interval $[\frac{\delta}{4},\frac{3\delta}{4}]$. We will first describe the process that will be used starting from any point $x_{0}$ in the curve, and then insert them at specific points to get the construction described above.\\
Consider the following system:

\begin{equation}\label{equa 1}
\left\{\begin{array}{llll}
\dot{\lambda}=b\eta\\
\\
\dot{\mu}=(b\overline{\mu}_{\xi}-a\tau)\eta +h_{\delta}\\
\\
\dot{\eta}=b\overline{\mu}\eta +\mu a-\lambda b\\
\\
\lambda(0)=\eta(0)=\mu(0)=0
\end{array}
\right.
\end{equation}

\noindent
As mentioned before the second equation provides us the variation of $b$. So here this will induce a change of $b$ in the positive direction. We solve this system in the interval $[0; 2\delta]$ so that there is no zero of $b$ in this interval, i.e. $\frac{1}{c} < b < c$; this is always possible since $b$ is real analytic and it is possible for all the curves in an $\mathcal{H}^{2}$ neighborhood of the curve (see section 2).\\
A study of the previous system leads us the following lemma that will be proved in the rest of this paragraph:

\begin{lemma}
Let us assume that $Z_{1}$ satisfies (\ref{equa 1}), then:

\begin{equation}
\left\{ \begin{array}{lll}
\|\lambda \|_{\infty}\leq C(\|a\|_{\infty}+\|b\|_{\infty})\delta^{2}\\
\\
\|\mu-\int_{0}^{t}h_{\delta}(s)ds\|_{\infty}\leq C(\|a\|_{\infty}+\|b\|_{\infty})\delta^{2}\\
\\
\|\eta\|_{\infty}\leq C(\|a\|_{\infty}+\|b\|_{\infty})\delta^{2}
\end{array}
\right.
\end{equation}
\end{lemma}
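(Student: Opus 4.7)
The plan is to remove the forcing term $h_\delta$ from the system by a simple substitution, so that what remains is a linear homogeneous system with a small known inhomogeneity, and then to run a bootstrap that exploits the smallness of $\delta$ on the interval $[0,2\delta]$.

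First I would set $\tilde{\mu}(t) := \mu(t) - \int_0^t h_\delta(s)\,ds$. This still satisfies $\tilde{\mu}(0)=0$, and rewrites the system as
\begin{equation*}
\dot{\lambda}=b\eta, \qquad \dot{\tilde{\mu}}=(b\overline{\mu}_\xi-a\tau)\eta, \qquad \dot{\eta}=b\overline{\mu}\eta+a\tilde{\mu}-\lambda b+a\!\int_0^t h_\delta(s)\,ds.
\end{equation*}
Note that the only place the known forcing survives is in the $\eta$-equation, as the term $a\int_0^t h_\delta$. Since $\operatorname{supp} h_\delta\subset(0,\delta)$ and $h_\delta\le 1$, on $[0,2\delta]$ we have $\left|\int_0^t h_\delta\right|\le\delta$, which is already of size $\delta$ before integration.

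Next I would set $N:=\max(\|\lambda\|_\infty,\|\tilde{\mu}\|_\infty,\|\eta\|_\infty)$ and $M:=\|a\|_\infty+\|b\|_\infty+1$ (absorbing the bounded geometric quantities $\overline{\mu},\overline{\mu}_\xi,\tau$ into a universal constant). Integrating the first two equations from $0$ with vanishing initial data immediately gives
\begin{equation*}
\|\lambda\|_\infty\le 2\delta\|b\|_\infty N, \qquad \|\tilde{\mu}\|_\infty\le C\delta M N.
\end{equation*}
For the third equation, integrating once and estimating each term separately, the crucial point is that the forcing contributes $\bigl|\int_0^t a(s)\int_0^s h_\delta(u)\,du\,ds\bigr|\le 2\delta^2\|a\|_\infty$ (a double integration gains an extra factor $\delta$), so that
\begin{equation*}
\|\eta\|_\infty\le C\delta M N+2\delta^2\|a\|_\infty.
\end{equation*}

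Combining the three inequalities gives $N\le C\delta M\cdot N+2\delta^2\|a\|_\infty$, and for $\delta$ small enough that $C\delta M<1/2$ the coupled term absorbs into the left-hand side, yielding $N\le 4\delta^2\|a\|_\infty$. This at once establishes the bounds for $\eta$ and for $\tilde{\mu}=\mu-\int_0^t h_\delta$. Re-inserting into $\dot{\lambda}=b\eta$ then gives $\|\lambda\|_\infty\le 2\delta\|b\|_\infty\cdot 4\delta^2\|a\|_\infty=O(\delta^3)$, which is a fortiori bounded by $C(\|a\|_\infty+\|b\|_\infty)\delta^2$.

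The only conceptual point (rather than routine bookkeeping) is noticing that the $\delta^2$-smallness, rather than the naive $\delta$, comes from \emph{double} integration of the forcing: once inside $\mu$ when we pass to $\tilde{\mu}$, and once more when we integrate the $\eta$-equation. All other terms are coupled and are handled by the bootstrap. The smallness of $\delta$ needed for the bootstrap is a function of $\|a\|_\infty+\|b\|_\infty$, which is the reason the constant $C$ in the statement can indeed be taken uniform on a compact family of curves in the $\mathcal{H}^2$-topology considered in Section~2.
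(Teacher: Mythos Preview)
Your proof is correct and follows essentially the same idea as the paper's: both arguments recognize that the solution stays close to $(0,\int_0^t h_\delta,0)$ and that the deviation is governed by a double integration, which produces the factor $\delta^2$. The paper packages this via the variation-of-constants formula $Z_1(t)=R(t)\int_0^t R^{-1}(s)(0,h_\delta(s),0)^T\,ds$ together with the Gronwall bound $|R(t)-\mathrm{id}|\le C\int_0^t|A|$, while you make the substitution $\tilde\mu=\mu-\int_0^t h_\delta$ explicitly and close with a direct bootstrap; these are two phrasings of the same estimate. Your smallness condition $C\delta M<1/2$ matches the paper's implicit use of the boundedness of $e^{\int_0^\delta|A|}$ when it absorbs the exponential into the constant $C$, so neither argument is more general on this point.
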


\begin{proof}
Let $A$ be the matrix of the previous differential system, that is
$$A=\left(\begin{array}{ccc}
0&0&b\\
0&0&(b\overline{\mu}_{\xi}-a\tau)\\
-b&a&b\overline{\mu}
\end{array}
\right),$$
and let $R$ denote the resolvent of the system, that is $\dot{R}=AR$ with $R(0)=id$. Then, if
$Z_{1}=\left(\begin{array}{ccc}
\lambda\\
\mu\\
\eta
\end{array}
\right)$,
we have
$$Z_{1}(t)=R(t)\int_{0}^{t}R^{-1}(s)\left(\begin{array}{ccc}
0\\
h_{\delta}(s)\\
0
\end{array}
\right)ds.$$

\noindent
Now since $\dot{R}=AR$ we have that
\begin{equation}
|R(t)-id|\leq \int_{0}^{t}|A(s)|ds+\int_{0}^{t}|A(s)||R(s)-id|ds,
\end{equation}
It follows from Gronwall's lemma that
\begin{equation} \label{est}
|R(t)-id|\leq  \int_{0}^{t}|A(s)|ds e^{\int_{0}^{t}|A(s)|ds},
\end{equation}
Hence for $t\in[0,\delta]$ we have $$|R(t)-id|\leq C \int_{0}^{t}|A(s)|ds .$$
Then
$$|Z_{1}(t)-\left(\begin{array}{ccc}
0\\
\int_{0}^{t}h_{\delta}(s)ds\\
0
\end{array}
\right)| \leq C\int_{0}^{t}\int_{0}^{s}|A(u)|du ds.$$
Therefore we deduce the result of the lemma.
\end{proof}

\noindent
According to the previous lemma we can estimate the change of $b$ and $a$ along the deformation introduced by the vector field $Z_{1}$ above. Knowing that, once extended, the evolution equations of $a$ and $b$ read as
$$\frac{\partial a}{\partial s}=\dot{\lambda}-b\eta$$
and
$$\frac{\partial b}{\partial s}=\dot{\mu}+(a\tau - \overline{\mu}_{\xi}b)\eta.$$

\noindent
We get that $a$ is unchanged and after a bootstrapping argument (see Appendix) we have $$|b(s,t)-b(t)|\leq Cs|h_{\delta}| \leq Cs.$$

\subsubsection{Case $h_{\delta}=0$}

\noindent
We now consider the same system of equations, but with $h_{\delta}= 0$ and with initial conditions non-zero, that is:

\begin{equation}\label{equa 2}
\left\{\begin{array}{llll}
\dot{\lambda}=b\eta\\
\\
\dot{\mu}=(b\overline{\mu}_{\xi}-a\tau)\eta \\
\\
\dot{\eta}=b\overline{\mu}\eta +\mu a-\lambda b\\
\\
\lambda(0)=\eta(0)=0,\; \mu(0)=1
\end{array}
\right.
\end{equation}

\noindent
This will allow us to generate a non-trivial $[\xi,v]$ component at the point $p$, with of course an extra term $r$ that needs to be removed in a later stage.\\
If $Z_{2}$ is a solution of this equation then we have $Z_{2}(t)=R(t)Z_{2}(0)$. Notice now that
$$|\big(R(t)-id-\int_{0}^{t}A(s)ds\big)Z_{2}(0)|=|\int_{0}^{t}A(s)(R(s)-id)dsZ_{2}(0)|$$
Using the estimate (\ref{est}) we have
$$|Z_{2}(t)-Z_{2}(0)-\int_{0}^{t}A(s)Z_{2}(0)|\leq C \delta^{2}(||a||_{\infty}+||b||_{\infty})^{2}$$
Therefore we have
$$\|\eta-\int_{0}^{t}a(s)ds\|_{\infty}\leq C\delta^{2}(\|a\|_{\infty}+\|b\|_{\infty})^{2}$$
and
$$\|\mu-1\|_{\infty}\leq C \delta^{2}(\|a\|_{\infty}+\|b\|_{\infty})^{2}$$
We will set
$$\theta_{\delta}=\delta^{2}(\|a\|_{\infty}+\|b\|_{\infty}), \qquad \tilde{\theta}_{\delta}=\delta^{2}(\|a\|_{\infty}+\|b\|_{\infty})^{2}$$

\subsubsection{Combination}

\noindent
Now we will use a combination of these processes starting at specific points on the curve to span the kernel of $\alpha$ at $p$. So here, given a point $p$ on the curve we will use 3 points $p_1$, $\overline{p}$ and $p_2$, and we re-parametrize our curve so that zero corresponds to the point $p_1 = x(0)$ whereas the time $\delta$ will correspond to the point $\overline{p}$, and take $p_2 = x(2\delta-\delta')$, $p = x(2\delta)$, where here $0 < \delta' << \delta$. We will provide more details about the values of $\delta$ and $\delta'$ in the sequel. Also $\delta$ does not need to be small.\\
From $p_1$ we use the construction done with (\ref{equa 1}) up to time $\delta$. Then again, use the process described by (\ref{equa 2}) starting from $\overline{p}$ with initial condition the resultant vector from the first construction, till we reach the point $p$. And to finish we run again the first process (that is using (\ref{equa 1})) starting from $p_2$ till we reach $p$ (see figure \ref{fig8}).\\
Let us see what are the vectors formed now at the point $p$. From the first and the second process we get a vector $$V_{1}=\int_{0}^{\delta}h_{\delta}(s)ds
\Big[\big(1+O(\tilde{\theta}_{\delta})+\frac{b}{a}O(\theta_{\delta})\big)v+$$
$$+\big(\int_{0}^{\delta}a(s)ds+O(\tilde{\theta}_{\delta})\big)[\xi,v]+\frac{1}{a}O(\theta_{\delta})(a\xi+bv)+
O(\delta \theta_{\delta})\Big]$$

\noindent
and from the third process, we have $$V_{2}=O(\theta_{\delta'})(a\xi+bv)+\big(\int_{0}^{\delta'}h_{\delta'}(s)ds+
O(\theta_{\delta'})\big)v+O(\theta_{\delta'})[\xi,v]$$

\noindent
Now we compute the determinant $det(P(V_1), P(V_2))$, where $P$ is the projection, on $\ker\alpha$, parallel to $a\xi + bv$, we find:
$$\left\vert \begin{array}{cc}
1+O(\tilde{\theta}_{\delta})+\frac{b}{a}O(\theta_{\delta}) & O(\tilde{\theta}_{\delta})\\
\\
\int_{0}^{\delta'}h_{\delta'}(s)ds+O(\theta_{\delta'}) & O(\theta_{\delta'})
\end{array}
\right\vert$$
The dominant term of this determinant is $$O(\theta_{\delta'})-\int_{0}^{\delta'}h_{\delta'}(s)ds\int_{0}^{\delta}a(s)ds=O(\theta_{\delta'})-\delta \delta' a_{0} +o(\delta \delta')$$
Since $\delta'<<\delta$ this determinant is bounded away from zero.\\
Now the global estimate on $b$ after extension the of the deformation vector $Z$, reads as follows:
$$|b(s,t)- b(t)|\leq Cs|h_{\delta}(t) + h_{\delta'}(t)|\leq Cs.$$

\subsubsection{Compensation of $\xi$}

\noindent
Notice that now the only part that needs compensation is the $\xi$ component. Since we extended the velocity vector of the curve to some small $\mathcal{H}^{2}$ neighborhood of the curve, by transporting $a\xi +bv$ from $p_2$, we get a non-zero $\xi$ component at $p$. Notice that this corresponds to the use of the transport map $\phi_{t(x)}(x)$ where here $t(x)$ is the necessary time to be able to compensate the given $\xi$ component. This can be made precise if we get the right estimates on the transported vector from B. Let $S$ be the section at $p$ of $ker_{\alpha}$ and $S_{2}$ a section of $\ker_{\alpha}$ at $B$. We consider also the section $\tilde{S}_{2}=D\phi_{t}(S_{2})$ the image of the section $S_{2}$ under the diffeomorphism $\phi_{t_{0}}$ where $t_{0}$ is the necessary time to reach $p$ starting from $B$. Now we want to find a way of projecting the section $\tilde{S}_{2}$ on $S$ using the diffeomorphism $\phi_{t}$. In fact, we have
$$D(\phi_{t}(p))(\cdot)=D\phi_{t}(\cdot)+dt(\cdot) (a\xi+bv)$$
evaluating at $t=0$, the previous equation reads as
$$D(\phi_{t}(p))(X)=X+dt(X)(a\xi+bv)$$
for every $X\in T_{p}M$. Therefore we can always project on $S$ by taking $dt(X)=\frac{\alpha(X)}{a}$, noticing that $dt(X)=0$ means we are already in $S$, and if $dt(X)\not =0$ then by taking $\phi_{st}(p)$ and adjusting the $s$ we can always compensate the $\xi$ component. The same procedure can be done for the section spanned by the vectors $V_{1}$ and $V_{2}$ at $p$ and projecting them on $S$ to get components free from $\xi$.\\
Now one needs to estimate the size of the component that needs to be compensated since the previous procedure corresponds to an increase or decrease in time. Hence it will change the parametrization of our curve.

\begin{center}
\begin{figure}[H]
\centering
\includegraphics[scale=.4]{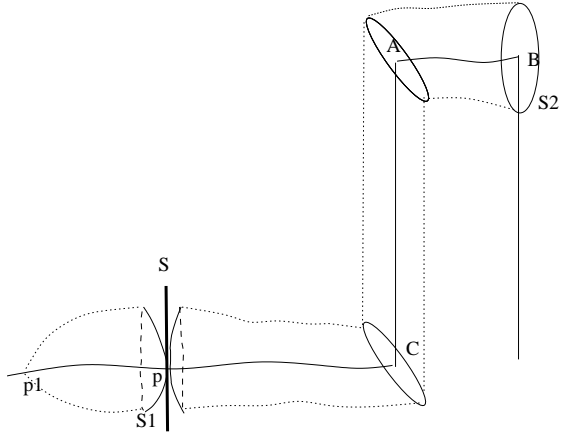}
\caption{Compensation of $\xi$}
\label{fig9}
\end{figure}
\end{center}

\noindent
Let $\varepsilon$ be the opening of the nearly Dirac mass. Since we are transporting the vector $-v$ starting from B, the transport equation is equivalent to solving

\begin{equation}\label{equa 3}
\left\{\begin{array}{llll}
\dot{\lambda}=b\eta\\
\\
\dot{\mu}=(b\overline{\mu}_{\xi}-a\tau)\eta \\
\\
\dot{\eta}=b\overline{\mu}\eta +\mu a-\lambda b\\
\\
\lambda(0)=\eta(0)=0, \; \mu(0)=-1
\end{array}
\right.
\end{equation}

\noindent
This last system behaves as (\ref{equa 2}), starting from the point $\tilde{p}$. Thus, it holds
$$\mu=-1+O(\tilde{\theta}_{\varepsilon}), \qquad \eta=\int_{0}^{\varepsilon}a(s)ds+O(\tilde{\theta}_{\varepsilon})$$
Since the transport equation is linear, we have that at $p$ all the components of the transported vector are $O(\varepsilon)$ and so, $|dt|=O(\varepsilon)$. Notice now that if the initial length is $l$ then the new length will be $l'=l+O(\epsilon)$ and therefore the rescaled $b$ is $\tilde{b}=\frac{b}{1+O(\epsilon)}$. Again this gives a final estimate on $b$ along the variation as follows:
\begin{equation}\label{est2}
|b(s,t)-b(t)|\leq CsO(\epsilon).
\end{equation}

\begin{proposition}
There exists $\varepsilon_{0} > 0$ such that if the opening of the nearly Dirac mass is $\varepsilon < \varepsilon_{0}$, then the nearly Dirac mass can be gradually removed.
\end{proposition}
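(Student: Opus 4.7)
The proof plan is to realize the gradual removal as the integral flow of the deformation vector field $Z = \lambda\xi + \mu v + \eta[\xi,v]$ assembled in Sections 3.4.1--3.4.4. The prescription is: at the apex $B$ of the back-leg of the near-Dirac mass, set $Z(B) = -v$; propagate this initial value backwards along $-\dot x$ across the $\varepsilon$-opening to the apex $A$ of the forth-leg (using the extension of $\dot x$ from Section 3.1, which is legitimate because $\dot x$ is close to $\xi$ on the opening); then propagate forward via the $v$-flow from $A$ through $C$ to the point $p$ where $a, |b| \geq c > 0$. At $p$ the transported vector has unwanted $v$, $[\xi,v]$ and $\xi$ components, and they are killed by inserting the two processes of 3.4.1--3.4.2 at the auxiliary points $p_1, \bar p, p_2$ (producing vectors $V_1, V_2$ whose projections parallel to $\dot x$ span $\ker\alpha$ at $p$, by the non-vanishing determinant computation of 3.4.3), followed by the time reparametrization of 3.4.4 to annihilate the residual $\xi$ component.

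Next I would check that the triple $(\lambda,\mu,\eta)$ so constructed solves (\ref{equa 1}) with data depending in a locally Lipschitz manner on the curve in the $\mathcal H^{2}$ topology, so that the Cauchy--Lipschitz theorem gives a local flow $s \mapsto x_s \in \mathcal L_\beta$. Along this flow the first equation of (\ref{equa 1}) gives $\partial_s a = 0$, so the pointwise value of $a$ is preserved and in particular $x_s$ stays in $\mathcal C_\beta^{+}$ throughout; the $b$-estimate (\ref{est2}) gives $|b(s,t) - b(t)| \leq C s \varepsilon$ on the entire curve. Since $Z(B) = -v$, the apex $B$ travels at unit speed along $-v$ and both $v$-legs shrink at unit rate, so $l(s) = l_0 - s$ and the nearly Dirac mass is cancelled at $s = l_0$; writing the rescaled parameter $\sigma = s/l_0 \in [0,1]$ provides the continuous homotopy required.

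To make this argument complete the flow must actually extend up to $s = l_0$, and this demands that the geometric setup be preserved for every $s \in [0, l_0]$: the lower bounds $a > 0$ and $|b| > c$ on neighborhoods of $p_1, \bar p, p_2, p$, the non-degeneracy of the determinant $\det(P(V_1), P(V_2))$ of 3.4.3, and the smallness of the $\xi$-component at $p$ that the time reparametrization of 3.4.4 must cancel. This is exactly where the smallness of $\varepsilon$ enters: by (\ref{est2}) and the invariance of $a$, all of these quantities drift by at most $O(\varepsilon)$ over the time interval $[0,l_0]$, so choosing $\varepsilon_0$ small enough, in terms of $c$, $l_0$ and the quantitative constants appearing in 3.4.3--3.4.4, keeps them in the regime where the compensation construction is valid uniformly in $s$. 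I expect the main obstacle to be precisely this uniform quantitative control: the $\xi$-compensation and the spanning property at $p$ are implicit-function-theorem constructions whose radius of applicability degrades as the determinant or the lower bounds on $a, b$ degrade, and establishing that the corresponding thresholds are not crossed before $s = l_0$ is what forces the $\varepsilon < \varepsilon_0$ hypothesis and delivers the proposition.
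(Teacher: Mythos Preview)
Your proposal is correct and follows essentially the same route as the paper. The paper's own proof is considerably terser: it isolates the single quantity that can fail, namely the bound $\tfrac{1}{c}<b<c$ on the compensation interval $[0,2\delta]$ between $p_1$ and $p$, plugs in the estimate $|b(s,t)-b(t)|\le Cs\varepsilon$ from (\ref{est2}), and reads off the explicit threshold $\varepsilon_0=\min(cCl,\tfrac{1}{2cCl})$ that keeps $\tfrac{1}{2c}\le b(s,t)\le 2c$ for all $s\in[0,l]$; your additional concerns about the determinant and the $\xi$-compensation are implicitly subsumed by this $b$-control together with $\partial_s a=0$.
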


\begin{proof}
Recall that from the previous construction, $b$ will only change in the portion $[0, 2\delta]$ between $p_1$ and $p$. In that region we have that $\frac{1}{c} < b < c$, hence from the estimate (\ref{est2}) we have
$$\frac{1}{c}-Cs\varepsilon \leq b(s,t) \leq c+Cs\varepsilon.$$
Therefore, given a nearly Dirac mass of length $l$, if we take $\varepsilon < min(cCl, \frac{1}{2cCl})$ we find that $$\frac{1}{2c}\leq b(t,s)\leq 2c$$
for $s\in [0, l]$. Thus the process can be completed and the nearly Dirac mass can be removed with a control on the number of zeros of $b$.
\end{proof}

\noindent
After this compensation is done, we can see that this process will cancel the nearly Dirac mass, in fact if we let $Z$ the deformation vector built in the previous construction, then if we start by $-v$ at B, it is enough to check the behaviour of $\int b$. We have
$$Z\cdot \int b = \int \dot{\mu}+(a\tau-\overline{\mu}_{\xi})\eta$$
By splitting the integral into two pieces we see that:\\

from D to B we have $\eta=0$ hence $Z\cdot \int_{[DB]}b=-1$; \\

from B to A we have $Z\cdot b =0$ hence  $Z\cdot \int_{[BA]}b=0$.

\begin{proposition}
Let $l$ be the length of the nearly Dirac mass, then if $l$ tends to zero, the deformation tends to the identity.
\end{proposition}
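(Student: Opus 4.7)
The plan is to exploit the linearity of the whole cancellation construction with respect to the length $l$ of the nearly Dirac mass. Concretely, I will show that the deformation vector $Z=\lambda\xi+\mu v+\eta[\xi,v]$ built in the previous subsections is of size $O(l)$ in the $C^{0}$ topology, uniformly along the curve, so the flow it generates tends to the identity as $l\to 0$.

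First I would track the $l$-dependence of every ingredient in the construction. The starting vector at $B$, as remarked in the text, should be taken as $-lv$ rather than $-v$ so as to be adapted to the size of the Dirac mass. Because the transport equation (\ref{equa 3}) from $B$ to $A$ is linear in the initial datum, the resulting vector at $A$ is exactly $l$ times the vector obtained from the $(0,-1,0)$ initial condition; hence it is $O(l)$. The subsequent $v$-transport from $A$ to $C$ requires an adjustment $\delta s$ of the $v$-length that is itself $O(l)$, and the further transport to $p$ gives again a vector of size $O(l)$ in $\ker\alpha$, with its $\xi$-component also of that size.

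Second I would rescale the compensation. Since the projected determinant $\det(P(V_{1}),P(V_{2}))=-\delta\delta' a_{0}+o(\delta\delta')+O(\theta_{\delta'})$ is bounded away from zero independently of $l$ (the points $p_{1},\overline{p},p_{2},p$ and the numbers $\delta,\delta'$ are chosen depending only on the base curve), the linear combination of $V_{1}$ and $V_{2}$ needed to cancel the $\ker\alpha$-component transported to $p$ has coefficients $O(l)$. Rescaling $h_{\delta}$ and the initial datum $\mu(0)$ in (\ref{equa 1}) and (\ref{equa 2}) by the corresponding $O(l)$ factor, Lemma~3.1 and the estimates in Section~3.4.2 immediately give $\|\lambda\|_{\infty}+\|\mu\|_{\infty}+\|\eta\|_{\infty}=O(l)$ on the supports $[0,2\delta]$ and near $p$. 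The final $\xi$-compensation by a time shift is implemented by $dt(X)=\alpha(X)/a$ evaluated on an $O(l)$ vector, and so it is itself $O(l)$.

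Putting these estimates together, the deformation vector $Z$ built in Section 3.4 satisfies $\|Z\|_{C^{0}}\leq C\,l$ uniformly in the base curve (and in the $\mathcal{H}^{2}$-neighborhood in which $Z$ is Lipschitz, via the partition of unity of the Appendix). The generated flow $\Phi_{s}$ therefore satisfies $d(\Phi_{s}(x),x)\leq C s l$, so as $l\to 0$ the family of deformations converges uniformly to the identity, which is exactly the stated proposition. The main technical point to verify carefully is that the geometric data parametrizing the construction (the points $p_{1},\overline{p},p_{2},p$, the sections $S$, $S_{2}$, the scales $\delta,\delta'$) can be chosen independently of $l$ on a neighborhood of the original curve; once this is guaranteed, the linearity argument above closes the proof.
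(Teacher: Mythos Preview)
Your argument is correct and follows essentially the same route as the paper: both observe that replacing the starting vector $-v$ at $B$ by $-lv$ and using the linearity of the transport equations yields a deformation vector $\tilde Z=lZ$, hence a deformation tending to the identity as $l\to 0$. Your version is more explicit in checking that the compensation coefficients and the $\xi$-time-shift inherit the $O(l)$ scaling (via the $l$-independent lower bound on the determinant), whereas the paper compresses this into the single observation that the whole construction is linear in the initial datum.
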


\begin{proof}
One has to notice that, the previous construction was made regardless of the length of the nearly Dirac mass, and this can be adapted: instead of transporting $-v$ from the point B, we start by transporting $-lv$. Since the deformation was made using linear differential equations, one has that the new deformation vector is $\tilde{Z}(x)=l(x)Z(x)$, hence if $l$ tends to zero, the deformation tends to identity.
\end{proof}

\subsection{Case of a double zero}

\noindent
In this case we will consider two nearly Dirac masses, that is $3$ $v$-pieces, that might converge to a single jump. First thing to notice is that we can do our construction and build the deformation vector in two different ways, but we can convex combine them since they have independent supports assuming that the length of each intermediate piece is less that the $\varepsilon_{0}$ that we took in the case of a single nearly Dirac mass.

\begin{center}
\begin{figure}[H]
\centering
\includegraphics[scale=.4]{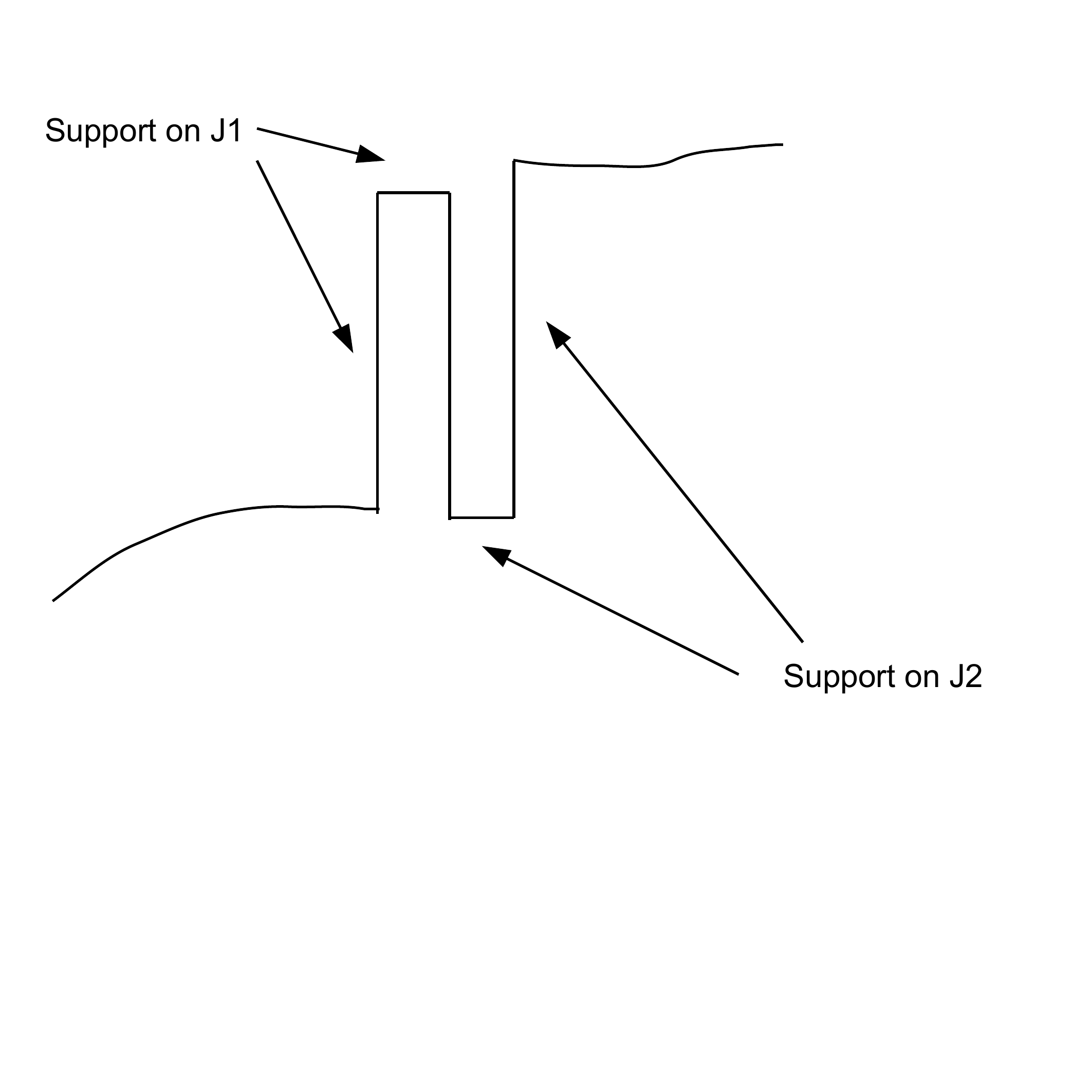}
\caption{Case of a double zero}
\label{fig10}
\end{figure}
\end{center}

\noindent
Hence, the two procedures can be run together without interfering, leading to a case where we have two positive (or negative jumps) linked by a piece of curve. Hence, we can convex combine them to end up with a step-like curve that moves along the convex combination between the two extremal parts that are curves
with a single $v$ jump.

\begin{center}
\begin{figure}[H]
\centering
\includegraphics[scale=.4]{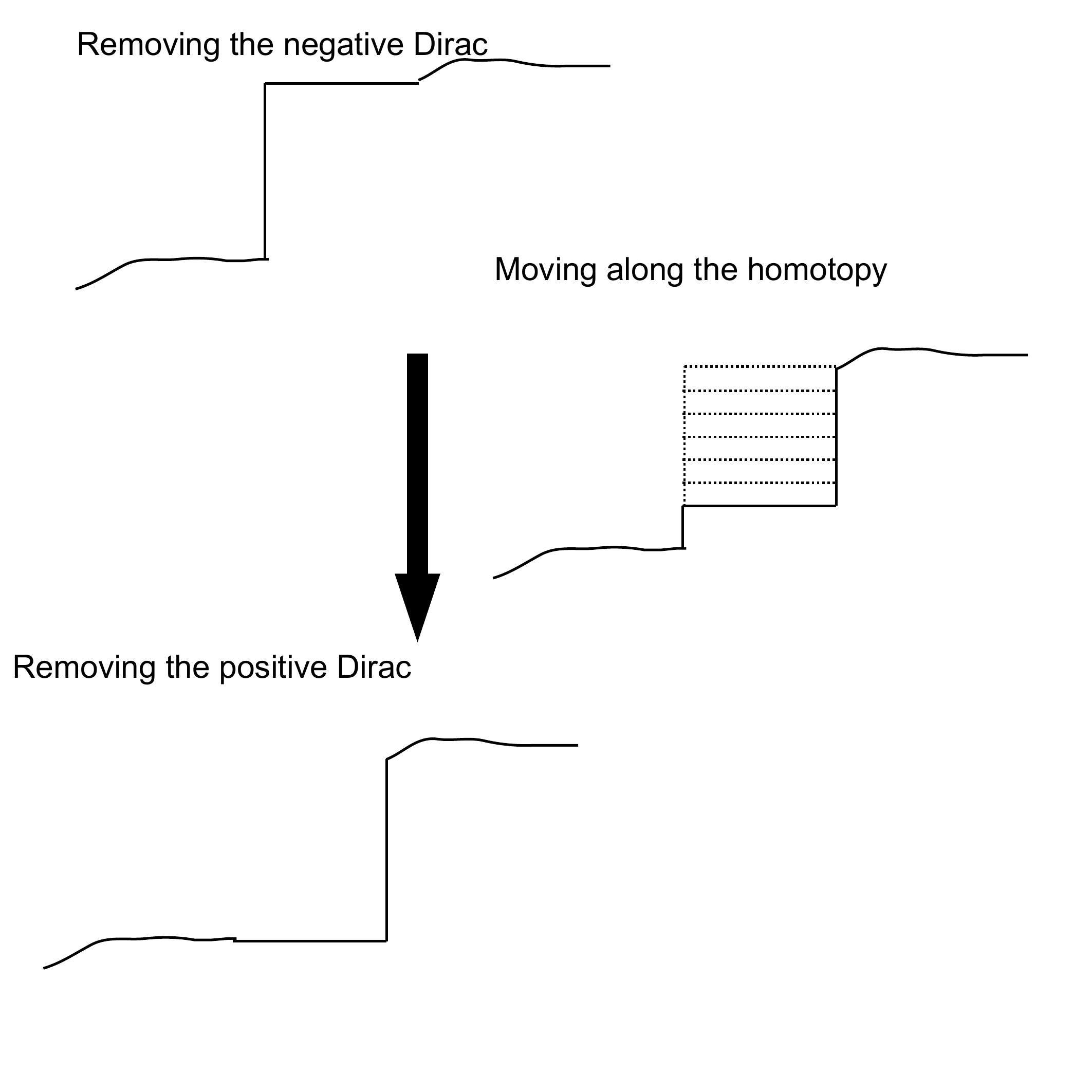}
\caption{Convex combination of the two process}
\label{fig11}
\end{figure}
\end{center}

\noindent
Another way to do this (which will be useful in the case of large multiplicity) is to build the deformation vector starting from one nearly Dirac mass and then crossing the other to finish the compensation from the other side (see figure \ref{fig12}).\\
In this case we need $\varepsilon_{1}+\varepsilon_{2}<\varepsilon_{0}$ and since the construction can be made from
both sides, they can be superposed since in the common support, it is just a transport equation that conserves all the quantities and hence they can be convex combined to get the same result as mentioned above.

\begin{center}
\begin{figure}[H]
\centering
\includegraphics[scale=.4]{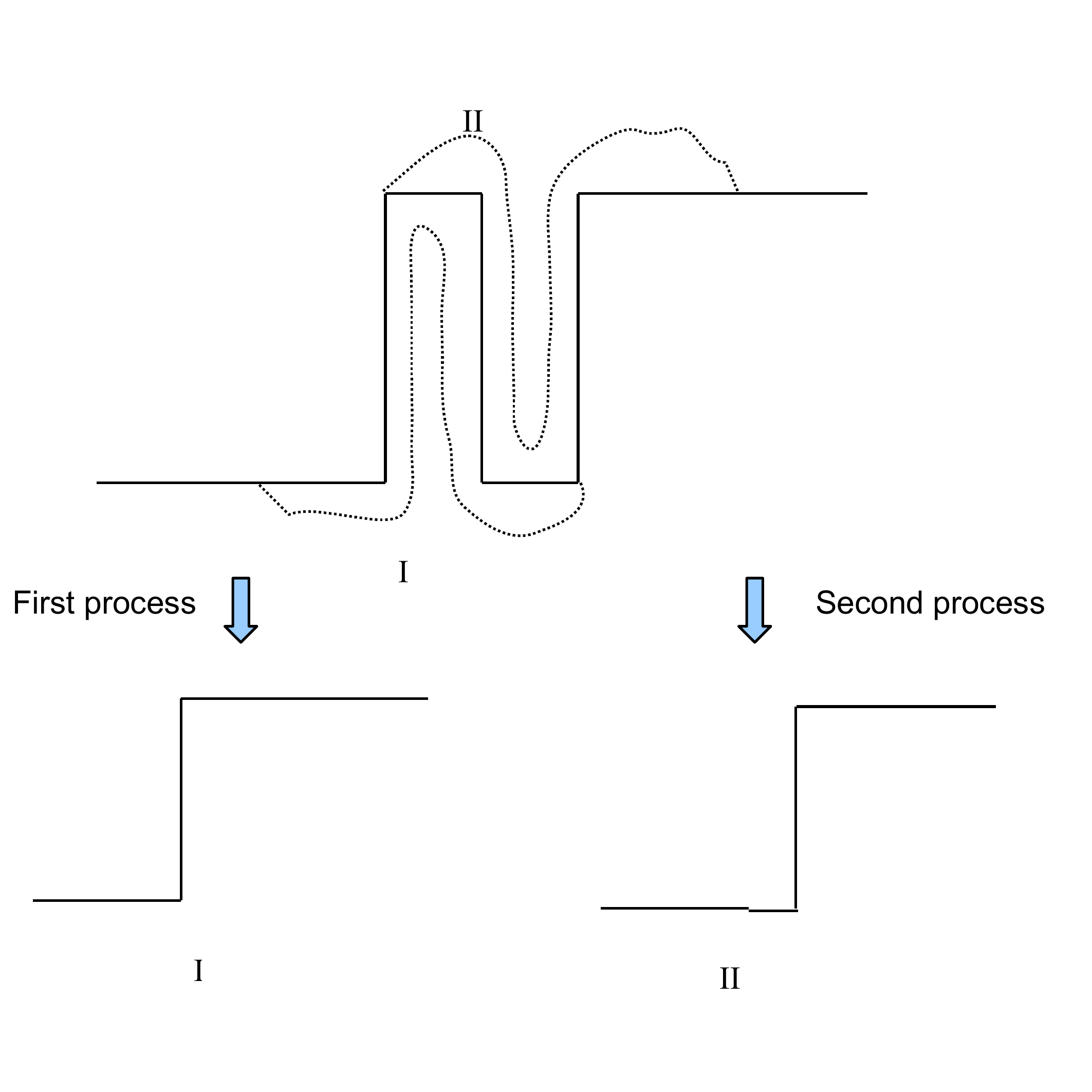}
\caption{Second method}
\label{fig12}
\end{figure}
\end{center}

\subsection{Case of large multiplicity}

\noindent
To clarify the construction let us first take a zero of order 3.\\
If we assume that, $\sum_{i=1}^{k}\varepsilon_{i}<\varepsilon_{0}$ then we can remove the nearly Dirac masses by
building the decreasing vector on the sides (as in figure \ref{fig14}).

\begin{center}
\begin{figure}[H]
\centering
\includegraphics[scale=.4]{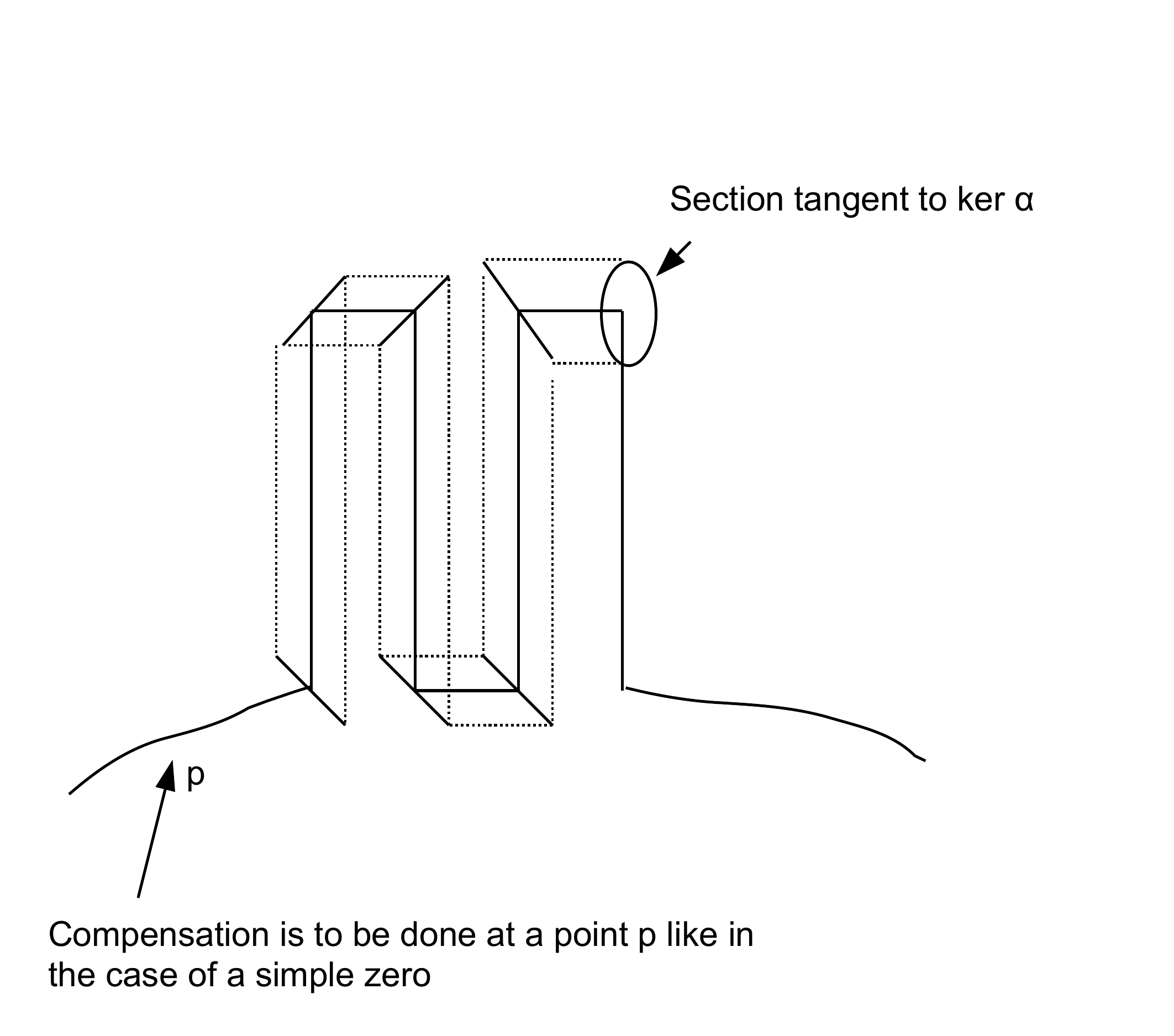}
\caption{The cancellation of multiple Dirac masses}
\label{fig14}
\end{figure}
\end{center}

\noindent
Hence they can be convex combined to lead to a situation of multiple positive $v$ jumps linked by small pieces, as shown in the following figure \ref{fig16}.\\

\begin{center}
\begin{figure}[H]
\centering
\includegraphics[scale=.4]{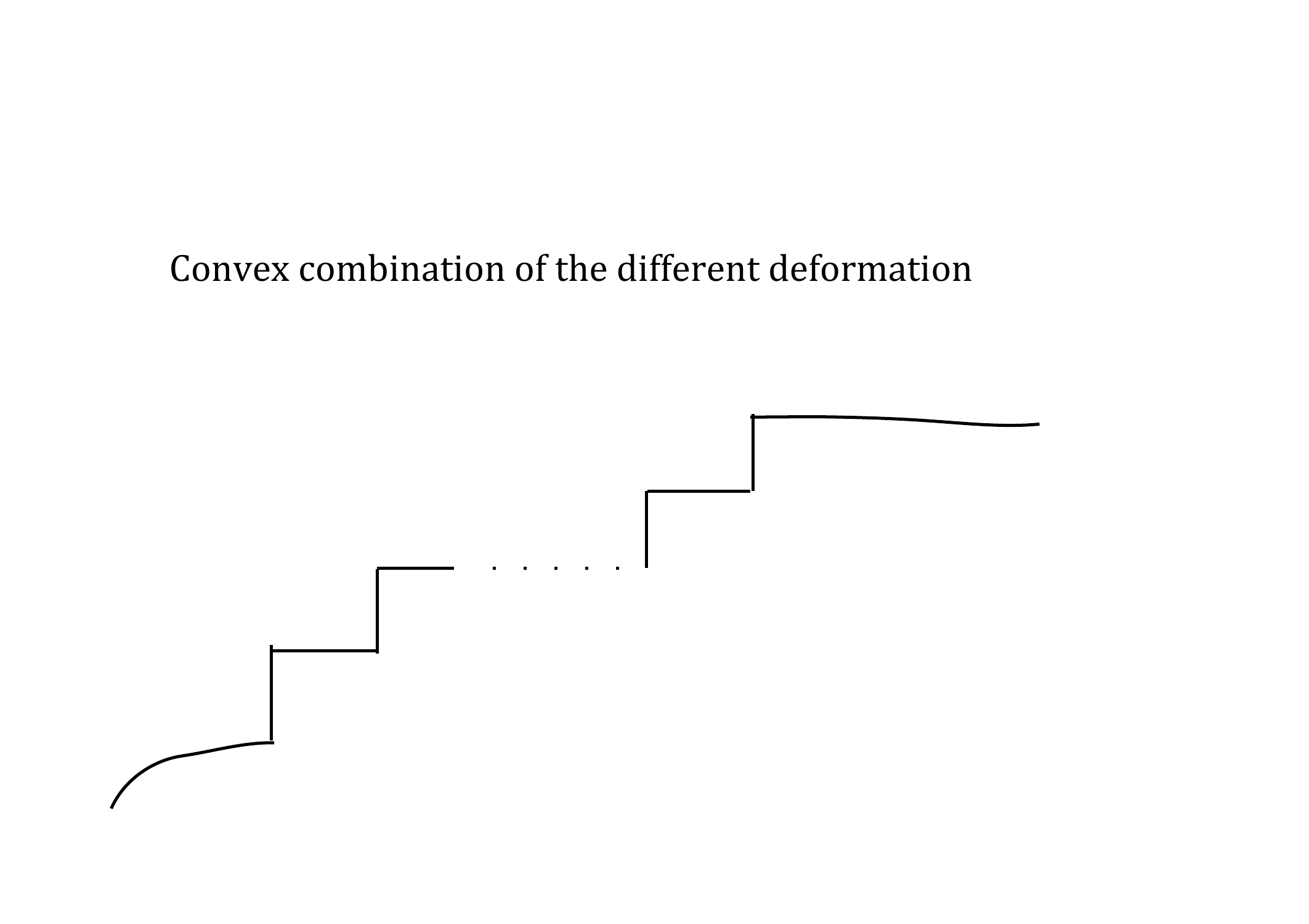}
\caption{Multiple convex combinations.}
\label{fig16}
\end{figure}
\end{center}

\section{``Pushing'' in $\mathcal{C}_{\beta}$}

\noindent
After Cancelling the singularities that appears during the lifting process, we end-up with curves in $\mathcal{C}_{\beta}^{+}$ having consistent pieces with $a>0$. In this section we will proceed with the final step which consists of pushing curves from $\mathcal{C}_{\beta}^{+}$ into $\mathcal{C}_{\beta}$ continuously. This again will be done by the use of a flow that is constructed in a similar way as in section 2. Indeed, we will construct a flow that induces a heat type flow on the component along $\xi$ and by the use of a result of S. Angenent \cite{An}, we will see that after small time, the curves will be deformed into ones in $\mathcal{C}_{\beta}$.

\begin{center}
\begin{figure}[H]
\centering
\includegraphics[scale=.4]{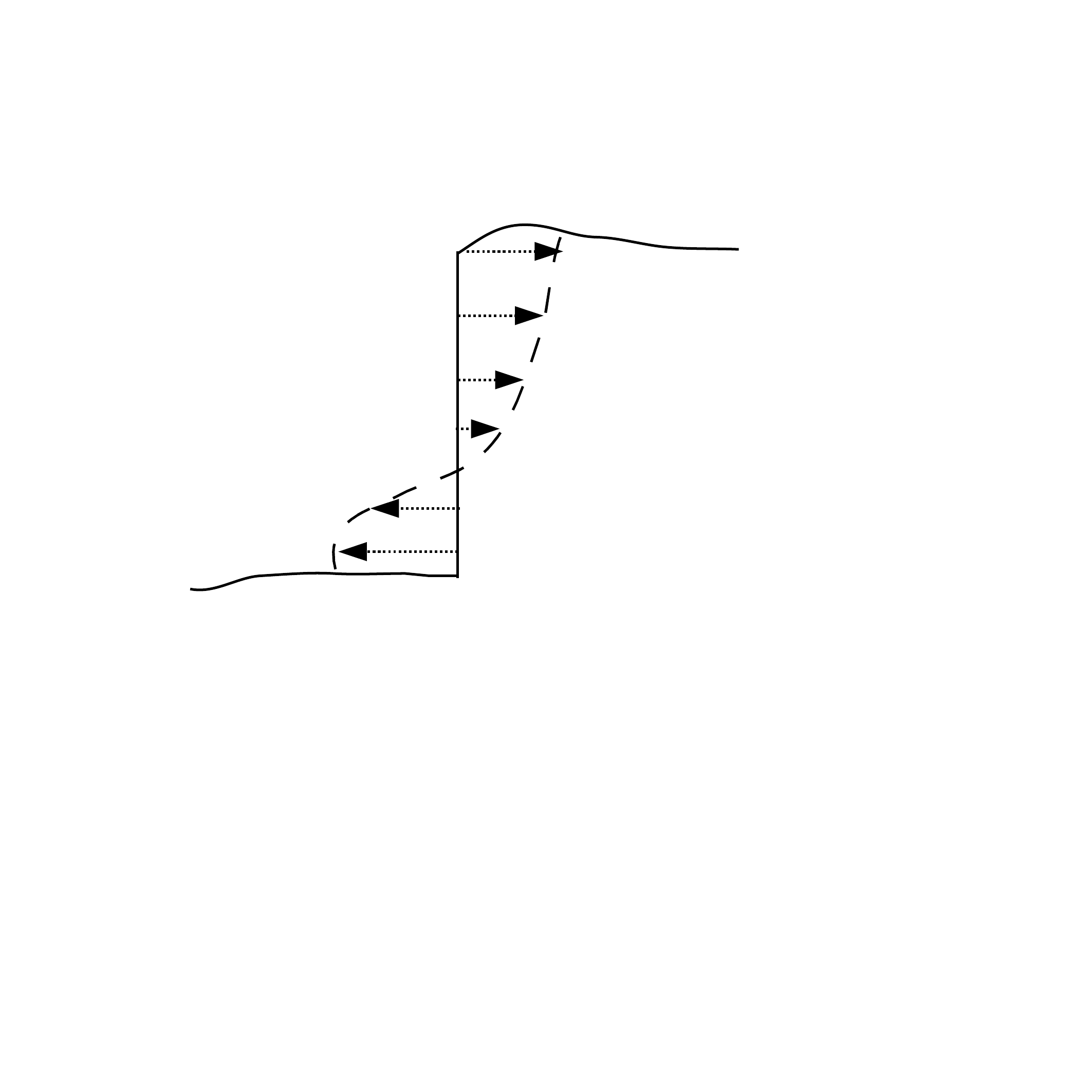}
\caption{Removing the $v$ pieces}
\label{fig15}
\end{figure}
\end{center}

\noindent
First let us recall that we are deforming a curve $x$ in $\mathcal{L}_{\beta}$, that is $\dot{x}=a\xi+bv$, along a vector field $Z=\lambda \xi +\mu v +\eta [\xi,v]$, and we have:
\begin{equation}\label{eq var}
\left\{ \begin{array}{ll}
\displaystyle\frac{\partial a}{\partial s}=\dot{\lambda}-b\eta\\
\\
\displaystyle\frac{\partial b}{\partial s}=\dot{\mu}+(a\tau-\overline{\mu}_{\xi}b)\eta
\end{array}
\right.
\end{equation}

\noindent
We will assume in what follows that $a$ is not identically zero. That is we do not consider periodic orbits of $v$. In fact this can be always assumed after deforming our compact set of curves using the vector field $Z$ constructed in Appendix B.\\
Now we will focus on the first equation of (\ref{eq var}), that is the evolution of $a$. So in this part we take $\lambda=\dot{a}+f$, $\mu=\dot{b}$ and $\eta$ satisfying the usual equation of $\mathcal{L}_{\beta}$, i.e $\dot{\eta}=\overline{\mu}b\eta+\mu a -\lambda b$. Hence: $$\eta=e^{\int_{0}^{t}b(u,s)\overline{\mu}(u)du}
\int_{0}^{t}e^{-\int_{0}^{r}b(u,s)\overline{\mu}(u)du}((a\dot{b}-\dot{a}b)-bf)(r,s)dr.$$
So if we look at the evolution of $a$, we get
$$\frac{\partial a}{\partial s}=\ddot{a}+\dot{f}-b\eta$$
Therefore, if we can find a function $f$ such that $\dot{f}-b\eta=ah$, we can insure the positivity of $a$ starting from a non-negative initial data (as it will be explained later on). But this is equivalent to solving the linear non-homogenous integro-differential equation
$$\dot{f}+be^{\int_{0}^{t}b(u,s)\overline{\mu}(u)du}\int_{0}^{t}e^{-\int_{0}^{r}b(u,s)\overline{\mu}(u)du}bfdr=$$
$$=be^{\int_{0}^{t}b(u,s)\overline{\mu}(u)du}\int_{0}^{t}e^{-\int_{0}^{r}b(u,s)\overline{\mu}(u)du}(a\dot{b}-\dot{a}b) dr+ah$$

\noindent
Notice that we need to find a periodic solution to this equation, so we define the operator $K$ on the space $C_{per}([0,1])$ in the following way: $$K(f)(t)=\int_{0}^{t}b[-\int_{0}^{l}e^{\int_{r}^{l}b(u,s)\overline{\mu}(u)du}bfdr+\int_{0}^{l}e^{\int_{r}^{l}b(u,s)\overline{\mu}(u)du}(a\dot{b}-\dot{a}b) dr]+ah dl$$
Since we want periodicity, we will take $h=c(f)a$ where $c(f)$ satisfies  $$\int_{0}^{1}b[-\int_{0}^{l}e^{\int_{r}^{l}b(u,s)\overline{\mu}(u)du}bfdr+
\int_{0}^{l}e^{\int_{r}^{l}b(u,s)\overline{\mu}(u)du}(a\dot{b}-\dot{a}b)dr] dl=c(f)\int_{0}^{1}a^{2}(l)dl$$
Notice that $c(f)$ is an affine function of $f$, thus $c(f)=c_{1}(f)+c_{2}$. Therefore the final form of the operator $K$ is $$K(f)=\int_{0}^{t}\int_{0}^{l}e^{\int_{r}^{l}b(u,s)\overline{\mu}(u)du}(a\dot{b}-\dot{a}b) dr+c_{2}a^{2}dl +T(f)(t),$$
where $T(f)$ is the bounded linear operator on $C([0,1])$ defined by $$T(f)(t)=\int_{0}^{t}-b\int_{0}^{l}e^{\int_{r}^{l}b(u,s)\overline{\mu}(u)du}bfdr+c_{1}(u)a^{2}dl$$
So the problem now is reduced to find a fixed point for the operator $K$. For that we will use the contraction mapping theorem for an iterate of $K$. The main estimate that is needed reads as
$$\|K^{n}(f_{1})-K^{n}(f_{2})\|\leq \frac{\|T\|^{n}}{n!}\|f_{1}-f_{2}\|$$
where $\|\cdot\|$ stands for the $L^{\infty}$ norm.\\
Thus we have the existence and the uniqueness of $f$ and this leads to the diffusion equation
$$\frac{\partial a}{\partial s}=\ddot{a}+ca^{2}$$
To be more precise about the existence of this flow, one should follow the same procedure as in section 2. That is, we need to regularize the coefficients of the deformation vector to get classical existence, then we need to show that indeed we have convergence to a flow on the curves. Since the procedure is similar to that in section 2, we will omit it.\\
Now we refer to the work of Angenent \cite{An}, about the zeros of parabolic equations of the form
$$\displaystyle\frac{\partial a}{\partial s}=\ddot{a}+g_{1}\dot{a}+g_{2}a$$
We know that the number of zeros of $a$ is non-increasing and if we have $a(s,t_{0})=\dot{a}(s,t_{0})=0$ then the flow will move toward the direction canceling the zero. In our case all the curves in $\mathcal{C}_{\beta}^{+}$ have $a\geq 0$, hence if $a$ is not identically zero then along the flow $a$ will become strictly positive: that is $a(s,t)>0$ for $s>0$.

\section*{Appendix A. Extension of the deformation vector}

\noindent
In this appendix we will see how we can extend the vector field constructed in section 3 to a global deformation on $\mathcal{C}_{\beta}^{+}$.\\
Before we start our extension, let us recall how one can compute the evolution of the tangent to a curve along a deformation vector. We consider here a curve $x\in \mathcal{H}^{2}(S^{1},M)$ such that
$$\dot{x}=a\xi +bv +c [\xi,v]$$
and we also consider a vector field
$$Z=\lambda \xi + \mu v +\eta [\xi,v]$$

\begin{proposition}
Let us assume that $x$ evolves under the flow of $Z$, that is
$$\frac{\partial x}{\partial s}=Z(x),$$
then the following hold:\\

(i) $\displaystyle\frac{\partial a}{\partial s}=\dot{\lambda}-\eta b+\mu c$,\\

(ii) $\displaystyle\frac{\partial b}{\partial s}=\dot{\mu}+\eta (\tau a-\overline{\mu}_{\xi}b)+c(\overline{\mu}_{\xi} \mu-\lambda \tau)$\\

(iii) $\displaystyle\frac{\partial c}{\partial s}=c \mu \overline{\mu}+\dot{\eta}-\overline{\mu}b\eta +\mu a - \lambda b$\\

\end{proposition}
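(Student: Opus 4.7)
The proof is a direct computation based on the commutativity of mixed partial derivatives. Since the two-parameter map $(s,t)\mapsto x(s,t)$ satisfies $dx(\partial_s) = Z$ and $dx(\partial_t) = \dot x$, in any local chart of $M$ one has $\partial_s\partial_t x = \partial_t\partial_s x$, i.e., $\partial_s \dot x = \partial_t Z$ as vectors in $T_{x(s,t)}M$. The plan is to expand both sides in the frame $\{\xi, v, [\xi,v]\}$ and read off the component equations.

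Writing $\dot x = a\xi + bv + c[\xi,v]$ and differentiating componentwise in $s$ yields
\[
\partial_s \dot x = (\partial_s a)\xi + (\partial_s b)v + (\partial_s c)[\xi,v] + a\,Z(\xi) + b\,Z(v) + c\,Z([\xi,v]),
\]
where $Z(\xi)^j := \sum_k Z^k \partial_k\xi^j$, and analogously for $Z(v)$, $Z([\xi,v])$; a symmetric expansion holds for $\partial_t Z$ with $\dot x$ replacing $Z$. Equating the two and pairing the cross terms $\lambda\dot x(\xi) - aZ(\xi)$, $\mu\dot x(v) - bZ(v)$, $\eta\dot x([\xi,v]) - cZ([\xi,v])$, each pair simplifies through the coordinate identity $X(Y^j) - Y(X^j) = [X,Y]^j$ into a Lie bracket of the frame vectors with bilinear coefficients in $(a,b,c)$ and $(\lambda,\mu,\eta)$: respectively $(\mu a - \lambda b)[\xi,v]$, a scalar multiple of $[\xi,[\xi,v]]$, and a scalar multiple of $[v,[\xi,v]]$.

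The structural brackets are then resolved in the frame using data already recorded in the paper. From the quoted Bahri lemma, $[\xi,[\xi,v]] = -\tau v$; and $[v,[\xi,v]] = A\xi + Bv + C[\xi,v]$ has its coefficients fixed by the dualities between $\alpha$, $\beta$, and the Reeb fields. Specifically, $\beta([\xi,v])$ is determined by $w = -[\xi,v] + \overline\mu\xi$ together with $\beta(w)=1$, which combined with $\beta([v,[\xi,v]]) = \overline\mu$ gives the $[\xi,v]$-coefficient $C$; the coefficient $A$ comes from $\alpha([v,[\xi,v]])$ via Cartan's formula (using $\alpha(v) = \alpha([\xi,v]) = 0$); and the $v$-coefficient $B$ is $\overline\mu_\xi$, obtained by differentiating the defining relation of $\overline\mu$ along $\xi$ (equivalently, via the Jacobi identity on $[\xi,[v,[\xi,v]]]$). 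Substituting these decompositions into the identity $\partial_s \dot x = \partial_t Z$ and reading off coefficients along $\xi$, $v$, and $[\xi,v]$ produces (i), (ii), and (iii). The main obstacle is the careful bookkeeping of sign conventions in the decomposition of $[v,[\xi,v]]$, since the coefficients $A, B, C$ depend on several dualities and on the orientation normalization $\alpha\wedge d\alpha = \beta\wedge d\beta$; once these are correctly installed, the three formulas drop out bilinearly from the frame expansion.
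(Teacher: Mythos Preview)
Your approach is correct and leads to the same formulas, but the paper organizes the computation differently. Rather than expanding $\partial_s\dot x = \partial_t Z$ directly in the frame $\{\xi,v,[\xi,v]\}$ and then resolving the structural brackets, the paper works with the dual coframe: it writes $a = \alpha(\dot x)$, $b = \gamma(\dot x)$ with $\gamma := -d\alpha(\cdot,[\xi,v])$, and $c = -\beta(\dot x)$, and then uses the pullback identity $\partial_s(\omega(\dot x)) = d\omega(Z,\dot x) + \partial_t(\omega(Z))$ for each of $\omega = \alpha,\gamma,\beta$. This reduces everything to evaluating $d\alpha$, $d\gamma$, $d\beta$ on pairs of frame vectors via Cartan's formula, and never requires assembling the full decomposition of $[v,[\xi,v]]$ in the frame. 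Your route is equally valid; it simply front-loads the bracket algebra (and, as you correctly flag, the sign bookkeeping for $[v,[\xi,v]]$) instead of distributing it across three exterior-derivative computations.

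One small imprecision in your write-up: the three ``pairs'' $\lambda\,\dot x(\xi) - a\,Z(\xi)$, $\mu\,\dot x(v) - b\,Z(v)$, $\eta\,\dot x([\xi,v]) - c\,Z([\xi,v])$ do not individually collapse to single Lie brackets. Each pair expands bilinearly into two directional-derivative terms (for instance the first gives $(\lambda b - a\mu)D_v\xi + (\lambda c - a\eta)D_{[\xi,v]}\xi$), and only after cross-matching terms \emph{across} all three pairs do the combinations $D_XY - D_YX = [X,Y]$ assemble into the three brackets $(\mu a - \lambda b)[\xi,v]$, $(a\eta - \lambda c)[\xi,[\xi,v]]$, $(b\eta - \mu c)[v,[\xi,v]]$. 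The outcome is exactly what you state, but the mechanism is a regrouping across the pairs rather than a simplification within each.
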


\begin{proof}
(i) Notice that $a=\alpha(\dot{x})$, hence
$$\frac{\partial}{\partial s}a=Z\cdot a =(Z\cdot \alpha)(\dot{x})+\alpha (Z\cdot \dot{x})=$$
$$=d\alpha(Z,\dot{x})+\alpha(\dot{Z})=\dot{\lambda}-\eta b + \mu c$$

\noindent
(ii) We consider the 1-form $\gamma(\cdot)=-d\alpha(\cdot,[\xi,v])$ so we have $b=\gamma(\dot{x})$. Therefore
$$\frac{\partial}{\partial s}b=d\gamma(Z,\dot{x})+\gamma(\dot{Z}).$$
Now
$$d\gamma(Z,\dot{x})=(\lambda b -\mu a)d\gamma(\xi,v)+(\lambda c -\eta a)d\gamma(\xi,[\xi,v])+(\mu c -b \eta)d\gamma(v,[\xi,v]),$$
but
$$d\gamma(\xi,v)=\xi \gamma(v)-v \gamma(\xi)-\gamma([\xi,v])=0,$$
$$d\gamma(\xi,[\xi,v])=-\gamma([\xi,[\xi,v]])=\tau$$
and
$$d\gamma(v,[\xi,v])=-\gamma([v,[\xi,v]])=d\alpha([v,[\xi,v]],[\xi,v])=\overline{\mu}_{\xi}.$$
Thus
$$\frac{\partial}{\partial s}b=\dot{\mu}+\eta (-\overline{\mu}_{\xi}b+\tau a)+c(-\lambda \tau +\overline{\mu}_{\xi} \mu).$$

\noindent
(iii) Here $c=-\beta(\dot{x})$, therefore
$$\frac{\partial}{\partial s}c=-d\beta(Z,\dot{x})-\beta(\dot{Z})=$$
$$=-(\lambda b -\mu a)d\beta(\xi,v)-(\lambda c- \eta a)d\beta(\xi,[\xi,v])-(c\mu-\eta b)d\beta(v,[\xi,v]).$$
A similar computation to the one in (ii) shows that
$$d\beta(\xi,v)=1, \qquad d\beta(\xi,[\xi,v])=0$$
and
$$-d\beta(v,[\xi,v])=d\alpha(v,[v,[\xi,v]])=\overline{\mu}$$
Hence
$$\frac{\partial}{\partial s}c=\dot{\eta}-(\lambda b-\mu a)-\overline{\mu} \eta b+c \mu \overline{\mu}$$
\end{proof}

\noindent
Given a curve $x\in \tilde{K}\cap \mathcal{C}_{\beta}^{+}$, where $\tilde{K}$ is the image of the compact set $K\subset\mathcal{L}_{\beta}$ under the regularizing flow constructed in section 2, such that $\dot{x}=a\xi+bv$. The construction of the vector in section 3 depends on the point $p$ and on the zeros of $a$ at the points B and C as shown in figure (\ref{fig8}). So we write this vector $Z_{p,C,B}$, noticing that the same construction works for $\varphi_{1}$ and $\varphi_{2}$ close to $a$ and $b$ in $\mathcal{H}^{2}(S^{1},M)$ with $p$, $A$ and $B$ the same. Therefore, there exist two neighborhood $U(a)$ and $U(b)$ for which the vector field $Z_{p,C,B}$ is well defined. This constitutes an open cover of the space $\mathcal{H}^{2}(S^{1},M)$ for $a$ and $b$. \\
Now, since $\mathcal{H}^{2}(S^{1},M)$ is paracompact, we can extract a refined cover $(U_{i})_{i\in I}$ that is locally finite and an adapted partition of unity $(\psi_{i})_{i\in I}$. We then use the global deformation $$Z=\sum_{i\in I}\psi_{i}Z_{p_{i},C_{i},B_{i}}.$$
Observe that each $Z_{p_{i},C_{i},B_{i}}$ allows us to compensate our combination of deformations decreasing the ``Dirac mass'' from A to C. The deformation from A to C does not depend on $p_{i}$, $C_{i}$, $B_{i}$. Then $Z$ will also ``compensate'' the deformation. To complete this part, we need to show that indeed $Z$ is a vector field that defines a flow (at least locally). For instance if we can show that $Z_{p,C,B}$, is Lipschitz, then the proof is finished.

\begin{lemma}
Consider a vector field $V \in T\Lambda (M)$, such that for $x\in \Lambda (M)$ $$V(x)=\lambda(x)\xi + \mu(x) v + \eta(x) [\xi,v].$$
Then, if the functions $\lambda$, $\mu$, and $\eta$ are Lipschitz then so is $V$.
\end{lemma}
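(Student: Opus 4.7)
The plan is to prove the pointwise Lipschitz estimate in $t\in S^{1}$ and then integrate, after decomposing $V(x_1)(t)-V(x_2)(t)$ into contributions due to the change of the scalar coefficients and contributions due to the change of the evaluation point of the background fields $\xi,v,[\xi,v]$ on $M$. To make the difference $V(x_1)-V(x_2)$ unambiguously defined as a vector-valued object, I would fix an isometric embedding $M\hookrightarrow \mathbb{R}^{N}$ (or equivalently work in a tubular neighborhood of a reference curve), so that both $V(x_1)$ and $V(x_2)$ become $\mathbb{R}^{N}$-valued maps on $S^{1}$ and their difference is an ordinary map to $\mathbb{R}^{N}$.

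At each $t$ I would add and subtract to obtain the six-term splitting
\begin{align*}
V(x_1)(t)-V(x_2)(t)
&= \bigl(\lambda(x_1)-\lambda(x_2)\bigr)\xi(x_1(t)) + \lambda(x_2)\bigl(\xi(x_1(t))-\xi(x_2(t))\bigr)\\
&\quad + \bigl(\mu(x_1)-\mu(x_2)\bigr)v(x_1(t)) + \mu(x_2)\bigl(v(x_1(t))-v(x_2(t))\bigr)\\
&\quad + \bigl(\eta(x_1)-\eta(x_2)\bigr)[\xi,v](x_1(t)) + \eta(x_2)\bigl([\xi,v](x_1(t))-[\xi,v](x_2(t))\bigr).
\end{align*}
The hypothesis gives $|\lambda(x_1)-\lambda(x_2)|\le L\|x_1-x_2\|_{\mathcal{H}^{2}}$ and analogously for $\mu$ and $\eta$, while on a bounded neighborhood $|\lambda(x_2)|,|\mu(x_2)|,|\eta(x_2)|\le C$. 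Compactness of $M$ makes $\xi,v,[\xi,v]$ globally Lipschitz in the embedding, so
\begin{equation*}
|\xi(x_1(t))-\xi(x_2(t))|\le C\,|x_1(t)-x_2(t)|\le C\,\|x_1-x_2\|_{L^{\infty}}\le C'\,\|x_1-x_2\|_{\mathcal{H}^{2}},
\end{equation*}
via the Sobolev embedding $\mathcal{H}^{2}(S^{1})\hookrightarrow L^{\infty}(S^{1})$, and the same holds for $v$ and $[\xi,v]$. Summing the six terms gives a pointwise bound on $|V(x_1)(t)-V(x_2)(t)|$, which integrated over $S^{1}$ yields $\|V(x_1)-V(x_2)\|_{L^{2}}\le K\|x_1-x_2\|_{\mathcal{H}^{2}}$ on a bounded neighborhood.

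The only non-trivial bookkeeping obstacle is upgrading this estimate to the norm actually used on $T\Lambda(M)$ earlier in the paper, namely $\mathcal{H}^{1}$ along the curve, which requires differentiating the six-term splitting once in $t$. This introduces derivatives $\dot\lambda(x_i),\dot\mu(x_i),\dot\eta(x_i)$ (controlled by the Lipschitz hypothesis on the coefficients viewed as maps into $\mathcal{H}^{1}(S^{1})$) and derivatives of the type $\tfrac{d}{dt}\xi(x_i(t))=D\xi(x_i(t))\,\dot x_i(t)$ of the basis fields along the curve (controlled by the smoothness of $D\xi,Dv,D[\xi,v]$ on the compact $M$ together with $\dot x_i\in \mathcal{H}^{1}(S^{1})$, whence the need for the $\mathcal{H}^{2}$ topology on curves). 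After this routine but tedious differentiation, the same telescoping argument delivers $\|V(x_1)-V(x_2)\|_{\mathcal{H}^{1}}\le K'\|x_1-x_2\|_{\mathcal{H}^{2}}$ on a fixed neighborhood, which is precisely the Lipschitz regularity needed to apply the Cauchy--Lipschitz theorem in the Hilbert manifold setting and to conclude local existence of the flow generated by $Z$ constructed in Section~3.
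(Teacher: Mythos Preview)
Your argument is correct. The paper's own proof is very terse: it fixes a base curve $x$, uses the Riemannian exponential map to identify a neighborhood of $x$ in $\Lambda(M)$ with a neighborhood of the zero section in the fixed linear space $x^{*}TM$ (hence with curves in $\mathcal{H}^{2}_{\mathrm{loc}}(S^{1},\mathbb{R}^{3})$), and then simply asserts that in these local coordinates the regularity of $V$ is inherited from that of the coefficients. Your approach replaces the exponential-map chart by a global isometric embedding $M\hookrightarrow\mathbb{R}^{N}$; this is an equivalent linearization device, and once it is in place your six-term telescoping is exactly the computation the paper leaves implicit.

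What your version buys is explicitness: you actually write down the splitting into ``coefficient variation'' and ``base-point variation'' terms, invoke the Sobolev embedding $\mathcal{H}^{2}(S^{1})\hookrightarrow L^{\infty}(S^{1})$, and spell out why one must differentiate once in $t$ to reach the $\mathcal{H}^{1}$ norm used on $T\Lambda(M)$. The paper's chart-based formulation, on the other hand, makes it slightly cleaner to interpret ``Lipschitz'' intrinsically on the Hilbert manifold $\Lambda(M)$ without reference to an ambient $\mathbb{R}^{N}$. Either route is adequate for the intended application (local existence of the flow of $Z$ via Cauchy--Lipschitz).
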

\begin{proof}

\noindent
Let us fix $x\in \Lambda (M)$, then there exists a neighborhood $U_{x}$ of $x$ in $\Lambda (M)$ such
that for every $\tilde{x}\in U_{x}$, there exists $h \in x^{*}TM$ such that $\tilde{x}(t) = exp_{x(t)}(h(t))$. Hence, this brings the study to curves in $\mathcal{H}^{2}_{loc}(S^1,\mathbb{R}^3)$.\\
The vector field $V$ in this case can be seen as acting on $h$, since $V (\tilde{x})(t) = V (exp_{x(t)}(h(t))$. This yields the regularity of $V$, given the regularity of the coefficients.
\end{proof}

\noindent
We consider now the vector field $Z_{p,C,B}$ constructed on a given curve $x$. This vector contains mainly two parts. The first one is obtained by transporting $-v$, and it depends smoothly on the curve since it depends on the transport equation of the curve. The second part is the one obtained by solving a differential system of the form $\dot{Z}=AZ+H$. If we show that the component of the solution $Z$ has Lipschitz dependence on the curve, then combined with the previous lemma, this proves the result.

\begin{lemma}
The resolvent $R$ of the system satisfying $\dot{R}=AR$, as function of the curve, is Lipschitz.
\end{lemma}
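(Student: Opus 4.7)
The plan is to read off the resolvent's dependence on the curve from the variation-of-constants formula for a perturbed linear ODE, so the whole question reduces to showing the coefficient matrix $A$ depends Lipschitz continuously on the curve $x$ in the $\mathcal{H}^2$ topology.

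First I would fix two curves $x_1,x_2$ in a small $\mathcal{H}^2$-ball around a given $x\in \tilde K\cap \mathcal{C}_\beta^+$, write $\dot x_i=a_i\xi+b_iv$, and form the associated matrices $A_i$ with entries built from $a_i$, $b_i$, $\tau(x_i)$, $\overline{\mu}(x_i)$, $\overline{\mu}_\xi(x_i)$. Since $M$ is compact and $\tau,\overline{\mu},\overline{\mu}_\xi$ are smooth, and since in dimension one $\mathcal{H}^2(S^1)\hookrightarrow C^1(S^1)$, the map $x\mapsto A$ is Lipschitz from $\mathcal{H}^2$ to $L^\infty$: one gets
\[
\|A_1-A_2\|_{L^\infty}\le C\bigl(\|a_1-a_2\|_{L^\infty}+\|b_1-b_2\|_{L^\infty}+\|x_1-x_2\|_{C^0}\bigr)\le C'\,\|x_1-x_2\|_{\mathcal{H}^2},
\]
with $C'$ depending only on a uniform bound on $\|\dot x_i\|_{\mathcal{H}^1}$ and on $C^1$-norms of $\tau,\overline{\mu},\overline{\mu}_\xi$ on $M$.

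Next I would compare $R_1$ and $R_2$. They solve $\dot R_i=A_iR_i$ with $R_i(0)=\mathrm{id}$, so $\Delta:=R_1-R_2$ satisfies
\[
\dot\Delta=A_1\Delta+(A_1-A_2)R_2,\qquad \Delta(0)=0.
\]
The variation-of-constants formula gives
\[
\Delta(t)=\int_0^t R_1(t)R_1(s)^{-1}(A_1(s)-A_2(s))R_2(s)\,ds.
\]
A direct application of Gronwall to the resolvent equation yields $\|R_i(t)\|,\|R_i(t)^{-1}\|\le e^{\int_0^1\|A_i\|}$, uniformly bounded on the $\mathcal{H}^2$-ball by the first step. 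Combining with the previous display produces
\[
\sup_{t\in[0,1]}\|R_1(t)-R_2(t)\|\le K\,\|A_1-A_2\|_{L^\infty}\le KC'\,\|x_1-x_2\|_{\mathcal{H}^2},
\]
which is the Lipschitz bound we want.

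The only real obstacle is the Lipschitz dependence of $A$ on the curve, and it is mild: it boils down to the Sobolev embedding $\mathcal{H}^2(S^1)\hookrightarrow C^1(S^1)$ together with the smoothness of $\tau,\overline{\mu},\overline{\mu}_\xi$ on the compact manifold $M$, plus the fact that $a,b$ are the $\xi$- and $v$-components of $\dot x$ in a smoothly varying frame. Once this is in hand, combining the present lemma with Lemma 4.1 shows that $Z_{p,C,B}$ is Lipschitz in $x$, so the Cauchy–Lipschitz theorem yields the local flow on $\mathcal{C}_\beta^+$ and completes the extension argument of this appendix.
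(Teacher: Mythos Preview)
Your proof is correct and follows essentially the same approach as the paper: the paper's entire argument is the variation-of-constants identity
\[
R(\tilde{x})(t)=R(x)(t)+R(x)(t)\int_{0}^{t}R(x)^{-1}(s)\bigl(A(x)-A(\tilde{x})\bigr)(s)\,R(\tilde{x})(s)\,ds,
\]
which is exactly your formula for $\Delta$, and the rest is declared a ``computational consequence''. You have simply supplied the computations the paper omits---the Lipschitz dependence of $A$ on $x$ via the $\mathcal{H}^2\hookrightarrow C^1$ embedding and the Gronwall bounds on $R_i$, $R_i^{-1}$---so nothing further is needed.
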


\begin{proof}
The proof is a computational consequence of the formula $$R(\tilde{x})(t)=R(x)(t)+R(x)(t)\int_{0}^{t}R(x)^{-1}(s)(A(x)-A(\tilde{x}))(s)R(\tilde{x})(s)ds.$$
\end{proof}

\noindent
Let us consider now for $x_{0} \in \mathcal{C}_{\beta}^{+}$ the solution to the flow generated by $Z$. That is
$$\left\{ \begin{array}{ll}
\displaystyle\frac{\partial x}{\partial s} =Z(x)\\
\\
x(0)=x_{0},
\end{array}
\right.
$$

\noindent
So for $0<s<s_{0}$, $x(s)$ will be in a certain neighborhood $U$ of $x_{0}$. Hence $Z(x(s))=\sum_{i=1}^{n}\psi_{i}Z_{p_{i},C_{i},B_{i}}$. Thus we have
$$\frac{\partial b}{\partial s}=\sum_{i=1}^{n}\psi_{i} (x(s))(h_{\delta,i}+h_{\delta',i})$$
So we have
$$|b(0,t)-b(s,t)|\leq s (\delta+\delta')$$
Now adapted to the opening $\varepsilon$ and the length $l$ of the nearly Dirac mass we have
$$|b(0,t)-b(s,t)|\leq C \varepsilon l(\delta+\delta')$$

\section*{Appendix B. Perturbation of the periodic orbits of $v$}

\noindent
Let us consider now the periodic orbits of $v$, if there is any. We want to perturb them using the flow of a vector field $Z$ so that they have a part with $a\not = 0$. Let us recall that the variation of $a$ along a vector field $Z=\lambda\xi + \mu v + \eta [\xi,v]$ is given by
$$\frac{\partial a}{\partial s}=\dot{\lambda}-b \eta$$
Hence we want to solve the following system:
$$\left\{ \begin{array}{ll}
\dot{\lambda}-b\eta=h\\
\dot{\eta}=\overline{\mu}b\eta -\lambda b
\end{array}
\right.
$$
For a certain $h\geq 0$. This can be written as
$$\dot{X}=bAX+H$$
where
$$X=\left[ \begin{array}{cc}
\lambda\\
\eta
\end{array}
\right], \quad
A=\left[ \begin{array}{cc}
0 & 1\\
\overline{\mu} & -1
\end{array}
\right], \quad
H=\left[ \begin{array}{cc}
h\\
0
\end{array}
\right]$$
We take a point $p=x(0)$ where $b \not =0$ then it is easy to see that for $t_{0}$ small enough $R(t_{0})-id$ is invertible, where $R(t)$ is the resolvent of the system. This follows from the fact that $R(t)-id=tA(0)+o(t)$, hence $det(R(t)-id)=-bt^{2}+o(t^{2})$.\\

\begin{center}
\includegraphics[scale=.4]{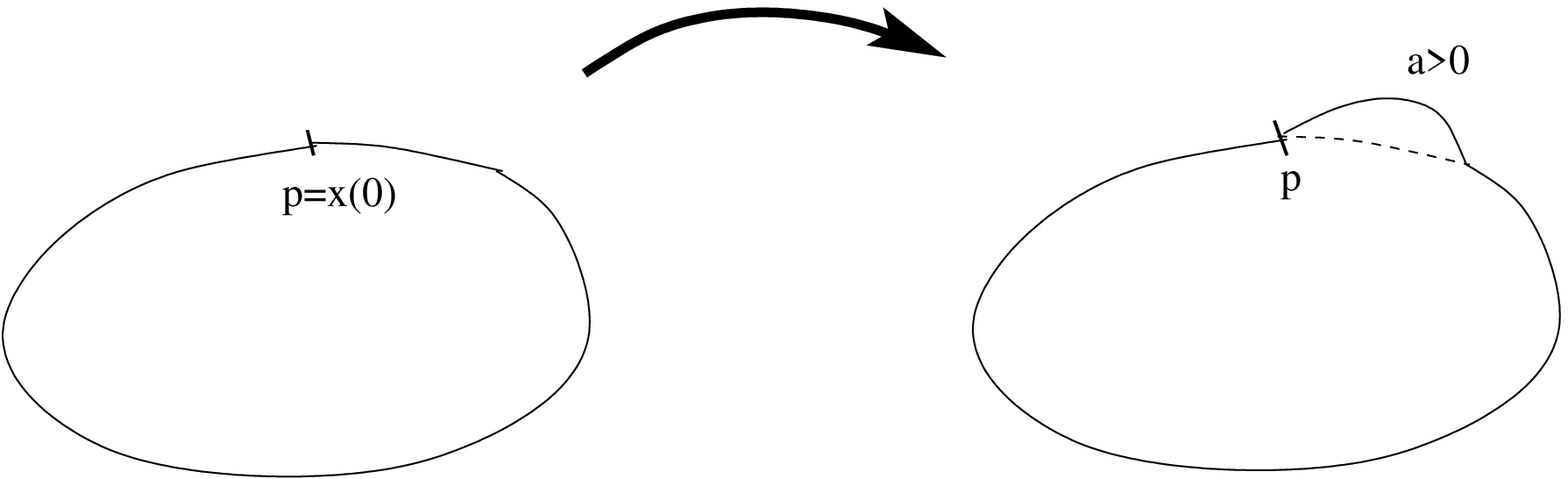}
\end{center}

\noindent
So we take $h$ to be supported in the interval $[0,t_{0}]$. It is important to notice that $Z$ depends on $p$ and $t_{0}$ hence we can write it as $Z_{p,t_{0}}$. Now we need to extend this deformation globally. In a similar way as before we can take an adapted partition of unity $(U_{i}, \psi_{i})$ to the periodic orbits of $v$, so that the vector field $Z$ is globally defined by $Z=\sum _{i} \psi_{i}Z_{p_{i},t_{0,i}}$.

\end{document}